\def\@begintheorem#1#2{\par\bgroup{\sc #1 \ #2. }  \it \\\ignorespace }
\def\@opargbegintheorem#1#2#3{\par\bgroup{\sc #1\ #2 \ (#3).}  \it  \ignorespace}
\def\@endtheorem{\egroup}
\theoremstyle{plain}
\newtheorem{theorem}{Theorem}[section]
\theoremstyle{definition}
\newtheorem{definition}[theorem]{Definition}
\theoremstyle{remark}
\newtheorem{remark}[theorem]{Remark}
\theoremstyle{plain}
\newtheorem{lemma}[theorem]{Lemma}
\theoremstyle{plain}
\newtheorem{corollary}[theorem]{Corollary}
\theoremstyle{plain}
\newtheorem{proposition}[theorem]{Proposition}
\numberwithin{equation}{section}
\newcommand{\C}{{\mathbb C}}
\newcommand{\N}{{\mathbb N}}
\newcommand{\R}{{\mathbb R}}
\newcommand{\ve}{\mathbf{v}_{\!\epsilon}}
\newcommand{\ue}{\mathbf{u}_\epsilon}
\newcommand{\V}{\mathbf{V}}
\newcommand{\Vep}{\mathbf{V}_{\!\epsilon}}
\renewcommand{\S}{{\Sigma}}
\newcommand{\K}{\mathcal{K}}
\newcommand{\Rt}{{\R}^3}
\newcommand{\HH}{\mathcal{H}}
\newcommand{\LL}{\mathcal{L}}
\newcommand{\de}{\partial}
\newcommand{\supp}{{\rm supp}}
\newcommand{\dt}{{\rm dist}}
\newcommand{\diam}{{\rm diam}}
\newcommand{\witb}{\widetilde{B}}
\newcommand{\witm}{\widetilde{M}}
\newcommand{\wita}{\widetilde{A}}
\newcommand{\witt}{\widetilde{T}}
\lbrace\begin{array}{@{}l@{}}}%
\newcommand{\sgn}{\mathop{\textrm{sign}}}
\DeclarePairedDelimiter{\seq}{\lbrace}{\rbrace}
\title[Klein's Paradox and the Relativistic $\delta$-shell Interaction in $\Rt$
]{Klein's Paradox and\\the Relativistic $\delta$-shell Interaction in $\Rt$
%\\
%$\newline$
%Le paradoxe de Klein et \\l'interaction $\delta$-relativiste support\'e sur une coque dans $\Rt$
}
\date{\today}
\author[A. Mas, F. Pizzichillo]{Albert Mas and Fabio Pizzichillo}
\subjclass[2010]{Primary 81Q10; Secondary 35Q40,  42B20, 42B25.}
\keywords{Dirac operator, Klein's Paradox, $\delta$-shell interaction, singular integral operator, approximation by scaled regular potentials, strong resolvent convergence}
\address{A. Mas. Departament de Matem\`atiques,
ETSEIB, Universitat Polit\`ecnica de Catalunya, Avda. Diagonal 647, 08028 Barcelona (Spain)}
\email{amasblesa@gmail.com}
\address{F. Pizzichillo, BCAM - Basque Center for Applied Mathematics,
Alameda de Mazarredo 14, 48009 Bilbao (Spain)}
\email{fpizzichillo@bcamath.org}
\begin{document}
\begin{abstract}
Under certain hypothesis of smallness of the regular potential $\V$, we prove that the Dirac operator in $\Rt$ coupled with a suitable rescaling of  $\V$ converges in the strong resolvent sense to the Hamiltonian coupled with a $\delta$-shell potential supported on $\Sigma$, a bounded $C^2$ surface. Nevertheless, the coupling constant depends non-linearly on the potential $\V$: the Klein's Paradox comes into play.\\

%\noindent{\scshape R\'esum\'e}. Sous une certaine hypoth\`ese de petitesse du potentiel r\'egulier $\V$, nous d\'emontrons que l'op\'erateur de Dirac dans $\Rt$, coupl\'e \`a une mise \`a l'\'echelle convenable de $\V$, converge au sens de la r\'esolvante forte vers le Hamiltonien coupl\'e \`a un potentiel $\delta$ support\'e sur une coque $\Sigma$, une surface $C^2$ born\'ee. N\'eanmoins, la constante de couplage d\'epend non-lin\'eairement du potentiel $\V$: le Paradoxe de Klein intervient.

\end{abstract}
\maketitle
%\thefootnote

\section{Introduction}
The ``Klein's Paradox'' is a counter-intuitive relativistic phenomenon related to scattering theory for high-barrier (or equivalently low-well) potentials for the Dirac equation. When an electron is approaching to a barrier, its wave function can be split in two parts: the reflected one and the transmitted one. In a non-relativistic situation, it is well known that the transmitted wave-function decays exponentially depending on the high of the potential, see \cite{thaller2005advanced} and the references therein. In the case of the Dirac equation it has been observed, in \cite{klein1929reflexion} for the first time, that the transmitted wave-function depends weakly on the power of the barrier, and it becomes almost transparent for very high barriers. This means that outside the barrier the wave-function behaves like an electronic solution and inside the barrier it behaves like a positronic one, violating the principle of the conservation of the charge. This incongruence comes from the fact that, in the Dirac equation, the behaviour of electrons and positrons is described by different components of the same spinor wave-function, see \cite{katsnelson2006chiral}. Roughly speaking, this contradiction derives from the fact that even if a very high barrier is reflective for electrons, it is attractive for the positrons.

From a mathematical perspective, the problem appears when approximating the Dirac operator coupled with a $\delta$-shell potential by  the corresponding operator using local potentials with shrinking support.
The idea of coupling Hamiltonians with singular potentials supported on subsets of lower dimension with respect to the ambient space (commonly called {\em singular perturbations}) is quite classic in quantum mechanics. One important example is the model of a particle in a one-dimensional lattice that analyses the evolution of an electron on a straight line perturbed by a potential caused by ions in the periodic structure of the crystal that create an electromagnetic field. In 1931, Kronig and Penney \cite{kronig1931quantum} idealized this system: in their model the electron is free to move in regions of the whole space separated by some periodical barriers which are zero everywhere except at a single point, where they take infinite value. In a modern language, this corresponds to a $\delta$-point potential.  For the Shr\"oedinger operator, this problem is described in the manuscript \cite{albeverio2012solvable} for finite and infinite $\delta$-point interactions and in \cite{exner2007leaky} for singular potentials supported on hypersurfaces. The reader may look at \cite{ditexnseb, amv1, amv2} and the references therein for the case of the Dirac operator, and to \cite{posilicano1} for a much more general scenario.

Nevertheless, one has to keep in mind that, even if this kind of model is easier to be mathematically understood, since the analysis can be reduced to an algebraic problem, it is and ideal model that cannot be physically reproduced. This is the reason why it is interesting to approximate this kind of operators by more regular ones. For instance, in one dimension, if $V\in C^\infty_c(\R)$ then 
\begin{equation}
V_\epsilon(t):=\textstyle{\frac{1}{\epsilon}\,V\big(\frac{t}{\epsilon}\big)
\to(\int V)}\delta_0\quad\text{when }\epsilon\to0
\end{equation}
in the sense of distributions, where $\delta_0$ denotes the Dirac measure at the origin. 
In \cite{albeverio2012solvable} it is proved that  
$\Delta+V_\epsilon\to\Delta+(\int V)\delta_0$ in the norm resolvent sense when $\epsilon\to0$, and in \cite{approximation} this result is generalized to higher dimensions for singular perturbations on general smooth hypersurfaces.

These kind of results do not hold for the Dirac operator. In fact, in \cite{sebaklein} it is proved that, in the $1$-dimensional case,  the convergence holds in the norm resolvent sense but the coupling constant does depend non-linearly on the potential $V$, unlike in the case of Schr\"oedinger operators. This non-linear phenomenon, which may also occur in higher dimensions, is a consequence of the fact that, in a sense, the free Dirac operator is critical with respect to the set where the $\delta$-shell interaction is performed, unlike the Laplacian (the Dirac/Laplace operator is a first/second order differential operator, respectively, and the set where the interaction is performed has codimension $1$ with respect to the ambient space).
The present paper is devoted to the study of the $3$-dimensional case, where we investigate if it is possible obtain the same results as in one dimension. We advance that, for $\delta$-shell interactions on bounded smooth hypersurfaces, we get the same non-linear phenomenon on the coupling constant but we are only able to show convergence in the strong resolvent sense.

Given $m\geq0$, the free Dirac operator in $\R^3$ is defined by 
\begin{equation}
H:=-i\alpha\cdot\nabla+m\beta, 
\end{equation}
where $\alpha=(\alpha_1,\alpha_2,\alpha_3)$,
\begin{equation}
\alpha_j=\left(\begin{array}{cc}
0& {\sigma}_j\\
{\sigma}_j&0
\end{array}\right)\quad\text{for }j=1,2,3,\quad \beta=\left(\begin{array}{cc}
\mathbb{I}_2&0\\
0&-\mathbb{I}_2
\end{array}\right),\quad \mathbb{I}_2:=\left(
\begin{array}{cc}
1 & 0\\
0 & 1
\end{array}\right),
\end{equation}
\begin{equation}\label{paulimatrices}
\text{and}\quad{\sigma}_1 =\left(
\begin{array}{cc}
0 & 1\\
1 & 0
\end{array}\right),\quad {\sigma}_2=\left(
\begin{array}{cc}
0 & -i\\
i & 0
\end{array}
\right),\quad{\sigma}_3=\left(
\begin{array}{cc}
1 & 0\\
0 & -1
\end{array}\right)
\end{equation}
is the family of \textit{Pauli's matrices}. It is well known that $H$ is self-adjoint on the Sobolev space $H^1(\Rt)^4=:D(H)$, see \cite[Theorem 1.1]{thaller}. Throughout this article we assume that $m>0$.

In the sequel $\Omega\subset\Rt$ denotes a bounded $C^2$ domain and $\Sigma:=\partial\Omega$ denotes its boundary. By a $C^2$ domain we mean the following: for each point $Q\in\S$ there exist
a ball $B\subset\R^3$ centered at $Q$, a $C^2$ function 
$\psi:\R^{2}\to\R$ and a coordinate system $\{(x,x_3):\,x\in\R^{2},\,x_3\in\R\}$ so that, with respect to this coordinate system, $Q=(0,0)$ and
\begin{equation}
\begin{split}
B\cap\Omega=B\cap\{(x,x_3):\,x_3>\psi(x)\},\\
B\cap\S=B\cap\{(x,x_3):\,x_3=\psi(x)\}.
\end{split}
\end{equation}
By compactness, one can find a finite covering of $\S$ made of such coordinate systems, thus the Lipschitz constant of those $\psi$ can be taken uniformly bounded on $\S$.

Set
$\Omega_\epsilon:=\{x \in\Rt : \,d(x,\Sigma)<{\epsilon}\}$ 
for $\epsilon>0$. Following \cite[Appendix B]{approximation}, there exists $\eta>0$ small enough depending on $\S$ so that for every $0<\epsilon\leq\eta$ one can parametrize $\Omega_\epsilon$ as
\begin{equation}\label{C^2 domain properties}
\Omega_\epsilon =\{x_\Sigma+t \nu (x_\Sigma): \,x_\Sigma\in \Sigma,\,t\in(-\epsilon,\epsilon)\},
\end{equation}
where $\nu (x_\Sigma)$ denotes the outward (with respect to $\Omega$) unit normal vector field on $\Sigma$ evaluated at $x_\Sigma$. This parametrization is a bijective correspondence between $\Omega_\epsilon$ and $\S\times(-\epsilon,\epsilon)$, it can be understood as {\em tangential} and {\em normal coordinates}. For $t\in\left[-\eta,\eta\right]$, we set
\begin{equation}\label{C^2 domain properties2}
\Sigma_t:=\{x_\Sigma+t \nu (x_\Sigma): \,x_\Sigma\in \Sigma\}.
\end{equation}
In particular, $\Sigma_t=\partial\Omega_t\setminus\Omega$ if $t>0$, $\Sigma_t=\partial\Omega_{|t|}\cap\Omega$ if $t<0$ and $\Sigma_0=\Sigma$. Let $\upsigma_t$ denote the surface measure on $\Sigma_t$ and, for simplicity of notation, we set $\upsigma:=\upsigma_0$, the surface measure on $\S$.

Given $V\in L^\infty(\R)$ with $\supp V\subset[-\eta,\eta]$ and $0<\epsilon\leq\eta$ define
\begin{equation}
V_\epsilon(t):=\frac{\eta}{\epsilon}\,V\Big(\frac{\eta t}{\epsilon}\Big)
\end{equation}
and, for $x\in\Rt$,
\begin{equation}\label{def bigV}
\Vep(x):=
\begin{cases}
V_\epsilon (t) & \mbox{if } x\in\Omega_\epsilon,\text{ where }x=x_\Sigma+t\nu (x_\Sigma)\text{ for a unique }(x_\Sigma,t)\in\Sigma\times(-\epsilon,\epsilon),\\
0 & \mbox{if } x\not\in\Omega_\epsilon.
\end{cases}
\end{equation}
Finally, set 
\begin{equation}\label{eq u,v}
\begin{split}
\ue:=|\Vep|^{1/2},&\quad
\ve:=\sgn(\Vep)|\Vep|^{1/2},\\
u(t):=|\eta V (\eta t)|^{1/2},&\quad v(t):=\sgn(V(\eta t))u(t).
\end{split}
\end{equation}
Note that $\ue,\ve\in L^\infty(\Rt)$ are supported in $\overline{\Omega_\epsilon}$ and $u,v\in L^\infty(\R)$ are supported in $[-1,1]$.

\begin{definition}\label{deltasmall}
Given $\eta,\,\delta>0$, we say that $V\in L^\infty(\R)$ is $(\delta,\eta)$-small if 
\begin{equation}
\supp V\subset[-\eta,\eta]\quad\text{and}\quad\|V\|_{L^\infty(\R)}\leq\frac{\delta}{\eta}.
\end{equation}
\end{definition}
Observe that if $V$ is $(\delta,\eta)$-small then
$\|V\|_{L^1(\R)}\leq2\delta$, this is the reason why we call it a ``small'' potential. 

In this article we study the asymptotic behaviour, in a strong resolvent sense, of the couplings of the free Dirac operator with electrostatic and Lorentz scalar short-range potentials of the form 
\begin{equation}\label{correc1}
H+\mathbf{V}_{\!\epsilon}\qquad\text{and}\qquad 
H+\beta \mathbf{V}_{\!\epsilon},
\end{equation}
respectively, where ${V}_{\!\epsilon}$ is given by \eqref{def bigV} for some $(\delta,\eta)$-small $V$ with $\delta$ and $\eta$ small enough  only depending on $\S$.
By \cite[Theorem 4.2]{thaller}, both couplings in \eqref{correc1} are self-adjoint operators on $H^1(\Rt)^4$. 
Given $\eta>0$ small enough so that \eqref{C^2 domain properties} holds, and given $u$ and $v$ as in \eqref{eq u,v} for some $V\in{L^\infty(\R)}$ with $\supp V\subset[-\eta,\eta]$, set
\begin{equation}\label{correc3}
\K_V f(t):=\frac{i}{2}\int_\R u(t)\sgn(t-s)v(s)f(s)\,ds
\quad\text{ for $f\in L^1_{loc}(\R)$}.
\end{equation}
The main result in this article reads as follows.
\begin{theorem}\label{Main theorem}
There exist $\eta_0,\,\delta>0$ small enough only depending on $\S$ such that, for any $0<\eta\leq\eta_0$ and $(\delta,\eta)$-small $V$,
\begin{align}\label{main eq}
&H+\mathbf{V}_{\!\epsilon}\to H+\lambda_e\delta_\Sigma\quad\text{in the strong resolvent sense when $\epsilon\to0$},\\\label{main eq 2}
&H+\beta\mathbf{V}_{\!\epsilon}\to H+\lambda_s\beta\,\delta_\Sigma\quad\text{in the strong resolvent sense when $\epsilon\to0$},
\end{align} 
where 
\begin{align}\label{def lambda elec}
\lambda_e &:=\textstyle{\int_\R}v(t)\,((1-\K_V^2)^{-1}u)(t)\,dt\in\R, \\\label{def lambda scalar}
\lambda_s&:=\textstyle{\int_\R}v(t)\,((1+\K_V^2)^{-1}u)(t)\,dt\in\R
\end{align}
and $H+\lambda_e\delta_\Sigma$ and $H+\lambda_s\beta\,\delta_\Sigma$ are the electrostatic and Lorentz scalar shell interactions given by \eqref{eq defi electro} and \eqref{eq defi scalar}, respectively.  
\end{theorem}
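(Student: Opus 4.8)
\emph{Strategy.} The plan is to fix one spectral parameter $z_0\in\C\setminus\R$ (e.g.\ $z_0=i$, so that the integral kernel $\phi^{z_0}$ of $(H-z_0)^{-1}$ decays exponentially) and to prove strong convergence of the resolvents \emph{only at $z_0$}: since all the operators in play are self-adjoint and the resolvents are bounded by $1/|\Im z_0|$, strong resolvent convergence at a single non-real point propagates to $\C\setminus\R$. Writing $\Vep=\ve\ue$ (and $\beta\Vep=\beta\ve\,\ue$ in the scalar case), the Birman--Schwinger resolvent formula reads
\[
(H+\Vep-z_0)^{-1}=(H-z_0)^{-1}-(H-z_0)^{-1}\ve\,\big(I+\ue(H-z_0)^{-1}\ve\big)^{-1}\ue(H-z_0)^{-1},
\]
valid as soon as $I+\ue(H-z_0)^{-1}\ve$ is invertible on $L^2(\Rt)^4$. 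The first move is to transplant everything onto the fixed space $L^2(\Sigma\times(-1,1))^4$: using the tangential/normal coordinates \eqref{C^2 domain properties} together with the rescaling $t=\epsilon\tau$, one gets a unitary $U_\epsilon\colon L^2(\Sigma\times(-1,1))^4\to L^2(\Omega_\epsilon)^4$ under which $\ue,\ve$ turn into the multipliers $\epsilon^{-1/2}u(\tau),\epsilon^{-1/2}v(\tau)$; the powers of $\epsilon$ coming from the multipliers and from $dx=\epsilon J_\epsilon(x_\Sigma,\tau)\,d\upsigma\,d\tau$ ($J_\epsilon\to1$ uniformly) cancel exactly — this cancellation is the analytic face of the ``criticality''. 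It then suffices to pass to the limit in the three conjugated factors $(H-z_0)^{-1}\ve\,U_\epsilon$, the Birman--Schwinger operator $B_\epsilon(z_0):=U_\epsilon^*\ue(H-z_0)^{-1}\ve\,U_\epsilon$, and $U_\epsilon^*\ue(H-z_0)^{-1}$ (and the analogues with $\beta$).

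\emph{The three limits.} For the two outer factors a dominated-convergence argument (based on $|\phi^{z_0}(w)|\lesssim|w|^{-2}e^{-c|w|}$) gives, for each fixed $h$ and $f$,
\[
(H-z_0)^{-1}\ve\,U_\epsilon h\ \to\ \Phi^{z_0}T_V h\ \text{ in }L^2(\Rt)^4,\qquad U_\epsilon^*\ue(H-z_0)^{-1}f\ \to\ S_V(\Phi^{\bar z_0})^*f,
\]
where $T_V h(x_\Sigma)=\int_{-1}^1 v(\tau)h(x_\Sigma,\tau)\,d\tau$, $S_V g(x_\Sigma,\tau)=u(\tau)g(x_\Sigma)$ and $\Phi^{z_0}g=\phi^{z_0}*(g\,\upsigma)$ is the single-layer operator on $\Sigma$. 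The heart is the middle factor: one shows $B_\epsilon(z_0)\to \Lambda+S_V\,C^{z_0}_\Sigma\,T_V$ strongly, where $C^{z_0}_\Sigma$ is the bounded Cauchy--Dirac singular integral operator on $L^2(\Sigma)^4$ from \cite{amv1,amv2} and $\Lambda$ acts \emph{fibrewise over $\Sigma$} as $(\alpha\cdot\nu(x_\Sigma))\otimes\K_V$ on $L^2((-1,1))^4$. Splitting $\phi^{z_0}(w)=\frac{i}{4\pi}\frac{\alpha\cdot w}{|w|^3}+\phi^{z_0}_{\mathrm{reg}}(w)$ with $\phi^{z_0}_{\mathrm{reg}}(w)=O(|w|^{-1})e^{-c|w|}$, the regular part contributes the weakly singular part of $S_VC^{z_0}_\Sigma T_V$, while the Riesz kernel $\frac{i}{4\pi}\alpha\cdot w/|w|^3$ contributes both the principal-value part of $S_VC^{z_0}_\Sigma T_V$ and the term $\Lambda$: after the $\epsilon$-rescaling it ``zooms in'' on the tangent plane, and the flat tangential integral
\[
\int_{\R^2}\frac{\alpha\cdot(\xi,\tau_x-\tau_y)}{\big(|\xi|^2+(\tau_x-\tau_y)^2\big)^{3/2}}\,d\xi=2\pi\,\sgn(\tau_x-\tau_y)\,\alpha\cdot\nu(x_\Sigma)
\]
produces exactly the kernel $\tfrac{i}{2}u(\tau_x)\big(\alpha\cdot\nu(x_\Sigma)\big)\sgn(\tau_x-\tau_y)v(\tau_y)$, that is, $\K_V$ fibred over $\Sigma$. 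This is where the non-linearity enters: the normal direction is not washed out, it survives as $\K_V$.

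\emph{Main obstacle.} The delicate point is the uniform bound $\sup_{0<\epsilon\le\eta}\|B_\epsilon(z_0)\|\le C(\Sigma)\,\delta$. For $\delta$ small this legitimises the Birman--Schwinger formula for every $\epsilon$, and, since then the Neumann series for $(I+B_\epsilon(z_0))^{-1}$ converges uniformly in $\epsilon$, it upgrades the strong convergence of $B_\epsilon(z_0)$ to $(I+B_\epsilon(z_0))^{-1}\to(I+\Lambda+S_VC^{z_0}_\Sigma T_V)^{-1}$ strongly. The regular part is harmless (Schur test, using $\|u\|_\infty\|v\|_\infty\lesssim\delta$ and $\sup_{x_\Sigma}\int_\Sigma|x_\Sigma-y_\Sigma|^{-1}d\upsigma\lesssim1$, and its diagonal contribution in fact vanishes as $\epsilon\to0$), but the Riesz part carries the borderline kernel $|w|^{-2}$ for which a naive Schur estimate diverges: one must exploit its Calderón--Zygmund cancellation on the $C^2$ surface, \emph{uniformly across the scale $\epsilon$}. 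I would do this by localising near the tangential diagonal and rescaling to a fixed model operator on $L^2(\R^2\times(-1,1))^4$ whose $L^2$-boundedness is classical, absorbing the deviation of $\Sigma$ from its tangent plane into errors controlled by the uniform $C^2$ bounds, in the spirit of \cite{amv1,amv2,approximation}. This uniform estimate — rather than the soft convergence arguments — is the real work.

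\emph{Identifying the limit.} Strong convergence of the three (uniformly bounded) factors yields
\[
(H+\Vep-z_0)^{-1}\ \to\ (H-z_0)^{-1}-\Phi^{z_0}\,T_V\big(I+\Lambda+S_VC^{z_0}_\Sigma T_V\big)^{-1}S_V\,(\Phi^{\bar z_0})^*.
\]
Since $\Lambda$ is fibrewise and $\alpha\cdot\nu(x_\Sigma)$ is an involution with eigenprojections $P_\pm$, the operator $(I+\Lambda)^{-1}$ restricts to $(I\pm\K_V)^{-1}$ on $\mathrm{Ran}\,P_\pm$, so $T_V(I+\Lambda)^{-1}S_V$ acts fibrewise as $\langle v,(I+\K_V)^{-1}u\rangle\,P_+ +\langle v,(I-\K_V)^{-1}u\rangle\,P_-$. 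The key algebraic fact is that $\langle v,\K_V^{\,2k+1}u\rangle=0$ for every $k\ge0$: writing $\K_V=M_u L M_v$ with $Lf(t)=\tfrac i2\int_\R\sgn(t-s)f(s)\,ds$ and $g:=uv$, one has $\langle v,\K_V^{\,n}u\rangle=\langle g,(LM_g)^{\,n-1}L\,g\rangle$, and the palindrome $(LM_g)^{n-1}L$ is skew-symmetric because $L$ is (the kernel $\sgn(t-s)$ is antisymmetric), so the bilinear form $g\mapsto\langle g,\cdot\,g\rangle$ annihilates it when $n$ is odd. Hence $\langle v,(I\pm\K_V)^{-1}u\rangle=\langle v,(I-\K_V^2)^{-1}u\rangle=\lambda_e$, so $T_V(I+\Lambda)^{-1}S_V=\lambda_e I$; pushing $S_V,T_V$ through the Neumann series of $(I+\Lambda+S_VC^{z_0}_\Sigma T_V)^{-1}$ then gives $T_V(I+\Lambda+S_VC^{z_0}_\Sigma T_V)^{-1}S_V=\lambda_e(I+\lambda_eC^{z_0}_\Sigma)^{-1}=(\tfrac1{\lambda_e}+C^{z_0}_\Sigma)^{-1}$ (and $0$ if $\lambda_e=0$), which is exactly the operator in the resolvent of $H+\lambda_e\delta_\Sigma$ from \eqref{eq defi electro}. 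In the Lorentz scalar case one repeats everything with $\beta\ve$ in place of $\ve$, so $(\alpha\cdot\nu)\beta$ — which squares to $-I$ — replaces $\alpha\cdot\nu$; one then gets $(I\pm i\K_V)^{-1}$ on the $\pm i$-eigenspaces, $\langle v,(I\pm i\K_V)^{-1}u\rangle=\langle v,(I+\K_V^2)^{-1}u\rangle=\lambda_s$, and the same manipulation produces $(\tfrac1{\lambda_s}\beta+C^{z_0}_\Sigma)^{-1}$, i.e.\ the resolvent of $H+\lambda_s\beta\,\delta_\Sigma$ from \eqref{eq defi scalar}. Finally, the outer convergence is genuinely only strong — $(H-z_0)^{-1}\ve U_\epsilon$ has range inside $H^1(\Rt)^4$, whereas $\Phi^{z_0}g$ has a non-trivial jump $-i(\alpha\cdot\nu)g$ across $\Sigma$, a fixed-size discrepancy detectable in operator norm — which is why Theorem~\ref{Main theorem} asserts strong, and not norm, resolvent convergence.
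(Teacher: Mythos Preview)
Your proposal is correct and follows essentially the same route as the paper: the Birman--Schwinger formula is rescaled onto $L^2(\Sigma\times(-1,1))^4$, the three factors (the paper's $A_\epsilon,B_\epsilon,C_\epsilon$) are shown to converge strongly, with $B_\epsilon\to B_0(a)+B'$ where your $\Lambda$ is exactly the paper's $B'$; the $(\delta,\eta)$-smallness is used precisely as you describe to force $\sup_\epsilon\|B_\epsilon\|<1$ and hence strong convergence of the inverses; and the algebraic reduction via $\langle v,\K_V^{2k+1}u\rangle=0$ matches the paper's Section~3.1. The only differences are in the technical execution, not the architecture: the paper controls the singular part of $B_\epsilon$ by pointwise domination with the Hardy--Littlewood and maximal Cauchy operators on $\Sigma$ (rather than your tangent-plane model operator), it extracts the jump term $B'$ through a normal/tangential splitting of the Riesz kernel combined with the divergence theorem on shrinking caps (rather than your direct flat integral), and it inverts $1+B'$ and $\beta+B'$ by an explicit $4\times4$ block computation (rather than your eigenprojection argument for $\alpha\cdot\nu$ and $(\alpha\cdot\nu)\beta$, which is arguably cleaner).
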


To define $\lambda_e$ in \eqref{def lambda elec} and $\lambda_s$ in \eqref{def lambda scalar}, the invertibility of $1\pm\K_V^2$ is required. However, since $\K_V$ is a Hilbert-Schmidt operator, we know that 
$\|\K_V\|_{L^2(\R)\to L^2(\R)}$ is controlled by the norm of its kernel in $L^2(\R\times\R)$, which is exactly 
$\|u\|_{L^2(\R)}\|v\|_{L^2(\R)}=\|V\|_{L^1(\R)}\leq 2\delta<1$, assuming that $\delta<1/2$ and that $V$ is $(\delta,\eta)$-small with $\eta\leq\eta_0$. We must stress that the way to construct $\lambda_e$ and $\lambda_s$ is the same as in  the one dimensional case, see \cite[Theorem 1]{sebaklein}.

From \Cref{Main theorem} we deduce that if $a\in\sigma(H+\lambda_e\delta_{\Sigma})$, where $\sigma(\cdot)$ denotes the spectrum, then there exists a sequence $\seq{a_\epsilon}$ such that $a_\epsilon\in \sigma(H+\mathbf{V}_{\!\epsilon})$ and $a_\epsilon\to a$ when $\epsilon\to0$.  Contrary to  what happens if norm resolvent convergence holds, the vice-versa spectral implication may not hold. That is, if $a_\epsilon \to a$ with $a_\epsilon\in \sigma(H+\mathbf{V}_{\!\epsilon})$, it may occur that $a\notin\sigma(H+\lambda_e\delta_{\Sigma})$. The same happens for the Lorentz scalar case.
We should highlight that the kind of instruments we used to prove \Cref{Main theorem} suggest us that the norm resolvent convergence may not hold in general. Nevertheless, if $\Sigma$ is a sphere, the vice-versa spectral implication does hold. That means that, passing to the limit, we don't lose any element of the spectrum for electrostatic and scalar spherical $\delta$-shell interactions, see \cite{sphericalnotes}.

The non-linear behaviour of the limiting coupling constant with respect to the approximating potentials mentioned in the first paragraphs of the introduction is depicted by \eqref{def lambda elec} and \eqref{def lambda scalar}; the reader may compare this to the analogous result \cite[Theorem 1.1]{approximation} in the non-relativistic scenario.
However, unlike in \cite[Theorem 1.1]{approximation}, in Theorem \ref{Main theorem} we demand an smallness assumption on the potential, the $(\delta,\eta)$-smallness from Definition \ref{deltasmall}. 
We use this assumption in Corollary \ref{convergence main} below, where the strong convergence of some inverse operators $(1+B_\epsilon(a))^{-1}$ when $\epsilon\to0$ is shown. The proof of Theorem \ref{Main theorem} follows the strategy of \cite[Theorem 1.1]{approximation}, but dealing with the Dirac operator instead of the Laplacian makes a big difference at this point. In the non-relativistic scenario, the fundamental solution of $-\Delta+a^2$ in $\Rt$ for $a>0$ has exponential decay at infinity and behaves like $1/|x|$ near the origin, which is locally integrable in $\R^2$ and thus its integral tends to zero as we integrate on shrinking balls in $\R^2$ centered at the origin. This facts are used in \cite{approximation} to show that their corresponding $(1+B_\epsilon(a))^{-1}$ can be uniformly bounded in $\epsilon$ just by taking $a$ big enough. In our situation, the fundamental solution of 
$H-a$ in $\Rt$ can still be taken with exponential decay at infinity for $a\in\C\setminus\R$, but it is not locally absolutely integrable in $\R^2$. Actually, its most singular part behaves like $x/|x|^3$ near the origin, and thus it yields a singular integral operator in $\R^2$. This means that the contribution near the origin can not be disesteemed as in \cite{approximation} just by shrinking the domain of integration and taking $a\in\C\setminus\R$ big enough, something else is required. We impose smallness on $V$ to obtain smallness on $B_\epsilon(a)$ and ensure the uniform invertibility of $1+B_\epsilon(a)$ with respect to $\epsilon$; this is the only point where the $(\delta,\eta)$-smallenss is used. 

Let $\eta_0,\,\delta>0$ be as in Theorem \ref{Main theorem}. Take $0<\eta\leq\eta_0$ and $V=\frac{\tau}{2} \chi_{(-\eta,\eta)}$ for some $\tau\in\R$ such that $0<|\tau|\eta\leq2\delta$. Then, arguing as in \cite[Remark 1]{sebaklein}, one gets that  
\begin{equation}
\int_\R\!v\,(1-\K_V^2)^{-1}u=\sum_{n=0}^\infty\int_\R\!v\, \K_V^{2n}u=2\tan\Big(\frac{\tau\eta}{2}\Big).
\end{equation}
Since $V$ is $(\delta,\eta)-$small,
using \eqref{def lambda elec} and \eqref{main eq} we obtain that \begin{equation}H+\mathbf{V}_{\!\epsilon}\to H+2\tan(\textstyle{\frac{\tau\eta}{2}})\delta_\Sigma\quad\text{ in the strong resolvent sense when $\epsilon\to0$,}
\end{equation}
analogously to {\cite[Remark 1]{sebaklein}}.
Similarly, one can check that $\int\!v\,(1+\K_V^2)^{-1}u=2\tanh(\textstyle{\frac{\tau\eta}{2}})$. 
Then, \eqref{def lambda scalar} and \eqref{main eq 2} yield 
\begin{equation}H+\beta\,\mathbf{V}_{\!\epsilon}\to H+2\tanh(\textstyle{\frac{\tau\eta}{2}})\beta\delta_\Sigma\quad\text{ in the strong resolvent sense when $\epsilon\to0$.}
\end{equation} 

Regarding the structure of the paper, Section \ref{s preli} is devoted to the preliminaries, which refer to basic rudiments with a geometric measure theory flavour and spectral properties of the short range and shell interactions appearing in Theorem \ref{Main theorem}. 
In Section \ref{s main deco} we present the first main step to prove Theorem \ref{Main theorem}, a decomposition of the resolvent of the approximating interaction into three concrete operators. This type of decomposition, which is made through a scaling operator, already appears in \cite{approximation, sebaklein}. Section \ref{s main deco} also contains some auxiliary results concerning these three operators, whose proofs are carried out later on, and the proof of Theorem \ref{Main theorem}, see Section \ref{s2 ss1}. Sections \ref{ss C}, \ref{ss B}, \ref{ss A} and \ref{s proof corol}
are devoted to prove all those auxiliary results presented in Section \ref{s main deco}.
\section*{Acknowledgement}
We would like to thank Luis Vega for the enlightening discussions. Both authors were partially supported by the ERC Advanced Grant 669689 HADE (European Research Council).
Mas was also supported by the {\em Juan de la Cierva} program JCI2012-14073 and the project MTM2014-52402 (MINECO, Gobierno de Espa\~na).
Pizzichillo was also supported by the MINECO project MTM2014-53145-P, by the Basque Government through the BERC 2014-2017 program and by the Spanish Ministry of Economy and Competitiveness MINECO: BCAM Severo Ochoa accreditation SEV-2013-0323.

\section{Preliminaries}\label{s preli}
As usual, in the sequel the letter `$C$' (or `$c$') stands
for some constant which may change its value at different
occurrences. We will also make use of constants with subscripts, both to highlight the dependence on some other parameters and to stress that they retain their value from one equation to another. The precise meaning of the subscripts will be clear from the context in each situation.

\subsection{Geometric and measure theoretic considerations}\label{s1 ss1}
\mbox{}

In this section we recall some  geometric and measure theoretic properties of $\Sigma$ and the domains presented in \eqref{C^2 domain properties}. At the end, we provide some growth estimates of the measures associated to the layers introduced in \eqref{C^2 domain properties2}.

The following definition and propositions correspond to Definition 2.2 and Propositions 2.4 and 2.6 in \cite{approximation}, respectively. The reader should look at \cite{approximation} for the details.

\begin{definition}[Weingarten map]\label{defi weingarten}
Let $\Sigma$ be parametrized by the family $\{\varphi_i,U_i,V_i\}_{i\in I}$, that is, $I$ is a finite set, $U_i\subset\R^2$, $V_i\subset\Rt$, $\S\subset\cup_{i\in I}V_i$ and $\varphi_i(U_i)=V_i\cap\S$ for all $i\in I$. For \[x=\varphi_i(u)\in \Sigma\cap V_i\] with $u\in U_i$, $i\in I$, one defines the Weingarten map $W(x): T_x\to T_x$, where $T_x$ denotes the tangent space of $\S$ on $x$, as the linear operator acting on the basis vector $\{\de_j\varphi_i(u)\}_{j=1,2}$ of $T_x$ as
\[
W(x)\de_j\varphi_i(u):=-\de_j\nu(\varphi_i(u)).
\]
\end{definition}
 
\begin{proposition}\label{weingarten map}
The Weingarten map $W(x)$ is symmetric with respect to the inner product induced by the first fundamental form and its eigenvalues are uniformly bounded for all $x\in\Sigma$.
\end{proposition}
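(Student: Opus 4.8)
The plan is to reduce the statement to the local coordinate expression of $W(x)$ and then exploit the $C^2$ regularity of $\S$ together with its compactness.

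For the symmetry, I would fix a chart $\varphi_i$ and a point $x=\varphi_i(u)\in\S\cap V_i$, and recall that the inner product induced by the first fundamental form on $T_x$ is simply the restriction of the Euclidean scalar product of $\Rt$. Since $\nu(\varphi_i(u))$ is a unit vector orthogonal to $T_{\varphi_i(u)}$, we have $\langle\nu(\varphi_i(u)),\de_k\varphi_i(u)\rangle=0$ for $k=1,2$; moreover $|\nu|\equiv 1$ forces $\langle\de_j(\nu\circ\varphi_i)(u),\nu(\varphi_i(u))\rangle=0$, so indeed $W(x)\de_j\varphi_i(u)=-\de_j(\nu\circ\varphi_i)(u)\in T_x$ and $W(x)$ is well defined. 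As $\varphi_i\in C^2$, the map $\nu\circ\varphi_i=\de_1\varphi_i\times\de_2\varphi_i/|\de_1\varphi_i\times\de_2\varphi_i|$ is $C^1$ in $u$, so I may differentiate the orthogonality relation with respect to $u_j$ to obtain
\[
\langle W(x)\de_j\varphi_i(u),\de_k\varphi_i(u)\rangle=-\langle\de_j(\nu\circ\varphi_i)(u),\de_k\varphi_i(u)\rangle=\langle\nu(\varphi_i(u)),\de_j\de_k\varphi_i(u)\rangle .
\]
The right-hand side is symmetric in $j,k$ because the mixed second-order partials of the $C^2$ map $\varphi_i$ commute; since $\{\de_j\varphi_i(u)\}_{j=1,2}$ is a basis of $T_x$, this gives $\langle W(x)X,Y\rangle=\langle X,W(x)Y\rangle$ for all $X,Y\in T_x$, i.e.\ $W(x)$ is self-adjoint for the first fundamental form and, in particular, diagonalizable with real eigenvalues.

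For the uniform bound on the eigenvalues, set $g_{jk}(x):=\langle\de_j\varphi_i(u),\de_k\varphi_i(u)\rangle$ and $\mathrm{II}_{jk}(x):=\langle\nu(\varphi_i(u)),\de_j\de_k\varphi_i(u)\rangle$. Expanding $W(x)\de_j\varphi_i(u)$ in the basis $\{\de_l\varphi_i(u)\}$ and pairing with $\de_k\varphi_i(u)$, the identity above shows that the matrix of $W(x)$ in this basis equals $(g_{jk}(x))^{-1}\,(\mathrm{II}_{jk}(x))$. Since $\S$ is a bounded $C^2$ surface it admits a finite atlas $\{\varphi_i,U_i,V_i\}_{i\in I}$, which we may take so that each $\varphi_i$ extends to a $C^2$ map on a compact set $\overline{U_i}$ (this is exactly the finite covering by graph coordinates discussed after the definition of $C^2$ domain in the Introduction); consequently $g_{jk}$ and $\mathrm{II}_{jk}$ are bounded uniformly on $\S$, and since $(g_{jk})$ is positive definite and depends continuously on $u$ over a compact set, $\det(g_{jk})$ is bounded below by a positive constant, so the entries of $(g_{jk})^{-1}$ are bounded uniformly as well. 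Hence the matrix of $W(x)$ has uniformly bounded entries, and combining this with the uniform ellipticity of $(g_{jk})$ yields a uniform bound on the operator norm of $W(x)$ with respect to the first fundamental form; being self-adjoint, its eigenvalues (the principal curvatures of $\S$) are then real and bounded uniformly in $x\in\S$.

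I do not expect a serious obstacle here: the only delicate point is the \emph{uniformity} over all of $\S$, and that is precisely what the boundedness of $\S$ and the existence of a finite $C^2$ atlas with uniformly controlled data deliver. Concretely, in the graph coordinates $\varphi(x)=(x,\psi(x))$ of the Introduction one has $g_{jk}=\delta_{jk}+\de_j\psi\,\de_k\psi$ and $\mathrm{II}_{jk}=\de_j\de_k\psi/\sqrt{1+|\nabla\psi|^2}$, and all the required bounds are immediate from the uniform bound on the $C^2$ norms of the finitely many defining functions $\psi$. For full details one may consult \cite{approximation}.
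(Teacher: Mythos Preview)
Your argument is correct and is the standard differential-geometric proof: symmetry of $W(x)$ follows from the equality of mixed second partials of the $C^2$ chart (so that $\langle W(x)\de_j\varphi_i,\de_k\varphi_i\rangle=\langle\nu,\de_j\de_k\varphi_i\rangle$ is symmetric in $j,k$), and the uniform eigenvalue bound follows from compactness of $\S$ together with the finite $C^2$ atlas, which gives uniform control on $(g_{jk})$, its inverse, and $(\mathrm{II}_{jk})$.

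There is nothing to compare with in the paper itself: the authors do not prove this proposition but simply refer the reader to \cite{approximation} (it is their Proposition~2.4). Your write-up is therefore more self-contained than what appears in the paper, and your closing reference to \cite{approximation} matches exactly what the paper does.
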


Given $0<\epsilon\leq\eta$ and $\Omega_\epsilon$ as in \eqref{C^2 domain properties}, let $i_\epsilon: \Sigma\times(-\epsilon,\epsilon)\to \Omega_\epsilon$ be the bijection defined by 
\begin{equation}\label{i epsilon}
i_\epsilon(x_\Sigma,t):=x_\Sigma+t \nu (x_\Sigma).
\end{equation}
For future purposes, we also introduce the projection $P_\Sigma:\Omega_\epsilon \to \Sigma$ given by 
\begin{equation}\label{P Sigma}
P_\Sigma (x_\S+t\nu(x_\S)):=x_\S.
\end{equation}

For $1\leq p<+\infty$, let $L^p(\Omega_\epsilon)$ and $L^p(\Sigma\times(-1,1))$ be the Banach spaces endowed with the norms
\begin{equation}\label{eqn:coaeraqqq}
\|f\|_{L^p(\Omega_\epsilon)}^p
:=\int_{\Omega_\epsilon}|f|^p\,d\LL,\qquad
\|f\|_{L^p(\Sigma\times(-1,1))}^p
:=\int_{-1}^1\int_{\Sigma}|f|^p\,d\upsigma\,dt,
\end{equation}
respectively, where $\LL$ denotes the Lebesgue measure in $\R^3$. The Banach spaces corresponding to the endpoint case $p=+\infty$ are defined, as usual, in terms of essential suprema with respect to the measures associated to $\Omega_\epsilon$ and $\Sigma\times(-1,1)$ in \eqref{eqn:coaeraqqq}, respectively.

\begin{proposition}\label{prop:coarea}
If $\eta>0$ is small enough, there exist $0<c_1,c_2<+\infty$ such that
\[
c_1\|f\|_{L^1(\Omega_\epsilon)}\leq\|f \circ i_\epsilon\|_{L^1(\Sigma\times (-\epsilon,\epsilon))}\leq c_2\|f\|_{L^1(\Omega_\epsilon)}\quad\text{for all }f \in L^1(\Omega_\epsilon),\, 0<\epsilon\leq\eta.
\]
Moreover, if $W$ denotes the Weingarten map associated to $\Sigma$ from {\em Definition \ref{defi weingarten}},
\begin{equation}\label{eqn:coaera}
\int_{\Omega_\epsilon}f(x)\,dx=\int_{-\epsilon}^\epsilon\int_{\Sigma} f(x_\Sigma+t\nu(x_\Sigma))\det(1-tW(x_\Sigma))\,d\upsigma(x_\Sigma)\,dt\quad\text{for all }f \in L^1(\Omega_\epsilon).
\end{equation}
\end{proposition}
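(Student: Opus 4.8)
The statement to prove is Proposition \ref{prop:coarea}, which is the coarea-type formula for the tubular neighborhood $\Omega_\epsilon$ together with the equivalence of $L^1$ norms. Let me sketch a proof.

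\medskip

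The plan is to use the parametrization $i_\epsilon:\Sigma\times(-\epsilon,\epsilon)\to\Omega_\epsilon$ from \eqref{i epsilon} together with the classical change-of-variables formula, computing the Jacobian of $i_\epsilon$ in terms of the Weingarten map. First I would localize: by compactness of $\Sigma$, take a finite atlas $\{\varphi_i,U_i,V_i\}_{i\in I}$ as in Definition \ref{defi weingarten} and a subordinate partition of unity, so it suffices to prove \eqref{eqn:coaera} for $f$ supported in a single chart neighborhood $V_i\cap\Omega_\epsilon$. On such a chart, the map $(u,t)\mapsto \varphi_i(u)+t\nu(\varphi_i(u))$ is a $C^1$ diffeomorphism from $U_i\times(-\epsilon,\epsilon)$ onto its image (for $\eta$ small, by the tubular neighborhood theorem / inverse function theorem, using that $\Sigma$ is $C^2$ so $\nu$ is $C^1$). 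Its differential sends $\partial_j\varphi_i(u)\mapsto \partial_j\varphi_i(u)+t\,\partial_j\nu(\varphi_i(u)) = (1-tW(x_\Sigma))\partial_j\varphi_i(u)$ for $j=1,2$ and $\partial_t\mapsto\nu(x_\Sigma)$, where $x_\Sigma=\varphi_i(u)$. Since $\nu$ is a unit vector orthogonal to $T_{x_\Sigma}\Sigma$ and $(1-tW(x_\Sigma))$ maps $T_{x_\Sigma}\Sigma$ to itself, the Jacobian determinant factors as $\det(1-tW(x_\Sigma))$ times the square root of the Gram determinant of $\{\partial_j\varphi_i(u)\}$, the latter being exactly the area element $d\upsigma$. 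Assembling the charts via the partition of unity yields \eqref{eqn:coaera}.

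\medskip

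For the norm equivalence, I would observe that by Proposition \ref{weingarten map} the eigenvalues of $W(x_\Sigma)$ are uniformly bounded, say by $\Lambda$, so for $|t|\le\eta$ with $\eta\Lambda<1$ one has the two-sided bound $0<1-\eta\Lambda \le \det(1-tW(x_\Sigma)) \le (1+\eta\Lambda)^2$ uniformly in $x_\Sigma\in\Sigma$ and $|t|\le\eta$. Plugging these bounds into \eqref{eqn:coaera} applied to $|f|$, and recalling that $\|f\circ i_\epsilon\|_{L^1(\Sigma\times(-\epsilon,\epsilon))}=\int_{-\epsilon}^\epsilon\int_\Sigma |f(x_\Sigma+t\nu(x_\Sigma))|\,d\upsigma\,dt$ by definition \eqref{eqn:coaeraqqq}, gives the claimed inequalities with $c_1=(1-\eta\Lambda)$ and $c_2=(1+\eta\Lambda)^2$ (or their reciprocals, depending on the direction); these constants are independent of $\epsilon\in(0,\eta]$.

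\medskip

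The main obstacle, and the only genuinely non-routine point, is justifying that $i_\epsilon$ is a bijective $C^1$ map with the stated Jacobian for $\eta$ small enough uniformly over $\Sigma$ — i.e.\ that the normal coordinates are well-defined and non-degenerate on the whole tube. This is precisely the content cited from \cite[Appendix B]{approximation} around \eqref{C^2 domain properties}, where the uniform choice of $\eta$ is guaranteed by compactness of $\Sigma$ and the uniform bound on the Weingarten map; given that, the computation of the differential and the factorization of the determinant are standard linear algebra (block structure with respect to the orthogonal splitting $\R^3 = T_{x_\Sigma}\Sigma \oplus \R\nu(x_\Sigma)$). Everything else reduces to the change-of-variables theorem and the uniform bounds on $\det(1-tW)$.
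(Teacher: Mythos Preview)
Your argument is correct and is exactly the standard proof: compute the Jacobian of the normal-coordinate map $(u,t)\mapsto\varphi_i(u)+t\nu(\varphi_i(u))$ via the block structure induced by $T_{x_\Sigma}\Sigma\oplus\R\nu(x_\Sigma)$, identify the tangential block with $1-tW(x_\Sigma)$, and then use the uniform bound on the eigenvalues of $W$ for the norm equivalence. The paper itself does not give a proof of this proposition; it simply refers the reader to \cite{approximation} (Propositions~2.4 and~2.6 there), where the same change-of-variables computation is carried out. One cosmetic slip: your lower bound on $\det(1-tW)$ should be $(1-\eta\Lambda)^2$ rather than $1-\eta\Lambda$, since the determinant is a product of two factors each bounded below by $1-\eta\Lambda$; this does not affect the argument.
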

The eigenvalues of the Weingarten map $W(x)$ are the principal curvatures of $\S$ on $x\in\S$, and they are independent of the parametrization of $\S$. Therefore, the term $\det(1-tW(x_\Sigma))$ in \eqref{eqn:coaera} is also independent of the parametrization of $\S$.

\begin{remark}
Let $h:\Omega_\epsilon\to(-\epsilon,\epsilon)$ be defined by 
$h(x_\Sigma+t\nu(x_\Sigma)):=t$. Then $|\nabla h|=1$ in $\Omega_\epsilon$, so the coarea formula (see \cite[Remark 2.94]{ambrosiofuscopallara}, for example) gives
\begin{equation}\label{eqn:coaera3}
\int_{\Omega_\epsilon}f(x)\,dx=\int_{-\epsilon}^\epsilon\int_{\Sigma_t} f(x)\,d\upsigma_t(x)\,dt\quad\text{for all }f \in L^1(\Omega_\epsilon).
\end{equation}
In view of \eqref{eqn:coaera}, one deduces that
\begin{equation}\label{eqn:coaera2}
\int_{\Sigma_t} f\,d\upsigma_t
=\int_{\Sigma}f(x_\Sigma+t\nu(x_\Sigma))\det(1-tW(x_\Sigma))\,d\upsigma(x_\Sigma)
\end{equation}
for all $t\in(-\epsilon,\epsilon)$ and all $f \in L^1(\Sigma_t)$.
\end{remark}

In the following lemma we give uniform growth estimates on the measures $\upsigma_t$, for $t\in[-\eta,\eta]$, that exhibit their 2-dimensional nature. These estimates will be used many times in the sequel, mostly for the case of $\upsigma$.
\begin{lemma}\label{2d AD regularity}
If $\eta>0$ is small enough, there exist $c_1,c_2>0$ such that
\begin{eqnarray}\label{sigma_t in Delta}
& &\upsigma_t(B_r(x))\leq c_1 r^2\quad \text{for all }x\in \Rt,\, r>0,\, t\in[-\eta,\eta],\label{sigma_t in Sigma2}\\
& &\upsigma_t(B_r(x))\geq c_2{r^2}\quad \text{for all }x\in \Sigma_t,\,0<r<2\diam(\Omega_\eta),\,t\in[-\eta,\eta],\label{sigma_t in Sigma}
\end{eqnarray}
being $B_r(x)$ the ball of radius $r$ centred at $x$.
\end{lemma}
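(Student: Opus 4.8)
The plan is to prove the two bounds \eqref{sigma_t in Sigma2} and \eqref{sigma_t in Sigma} separately, in both cases reducing the estimate on $\Sigma_t$ to an estimate on $\Sigma$ via the parametrization $x_\Sigma\mapsto x_\Sigma+t\nu(x_\Sigma)$ and the identity \eqref{eqn:coaera2}, exploiting that the Jacobian factor $\det(1-tW(x_\Sigma))$ is uniformly comparable to $1$ for $\eta$ small. Indeed, by \Cref{weingarten map} the eigenvalues of $W(x_\Sigma)$ are bounded uniformly on $\Sigma$, say by $M$, so that for $|t|\le\eta$ with $\eta M<1/2$ one has $1/2\le\det(1-tW(x_\Sigma))\le (3/2)^2$; this will reduce everything to the case $t=0$, i.e. to growth estimates for the surface measure $\upsigma$ on the $C^2$ surface $\Sigma$.

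For the upper bound \eqref{sigma_t in Sigma2}, the first step is a covering argument: since $\Sigma$ is compact, cover it by finitely many of the coordinate balls $B$ from the definition of $C^2$ domain, in each of which $\Sigma$ is a graph $x_3=\psi(x)$ over a region of $\R^2$ with $\psi\in C^2$ and Lipschitz constant uniformly bounded (as noted in the excerpt after the definition of $C^2$ domain). On such a graph patch the surface measure is $\sqrt{1+|\nabla\psi|^2}\,dx\le C\,dx$, so $\upsigma$ restricted to the patch is dominated by a constant times the push-forward of Lebesgue measure on $\R^2$ under a bi-Lipschitz map; hence $\upsigma(B_r(x)\cap(\text{patch}))\le C\, r^2$ for every ball, the constant depending only on the finite atlas. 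Summing over the finitely many patches gives $\upsigma(B_r(x))\le c_1 r^2$ for all $x\in\Rt$ and all $r>0$ (for $r$ larger than $\diam\Sigma$ the bound $\upsigma(\Sigma)\le c_1 r^2$ is trivial after adjusting $c_1$). Transporting back to $\Sigma_t$ via \eqref{eqn:coaera2} and the Jacobian comparison — together with the observation that the map $x_\Sigma\mapsto x_\Sigma+t\nu(x_\Sigma)$ is Lipschitz, so preimages of balls of radius $r$ sit inside balls of radius $Cr$ — yields \eqref{sigma_t in Sigma2} with a $t$-independent constant.

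For the lower bound \eqref{sigma_t in Sigma}, one restricts to $x\in\Sigma_t$ and $0<r<2\diam(\Omega_\eta)$. Working again in a graph patch around $P_\Sigma(x)$, the crucial point is that the orthogonal projection $\pi:\Rt\to\R^2$ onto the tangent plane (or the coordinate plane $\{x_3=0\}$ of that chart) restricted to $\Sigma\cap B_{cr}(P_\Sigma x)$ is bi-Lipschitz onto its image, which contains a two-dimensional disk of radius $\ge c'r$ around $\pi(P_\Sigma x)$, provided $r$ is small enough — here the $C^2$ (indeed Lipschitz) control on $\psi$ is used to guarantee that the patch is not "too tilted" and that a definite portion of a small disk in the base is covered. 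Consequently $\upsigma(B_r(P_\Sigma x)\cap\Sigma)\ge c_2 r^2$ for $r$ below a fixed threshold $r_0$ depending only on $\Sigma$; for $r_0\le r<2\diam(\Omega_\eta)$ one simply uses $\upsigma(B_r(P_\Sigma x))\ge\upsigma(B_{r_0}(P_\Sigma x))\ge c_2 r_0^2\ge c_2' r^2$ after rescaling the constant. Passing from $\Sigma$ to $\Sigma_t$ is again done through \eqref{eqn:coaera2}, the Jacobian comparison, and the bi-Lipschitz nature of $x_\Sigma\mapsto x_\Sigma+t\nu(x_\Sigma)$ (whose inverse is also Lipschitz for $\eta$ small, since its differential is $1-tW$, invertible with bounded inverse).

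The main obstacle I expect is the lower bound, specifically making the geometric claim that the projection of a small geodesic-size piece of $\Sigma$ onto a tangent plane covers a definite disk rigorous with constants that are uniform over all of $\Sigma$ and over $t$; this is where one must carefully use the uniform bound on the Lipschitz constants of the defining functions $\psi$ from the finite atlas, together with the uniform bound on the Weingarten map, to pin down the threshold $r_0$ and the constant $c_2$ independently of the point and of $t\in[-\eta,\eta]$. The upper bound is comparatively routine once the finite atlas and the bi-Lipschitz graph description are in place. One could alternatively cite that any compact $C^2$ (or merely Lipschitz) hypersurface is Ahlfors $2$-regular and then transfer the property to the parallel surfaces $\Sigma_t$ via the bi-Lipschitz correspondence with uniformly controlled constants, but I would prefer to give the short direct argument above since the required estimates on $\det(1-tW)$ and on the map $i_t$ are already available from \Cref{weingarten map} and \Cref{prop:coarea}.
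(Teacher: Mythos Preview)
Your argument is correct. The approach differs in organization from the paper's: the paper works \emph{directly} on each parallel surface $\Sigma_t$, observing that for $\eta$ small every $\Sigma_t$ is locally the graph of a $C^1$ function $T_t$ with $\max_{t\in[-\eta,\eta]}\|\nabla T_t\|_\infty\le C$ (invoked as ``this follows from the regularity of $\Sigma$''), and then runs the graph-patch argument on $\Sigma_t$ itself, with uniformity coming from this uniform gradient bound. You instead prove the Ahlfors--regularity once on $\Sigma$ and then transfer it to all $\Sigma_t$ simultaneously via the bi-Lipschitz map $x_\Sigma\mapsto x_\Sigma+t\nu(x_\Sigma)$ and the identity \eqref{eqn:coaera2}, using that $\det(1-tW)$ and the bi-Lipschitz constants are controlled uniformly by \Cref{weingarten map}. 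Your route separates cleanly the two issues (regularity of $\Sigma$, uniform transfer to parallel surfaces) and makes explicit the bi-Lipschitz property of $i_t$, whereas the paper's version hides this inside the assertion about $\nabla T_t$; conversely, the paper's direct argument avoids having to track preimages of balls under $i_t$. Both rely on the same core ingredient---a finite atlas of uniformly Lipschitz graph patches---and neither is materially shorter.
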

\begin{proof}
We first prove \eqref{sigma_t in Delta}. Let $r_0>0$ be a constant small enough to be fixed later on. 
If $r\geq r_0$, then 
\[
\upsigma_t(B_r(x))\leq \max_{t\in[-\eta,\eta]}\upsigma_t(\Rt)\leq C=\frac{C}{r_0^2}\,r_0^2\leq C_0r^2,
\]
where $C_0:=C/{r_0^2}>0$ only depends on $r_0$ and $\eta$. Therefore, we can assume that $r< r_0$. 
Let us see that we can also suppose that $x\in\Sigma_t$. 
In fact, if $\eta$ and $r_0$ are small enough and $0<r<r_0$, given $x\in\Rt$ one can always find $\tilde{x}\in \Sigma_t$ such that $\upsigma_t(B_r(x))\leq 2\upsigma_t(B_{{r}}(\tilde{x}))$ (if $x\in\Omega_\eta$ just take $\tilde{x}=P_\S x+t\nu(P_\S x)$).
Then if \eqref{sigma_t in Delta} holds for $\tilde{x}$, one gets
$\upsigma_t(B_r(x))\leq2\upsigma_t(B_{{r}}(\tilde{x}))\leq C r^2,$ as desired.

Thus, it is enough to prove \eqref{sigma_t in Delta} for $x\in \Sigma_t$ and $r<r_0$. If $r_0$ and $\eta$ are small enough, covering $\S_t$ by local chards we can find an open and bounded set $V_{t,r}\subset \R^2$ and a $C^1$ diffeomorphism $\varphi_t:\R^2\to \varphi_t(\R^2)\subset\Rt$  such that $\varphi_t(V_{t,r})=\Sigma_t\cap B_r(x)$. By means of a rotation if necessary, we can further assume that $\varphi_t$ is of the form $\varphi_t(y')=(y',T_t(y'))$, i.e. $\varphi_t$ is the graph of a $C^1$ function $T_t:\R^2\to \R$, and that
$\max_{t\in [-\eta,\eta]}\|\nabla T_t\|_\infty\leq C$ (this follows from the regularuty of $\S$).
Then, if $x'\in V_{t,r}$ is such that $\varphi_t(x')=x$, for any $y'\in V_{t,r}$ we get 
\[
r^2\geq|\varphi_t(y')-\varphi_t(x')|^2\geq |y'-x'|^2,
\]
which means that $V_{t,r}\subset\{y'\in\R^2:\,|x'-y'|<r\}=:B'\subset\R^2$.
Denoting by $\mathcal{H}^2$ the 2-dimensional Hausdorff measure, from \cite[Theorem 7.5]{mattila} we get
\[
\upsigma_t(B_r(x))=\mathcal{H}^2(\varphi_t(V_{t,r}))\leq \mathcal{H}^2(\varphi_t(B')) \leq \|\nabla\varphi_t\|_\infty^2 \mathcal{H}^2(B')\leq C r^2
\]
for all $t\in[-\eta,\eta]$,
so \eqref{sigma_t in Delta} is finally proved. 

Let us now deal with \eqref{sigma_t in Sigma}. Given $r_0>0$, by the regularity and boundedness of $\Sigma$ it is clear that
$\inf_{t\in[-\eta,\eta],\,x\in\Sigma_t}\upsigma_t(B_{r_0}(x))\geq C>0$. As before, for any $r_0\leq r<2\diam(\Omega_\eta)$ we easily see that
\begin{equation}\upsigma_t(B_r(x))\geq \upsigma_t(B_{r_0}(x))\geq C=\frac{C}{4\diam(\Omega_\eta)^2}\,4\diam(\Omega_\eta)^2\geq C_1r^2,\end{equation}
where $C_1:={C}/{4\diam(\Omega_\eta)^2}>0$ only depends on $r_0$ and $\eta$. Hence \eqref{sigma_t in Sigma} is proved for all $r_0\leq r<2\diam(\Omega_\eta)$. 

The case $0<r<r_0$ is treated, as before, using the local parametrization of $\S_t$ around $x$ by the graph of a function. Taking $\eta$ and $r_0$ small enough, we may assume the existence of $V_{t,r}$ and $\varphi_t$ as above, so let us set $\varphi_t(x')=x$ for some $x'\in V_{t,r}$. The fact that $\varphi_t$ is of the form $\varphi_t(y')=(y',T_t(y'))$ and that $\varphi_t(V_{t,r})=\Sigma_t\cap B_r(x)$ implies that $B'':=\{y'\in\R^2:\,|x'-y'|<C_2r\}\subset V_{t,r}$ for some $C_2>0$ small enough only depending on $\max_{t\in [-\eta,\eta]}\|\nabla T_t\|_\infty$, which is finite by assumption. Then, we easily see that
\begin{equation}\upsigma_t(B_r(x))=\upsigma_t(\varphi_t(V_{t,r}))
\geq \upsigma_t(\varphi_t(B''))=\int_{B''}\sqrt{1+|\nabla T_t(y')|^2}\,dy'
\geq\int_{B''}dy'=Cr^2,\end{equation}
where $C>0$ only depends on $C_2$. The lemma is finally proved.
\end{proof}

\subsection{Shell interactions for Dirac operators}\label{section shell interactions}\label{s1 ss2}
\mbox{}

In this section we briefly recall some useful instruments regarding the  $\delta$-shell interactions studied in \cite{amv1,amv2}. The reader should look at \cite[Section 2 and Section 5]{amv2} for the details.

Let $a\in \C$. A fundamental solution of $H-a$ is given by
\begin{equation}
\phi^a (x)=\frac{e^{-\sqrt{m^2-a^2}|x|}}{4\pi|x|}\Big(a+m\beta +\Big(1+\sqrt{m^2-a^2}|x|\Big)\,i\alpha\cdot\frac{x}{|x|^2}\Big)\quad \text{for }x\in\Rt\setminus\{0\},
\end{equation}
where $\sqrt{m^2-a^2}$ is chosen with positive real part whenever $a\in(\C\setminus\R)\cup\big((-m,m)\times\{0\}\big)$. To guarantee the exponential decay of $\phi^a$ at $\infty$, from now on we assume that $a\in(\C\setminus\R)\cup\big((-m,m)\times\{0\}\big)$.
Given $G\in L^2(\Rt)^4$ and $g\in L^2(\upsigma)^4$ we define
\begin{equation}\label{defi Phia}
\Phi^a(G,g)(x):=\int_{\Rt}\phi^a (x-y)\, G(y)\,dy
+\int_{\Sigma}\phi^a (x-y)g(y)\,d\upsigma(y)
\quad\text{for }x\in\Rt\setminus\Sigma.
\end{equation}
Then, $\Phi^a:L^2(\Rt)^4\times L^2(\upsigma)^4\to L^2(\Rt)^4$ is linear and bounded and $\Phi^a(G,0)\in H^1(\Rt)^4$. 
We also set 
\begin{equation}
\Phi^a_\upsigma G:=\operatorname{tr}_{\upsigma}(\Phi^a(G,0))\in L^2(\upsigma)^4,
\end{equation}
being $\operatorname{tr}_{\upsigma}$ the trace operator on $\Sigma$.
Finally, given $x\in\Sigma$ we define
\begin{equation}
C_\upsigma^a g(x)
:=\lim_{\epsilon\searrow 0}\int_{\Sigma\cap\{|x-y|>\epsilon\}}\phi^a (x-y) g(y)\,d\upsigma(y)
\quad\text{and}\quad C^a_{\pm}g(x):=\lim_{\Omega_\pm\ni y \overset{nt}{\to}x}\Phi^a(0,g)(y),
\end{equation}
where $\Omega_\pm\ni y \overset{nt}{\to}x$ means that $y$ tends to $x$ non-tangentially from the interior/exterior of $\Omega$, respectively, i.e. $\Omega_+:=\Omega$ and $\Omega_-:=\Rt\setminus\overline{\Omega}$.  The operators $C_\upsigma^a$ and $C^a_\pm$ are linear and bounded in $L^2(\upsigma)^4$. Moreover, the following Plemelj-Sokhotski jump formulae holds:
\begin{equation}\label{Plemelj jump formulae}
C^a_\pm=\mp \frac{i}{2}(\alpha\cdot\nu)+C^a_\upsigma.
\end{equation}

Let $\lambda_e\in\R$. Using $\Phi^a$, we define the electrostatic $\delta$-shell interaction appearing in Therorem \ref{Main theorem}  as follows:
\begin{equation}\label{eq defi electro}
\begin{split}
&D(H+\lambda_e\delta_\Sigma):=\{\Phi^0(G,g): \,G\in L^2(\Rt)^4,\, g \in L^2(\upsigma)^4,\, \lambda_e\Phi^0_\upsigma G=-(1+\lambda_e C^0_\upsigma)g\},\\
&(H+\lambda_e\delta_\Sigma)\varphi:=H\varphi+\lambda_e\frac{\varphi_++\varphi_-}{2}\,\upsigma\quad
\text{for }\varphi\in D(H+\lambda_e\delta_\Sigma),
\end{split}
\end{equation}
where $H\varphi$ in the right hand side of the second statement in \eqref{eq defi electro} is understood in the sense of distributions and $\varphi_\pm$ denotes the boundary traces of $\varphi$ when one approaches to $\Sigma$ from $\Omega_\pm$. In particular, one has $(H+\lambda_e\delta_\Sigma)\varphi=G\in L^2(\Rt)^4$ for all $\varphi=\Phi^0(G,g)\in D(H+\lambda_e\delta_\Sigma)$.
We should mention that one recovers the free Dirac operator in $H^1(\Rt)^4$ when $\lambda_e=0$.

From \cite[Section 3.1]{amv2} we know that $H+\lambda_e\delta_\Sigma$ is self-adjoint for all $\lambda_e\neq \pm 2$. Besides, if $\lambda_e\neq0$,
given $a\in(-m,m)$ and $\varphi=\Phi^0(G,g)\in D(H+\lambda_e\delta_\Sigma)$, 
\begin{equation}\label{BS principle}
(H+\lambda_e\delta_\Sigma-a)\varphi=0\quad\text{if and only if}\quad(\textstyle{\frac{1}{\lambda_e}}+C^a_\upsigma)g=0.
\end{equation}
This corresponds to the Birman-Swinger principle in the electrostatic $\delta$-shell interaction setting. Since the case $\lambda_e=0$ corresponds to the free Dirac operator, it can be excluded from this consideration because it is well known that the free Dirac operator doesn't have pure point spectrum. Moreover, the relation \eqref{BS principle} can be easily extended to the case of 
$a\in(\C\setminus\R)\cup\big((-m,m)\times\{0\}\big)$ (one still has exponential decay of a fundamental solution of $H-a$).

In the same vein, given $\lambda_s\in\R$, we define the Lorentz scalar $\delta$-shell interaction as follows:
\begin{equation}\label{eq defi scalar}
\begin{split}
&D(H+\lambda_s\beta\,\delta_\Sigma):=\{\Phi^0(G,g): \,G\in L^2(\Rt)^4,\, g \in L^2(\upsigma)^4,\, \lambda_s\Phi^0_\upsigma G=-(\beta+\lambda_s C^0_\upsigma)g\},\\
&(H+\lambda_s\beta\,\delta_\Sigma)\varphi
:=H\varphi+\lambda_s\beta\,\frac{\varphi_++\varphi_-}{2}\,\upsigma\quad
\text{for }\varphi\in D(H+\lambda_s\beta\,\delta_\Sigma).
\end{split}
\end{equation}
From \cite[Section 5.1]{amv2} we know that $H+\lambda_s\beta\,\delta_\Sigma$ is self-adjoint for  all $\lambda_s\in \R $. Besides, 
given $\lambda_s\neq0$, $a\in(\C\setminus\R)\cup\big((-m,m)\times\{0\}\big)$ and $\varphi=\Phi^0(G,g)\in D(H+\lambda_s\beta\,\delta_\Sigma)$, arguing as in \eqref{BS principle} one gets
\begin{equation}\label{BS principle scalar}
(H+\lambda_s\beta\,\delta_\Sigma-a)\varphi=0\quad\text{if and only if}\quad(\textstyle{\frac{\beta}{\lambda_s}}+C^a_\upsigma)g=0.
\end{equation}

The following lemma describes the resolvent operator of the $\delta$-shell interactions presented in \eqref{eq defi electro} and \eqref{eq defi scalar}.
\begin{lemma}
Given $\lambda_e,\,\lambda_s\in\R$ with $\lambda_e\neq \pm 2$, $a\in\C\setminus\R$ and $F\in L^2(\Rt)^4$, the following identities hold:
\begin{align}\label{resolvent H+lambda delta}
&(H+\lambda_e\delta_\Sigma-a)^{-1}F=(H-a)^{-1}F-\lambda_e\Phi^a\big(0,\left(1+\lambda_e C^a_\upsigma\right)^{-1}\Phi^a_\upsigma F\big),\\\label{resolvent H+lambda beta delta}
&(H+\lambda_s\beta\,\delta_\Sigma-a)^{-1}F=(H-a)^{-1}F-\lambda_s\Phi^a\big(0,\left(\beta+\lambda_s C^a_\upsigma\right)^{-1}\Phi^a_\upsigma F\big).
\end{align}
\end{lemma}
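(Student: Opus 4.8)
The plan is to derive the resolvent formulas \eqref{resolvent H+lambda delta} and \eqref{resolvent H+lambda beta delta} by the standard Krein-type ansatz: for $F\in L^2(\Rt)^4$ and $a\in\C\setminus\R$, write the candidate solution $\varphi=(H-a)^{-1}F+\Phi^a(0,g)$ for an unknown density $g\in L^2(\upsigma)^4$, and then determine $g$ by imposing that $\varphi$ lies in the domain of the respective shell operator. Concretely, set $\varphi=\Phi^a\big((H-a)^{-1}F\big|_{\text{as source }G},\,g\big)$; by construction $\Phi^a(G,0)=(H-a)^{-1}F$ since $\phi^a$ is a fundamental solution of $H-a$, so $(H-a)\varphi=F$ in the sense of distributions away from $\Sigma$, and the shell term that appears when differentiating $\Phi^a(0,g)$ across $\Sigma$ is exactly $\lambda_e\tfrac{\varphi_++\varphi_-}{2}\upsigma$ (resp. with $\lambda_s\beta$) provided $g$ is chosen correctly. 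Thus $\varphi$ will be the resolvent applied to $F$ once we check membership in the domain, which by \eqref{eq defi electro} amounts to the algebraic identity $\lambda_e\Phi^0_\upsigma G=-(1+\lambda_e C^0_\upsigma)g$ — but here with $a$ in place of $0$, i.e. $\lambda_e\Phi^a_\upsigma F=-(1+\lambda_e C^a_\upsigma)g$.

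The key steps, in order, are: (i) record that $\Phi^a(G,0)=(H-a)^{-1}F$ when $G=F$, using that $\phi^a$ is a fundamental solution with the chosen branch of $\sqrt{m^2-a^2}$ guaranteeing $L^2$ decay for $a\in\C\setminus\R$; (ii) compute the distributional action of $H-a$ on $\Phi^a(0,g)$, obtaining a surface term supported on $\Sigma$ — this is where the Plemelj–Sokhotski jump relations \eqref{Plemelj jump formulae} enter, giving that the two one-sided traces of $\Phi^a(0,g)$ are $C^a_\pm g=\mp\tfrac{i}{2}(\alpha\cdot\nu)g+C^a_\upsigma g$, so the jump is $(\alpha\cdot\nu)\cdot(-i)g$ across $\Sigma$ in the appropriate normalization and the average trace is $C^a_\upsigma g$; (iii) assemble $\varphi$ and read off that $(H-a)\varphi=F$ away from $\Sigma$ with the correct shell distribution iff $g$ solves the linear equation coming from the domain condition; (iv) invert: since $a\in\C\setminus\R$ and $\lambda_e\neq\pm2$ (resp. $\lambda_s\in\R$), the operator $1+\lambda_e C^a_\upsigma$ (resp. $\beta+\lambda_s C^a_\upsigma$) is boundedly invertible on $L^2(\upsigma)^4$ — this is precisely the Birman–Schwinger-type invertibility underlying self-adjointness, already established in \cite{amv2} for these parameter ranges and $a$ off the real axis, so it may be quoted; (v) solve $g=-(1+\lambda_e C^a_\upsigma)^{-1}\Phi^a_\upsigma F$ and substitute back, producing exactly \eqref{resolvent H+lambda delta}, then repeat verbatim with $\beta$ inserted for \eqref{resolvent H+lambda beta delta}.

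The main obstacle I expect is the careful bookkeeping in step (ii)–(iii): verifying that with the normalization in \eqref{eq defi electro}, where the shell term is $\lambda_e\tfrac{\varphi_++\varphi_-}{2}\upsigma$ and the density $g$ is related to the traces through the Plemelj formulae, one genuinely recovers the stated domain condition with the factor $\lambda_e$ in front and the correct sign, rather than a variant of it. This requires matching the distributional identity $(H-a)\Phi^a(0,g)=-i(\alpha\cdot\nu)\,[\,\cdot\,]\,\delta_\Sigma$ (the jump of $\Phi^a(0,g)$ is carried by $\alpha\cdot\nu$) against the prescribed shell term and checking consistency — i.e. that the $g$ determined by "$(H-a)\varphi$ has the right singular part" coincides with the $g$ determined by "$\varphi\in D(H+\lambda_e\delta_\Sigma)$". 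Once this compatibility is pinned down (it is the analogue for general $a$ of the $a=0$ definition, and is implicit in \cite{amv2}), everything else is routine: linearity and boundedness of $\Phi^a$, $\Phi^a_\upsigma$, $C^a_\upsigma$ on the relevant spaces were already recorded above, and the invertibility of $1+\lambda_e C^a_\upsigma$ and $\beta+\lambda_s C^a_\upsigma$ for $a\in\C\setminus\R$ is exactly the content of the self-adjointness results cited from \cite{amv2}. I would therefore present the proof as: apply the resolvent to $F$, use the known characterization of $D(H+\lambda_e\delta_\Sigma)$ with the free resolvent as the "$G$-part," solve the resulting invertible linear equation for the "$g$-part," and note the Lorentz scalar case is identical with $\beta$ replacing $1$.
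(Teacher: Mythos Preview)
Your approach is the same Krein-type ansatz the paper uses, and the overall structure (write $\varphi=\Phi^a(F,g)$, impose the domain condition, solve for $g$) is correct. Two points are worth sharpening, because you defer them to citation while the paper actually works them out.

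First, the domain $D(H+\lambda_e\delta_\Sigma)$ in \eqref{eq defi electro} is defined using $\Phi^0$, not $\Phi^a$; you assert that the condition ``with $a$ in place of $0$'' is equivalent, and flag this as the main obstacle. The paper resolves this concretely: starting from $\varphi=\Phi^0(G,g)$ in the domain and $F=(H+\lambda_e\delta_\Sigma-a)\varphi$, it proves the identity $\Phi^0(G,g)=\Phi^a(F,g)$ by applying $H$ and convolving with $\phi^a$, then takes non-tangential boundary values and uses \eqref{Plemelj jump formulae} to arrive at $\Phi^a_\upsigma F=-(\tfrac{1}{\lambda_e}+C^a_\upsigma)g$. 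So the ``change of spectral parameter'' is not merely implicit; it is a short explicit computation.

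Second, the invertibility of $1+\lambda_e C^a_\upsigma$ for $a\in\C\setminus\R$ is not quite a black-box consequence of self-adjointness in \cite{amv2}. The paper argues it directly: the kernel is trivial by the Birman--Schwinger principle \eqref{BS principle}; the range is closed by adapting the proof of \cite[Lemma 3.7]{amv1} (this is where $\lambda_e\neq\pm2$ is used); and the cokernel is trivial because $\overline{(\phi^a)^t}(x)=\phi^{\bar a}(-x)$ yields $(\mathrm{Range}(\tfrac{1}{\lambda_e}+C^a_\upsigma))^\perp=\mathrm{Kernel}(\tfrac{1}{\lambda_e}+C^{\bar a}_\upsigma)=\{0\}$. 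Your step (iv) should include this argument rather than cite it. With these two points filled in, your proof coincides with the paper's.
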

\begin{proof}
We will only show \eqref{resolvent H+lambda delta}, the proof of \eqref{resolvent H+lambda beta delta} is analogous.
Since $H+\lambda_e\delta_\Sigma$ is self-adjoint for $\lambda_e\neq \pm 2$, $(H+\lambda_e\delta_\Sigma-a)^{-1}$ is well-defined and bounded in $L^2(\Rt)^4$. For $\lambda_e=0$ there is nothing to prove, so we assume $\lambda_e\neq 0$.

Let $\varphi=\Phi^0(G,g)\in D(H+\lambda_e\delta_\Sigma)$ as in \eqref{eq defi electro} and $F=(H+\lambda_e\delta_\Sigma-a)\varphi\in L^2(\Rt)^4$. Then,
\begin{equation}\label{F-G=a Phi(G+g)}
F=(H+\lambda_e\delta_\Sigma-a)\Phi^0(G,g)=G-a\Phi^0(G,g).
\end{equation}
If we apply $H$ on both sides of \eqref{F-G=a Phi(G+g)} and we use that $H\Phi^0(G,g)=G+g\upsigma$ in the sense of distributions, we get $HF=HG-a(G+g\upsigma)$, that is, $(H-a)G=(H-a)F+aF+ag\upsigma$. 
Convolving with $\phi^a$ the left and right hand sides of this last equation, we obtain
$G=F+a\Phi^a(F,0)+a\Phi^a(0,g)$, thus $G-F=a\Phi^a(F,g)$. This, combined with \eqref{F-G=a Phi(G+g)}, yields
\begin{equation}\label{Phi0(G,g)=Phia(F,g)}
\Phi^0(G,g)=\Phi^a(F,g).
\end{equation}
Therefore, taking non-tangential boundary values on $\Sigma$ from inside/outside of $\Omega$ in \eqref{Phi0(G,g)=Phia(F,g)} we obtain  \begin{equation}
\Phi^0_\upsigma G+C^0_\pm g=\Phi^a_\upsigma F +C^a_\pm g.\end{equation}  
Since $\Phi^0(G,g)\in D(H+\lambda_e\delta_\Sigma)$, thanks to \eqref{eq defi electro} and \eqref{Plemelj jump formulae} we conclude that
\begin{equation}\label{resolvent eq1}
\Phi_\upsigma^a F=-\Big(\frac{1}{\lambda_e}+C^a_\upsigma\Big) g.
\end{equation}

Since $a\in\C\setminus\R$ and $H+\lambda_e\delta_\Sigma$ is self-adjoint for $\lambda_e\neq\pm2$, by \eqref{BS principle} we see that $\text{Kernel}(\frac{1}{\lambda_e}+C^a_\upsigma)=\{0\}$. Moreover, using the ideas of the proof of \cite[Lemma 3.7]{amv1} and that $\lambda_e\neq \pm 2$, one can show that 
$\frac{1}{\lambda_e}+C^a_\upsigma$ has closed range.
Finally, since we are taking the square root so that 
\begin{equation}
\overline{\sqrt{m^2-a^2}}=\sqrt{m^2-\bar{a}^2},
\end{equation} 
following \cite[Lemma 3.1]{amv1} we see that $\overline{(\phi^a)^t}(x)=\phi^{\bar{a}}(-x)$. Here, $(\phi^a)^t$ denotes the transpose matrix of $\phi^a$. Thus we conclude that $(\text{Range}(\frac{1}{\lambda_e}+C^a_\upsigma))^{\perp}=\text{Kernel}(\frac{1}{\lambda_e}+C^{\bar{a}}_\upsigma)=\{0\}$, and so $\frac{1}{\lambda_e}+C^a_\upsigma$ is invertible. Then, by \eqref{resolvent eq1}, we obtain 
\begin{equation}\label{resolvent eq2}
g=-\Big(\frac{1}{\lambda_e}+C^a_\upsigma\Big)^{-1}\Phi_\upsigma^a F.
\end{equation}
Thanks to \eqref{Phi0(G,g)=Phia(F,g)} and \eqref{resolvent eq2}, we finally get
\begin{align}
(H+\lambda_e\delta_\Sigma-a)^{-1}F&=\varphi=\Phi^0(G,g)=\Phi^a(F,g)
=\Phi^a\Big( F,-\Big(\frac{1}{\lambda_e}+C^a_\upsigma\Big)^{-1}\Phi_\upsigma^a F\Big)\\
&=\Phi^a(F,0)-\lambda_e\Phi^a\big(0,\left(1+\lambda_e C^a_\upsigma\right)^{-1}\Phi^a_\upsigma F\big),
\end{align}
and the lemma follows because $\Phi^a(\cdot,0)=(H-a)^{-1}$ as a bounded operator in $L^2(\Rt)^4$.
\end{proof}

\subsection{Coupling the free Dirac operator with short range potentials as in \eqref{correc1}}\label{ss coupling Ve}
\mbox{}

Given $\Vep$ as in \eqref{def bigV}, set
\begin{equation}
H^e_\epsilon:=H+\Vep\qquad\text{and}\qquad 
H^s_\epsilon:=H+\beta \Vep.
\end{equation} 
Recall that these operators are self-adjoint on $H^1(\Rt)^4$.  In the following, we give the resolvent formulae for $H^e_\epsilon$ and $H^s_\epsilon$.

Throughout this section we make an abuse of notation. Remember that, given $G\in L^2(\Rt)^4$ and $g\in L^2(\upsigma)^4$, in \eqref{defi Phia} we already defined $\Phi^a(G,g)$. However, now we make the identification  $\Phi^a(\cdot)\equiv\Phi^a(\cdot,0)$, that is, in this section we identify $\Phi^a$ with an operator acting on $L^2(\Rt)^4$ by always assuming that the second entrance in $\Phi^a$ vanishes. Besides, in this section we use the symbol $\sigma(\cdot)$ to denote the spectrum of an operator, the reader sholud not confuse it with the symbol $\upsigma$ for the surface measure on $\S$.

\begin{proposition}\label{propo 28}
Let $\ue$ and $\ve$ be as in  \eqref{eq u,v}. Then,
\begin{enumerate}[label=$(\roman*)$]
\item $a\in\rho(H^e_\epsilon)$ if and only if $-1\in\rho(\ue\Phi^a\ve)$, where $\rho(\cdot)$ denotes the resolvent set,
\item $a\in\sigma_{pp}(H^e_\epsilon)$ if and only if $-1\in \sigma_{pp}(\ue\Phi^a\ve)$, where $\sigma_{pp}(\cdot)$ denotes the pure point spectrum. Moreover, the multiplicity of $a$ as eigenvalue of $H^e_\epsilon$ coincides with the multiplicity of $-1$ as eigenvalue of $\ue\Phi^a\ve$.
\end{enumerate} 
Furthermore, the following resolvent formula holds:
\begin{equation}\label{Birman Shwinger}
(H^e_\epsilon-a)^{-1}=\Phi^a - \Phi^a \ve\left(1+\ue\Phi^a\ve\right)^{-1}\ue \Phi^a.
\end{equation}
\end{proposition}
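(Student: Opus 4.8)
The plan is to prove Proposition \ref{propo 28} as a Birman--Schwinger-type factorization for the coupling $H^e_\epsilon = H + \Vep$, exactly as one does for the abstract perturbation $H + v^* u$ with $\Vep = \ve \ue = \sgn(\Vep)|\Vep|$ (so $u = \ve$ and the right factor is $\ue$, or vice versa, but since $\ue \ve = \ve \ue = \Vep$ the roles are interchangeable and we take the factorization $\Vep = \ue \cdot \ve$). First I would record the algebraic resolvent identity: for $a \in \rho(H)$ (in particular for $a \in \C \setminus \R$, since $H$ is self-adjoint) and assuming $1 + \ue \Phi^a \ve$ is invertible on $L^2(\Rt)^4$, one checks by direct computation that the operator $R := \Phi^a - \Phi^a \ve (1 + \ue \Phi^a \ve)^{-1} \ue \Phi^a$ satisfies $(H^e_\epsilon - a) R = I$ and $R(H^e_\epsilon - a) = I$ on the appropriate domains. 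This is the standard second-resolvent/Kato manipulation: writing $H^e_\epsilon - a = (H - a) + \ve\ue$, multiplying $R$ on the left by $(H-a) + \ue\ve$ wait --- one must be careful that $\Phi^a = (H-a)^{-1}$ maps $L^2$ into $D(H) = H^1(\Rt)^4$ and that $\ue,\ve \in L^\infty$ so multiplication by them preserves $L^2$; then the identity collapses after recognizing the telescoping $\ue\Phi^a\ve(1+\ue\Phi^a\ve)^{-1} = I - (1+\ue\Phi^a\ve)^{-1}$.

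Next I would establish the equivalence $(i)$: $a \in \rho(H^e_\epsilon) \iff -1 \in \rho(\ue\Phi^a\ve)$. The implication $(\Leftarrow)$ follows immediately from the resolvent formula just verified, since invertibility of $1 + \ue\Phi^a\ve$ produces a bounded two-sided inverse of $H^e_\epsilon - a$. For $(\Rightarrow)$, I would argue contrapositively: if $-1 \in \sigma(\ue\Phi^a\ve)$, one uses that $\ue\Phi^a\ve$ and $\ve\Phi^a\ue$ have the same nonzero spectrum (a general fact for products $AB$ and $BA$ of bounded operators), and then exhibits that $H^e_\epsilon - a$ fails to be invertible --- either by constructing an approximate eigenvector from an approximate eigenvector of $\ue\Phi^a\ve$ at $-1$ via $\Phi^a\ve$, or more cleanly by the symmetric resolvent argument showing $a \in \rho(H^e_\epsilon) \Rightarrow 1 + \ve\Phi^a\ue$ (equivalently $1+\ue\Phi^a\ve$) is invertible with inverse expressed through $(H^e_\epsilon - a)^{-1}$, namely $(1+\ue\Phi^a\ve)^{-1} = 1 - \ue(H^e_\epsilon-a)^{-1}\ve$, which one verifies directly. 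This last identity is really the crux: proving it in both directions gives $(i)$ for free.

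For part $(ii)$, the pure point statement with matching multiplicities, I would show that $\varphi \mapsto \ue\varphi$ restricts to a bijection between $\ker(H^e_\epsilon - a)$ and $\ker(1 + \ue\Phi^a\ve)$, with inverse $g \mapsto -\Phi^a\ve g$. If $(H^e_\epsilon - a)\varphi = 0$ with $\varphi \in H^1(\Rt)^4$, then $(H-a)\varphi = -\Vep\varphi = -\ve\ue\varphi$, so $\varphi = -\Phi^a\ve(\ue\varphi)$; applying $\ue$ gives $\ue\varphi = -\ue\Phi^a\ve(\ue\varphi)$, i.e. $g := \ue\varphi \in \ker(1+\ue\Phi^a\ve)$. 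Conversely if $(1+\ue\Phi^a\ve)g = 0$, set $\varphi := -\Phi^a\ve g \in H^1(\Rt)^4$; then $(H-a)\varphi = -\ve g$ and one must check $\ve g = \ve\ue\varphi = \Vep\varphi$, which follows because $\ue\varphi = -\ue\Phi^a\ve g = g$ (using $\ker$), hence $\ve g = \ve\ue\varphi$; thus $(H^e_\epsilon-a)\varphi = 0$. Injectivity of $\varphi \mapsto \ue\varphi$ on $\ker(H^e_\epsilon-a)$: if $\ue\varphi = 0$ then $\varphi = -\Phi^a\ve\ue\varphi = -\Phi^a\Vep\varphi$, but also $\Vep\varphi = \ve\ue\varphi = 0$, so $\varphi = 0$; surjectivity is the correspondence above; linearity is clear. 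Hence $\dim\ker(H^e_\epsilon - a) = \dim\ker(1+\ue\Phi^a\ve)$.

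The main obstacle I anticipate is not conceptual but bookkeeping: one must be scrupulous about domains and about the identification $\Phi^a \equiv \Phi^a(\cdot, 0)$, verifying that $\Phi^a: L^2(\Rt)^4 \to H^1(\Rt)^4$ is indeed $(H-a)^{-1}$ (this is stated after \eqref{defi Phia}), that the singular potential $\Vep$ is merely a bounded multiplication operator (it is, since $\Vep \in L^\infty$), and that there is no issue with the $\delta$-shell trace terms because here the perturbation is genuinely $L^\infty$ --- the shell structure only enters through the shape of $\supp\Vep$ and plays no role in this proposition. The algebraic identities $\ue\Phi^a\ve(1+\ue\Phi^a\ve)^{-1} = I - (1+\ue\Phi^a\ve)^{-1}$ and $(1+\ue\Phi^a\ve)^{-1} = I - \ue(H^e_\epsilon-a)^{-1}\ve$ should be stated and checked carefully, as they carry the whole proof; everything else is routine manipulation.
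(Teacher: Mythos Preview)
Your proposal is correct. The resolvent formula \eqref{Birman Shwinger} and the kernel bijection in part $(ii)$ are handled essentially as in the paper: direct algebraic verification of $(H^e_\epsilon-a)R=I$ for the former, and the maps $\varphi\mapsto\ue\varphi$, $g\mapsto-\Phi^a\ve g$ between $\ker(H^e_\epsilon-a)$ and $\ker(1+\ue\Phi^a\ve)$ for the latter.

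Where you diverge from the paper is in part $(i)$. The paper does not prove the equivalence $a\in\rho(H^e_\epsilon)\Leftrightarrow-1\in\rho(\ue\Phi^a\ve)$ directly; instead it invokes an abstract lemma of Konno--Kuroda, whose hypotheses require the eigenvalue correspondence (your part $(ii)$) together with the existence of \emph{one} $a\in\rho(H^e_\epsilon)$ with $-1\in\rho(\ue\Phi^a\ve)$. To produce such an $a$, the paper uses that $\sigma_{pp}(H^e_\epsilon)\subset(-m,m)$ is finite, picks $a\in(-m,m)\cap\rho(H^e_\epsilon)$, and then appeals to compactness of $\ue\Phi^a\ve$ (cited from \v{S}eba) plus the Fredholm alternative. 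Your route is more self-contained: the identity $(1+\ue\Phi^a\ve)^{-1}=1-\ue(H^e_\epsilon-a)^{-1}\ve$ (valid whenever $a\in\rho(H^e_\epsilon)\cap\rho(H)$) gives the implication $(\Rightarrow)$ directly, with no compactness needed. This is cleaner and avoids external references, at the small cost of checking that identity carefully (which you flag). One minor point you leave implicit but the paper makes explicit: for $(ii)$ one must know $\sigma_{pp}(H^e_\epsilon)\subset\rho(H)$ so that $\Phi^a$ exists; this follows from $\sigma_{ess}(H^e_\epsilon)=\sigma_{ess}(H)=\sigma(H)$.
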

\begin{proof}
To prove $(i)$ and $(ii)$ it is enough to verify that the assumptions of  \cite[Lemma 1]{konnokuroda} are satisfied. That is, we just need to show that $a\in\sigma_{pp}(H^e_\epsilon)$ if and only if $-1\in \sigma_{pp}(\ue\Phi^a\ve)$ and that there exists $a \in \rho(H^e_\epsilon)$ such that $-1\in \rho(\ue\Phi^a\ve)$.

Assume that $a\in\sigma_{pp}(H^e_\epsilon)$. Then $(H+\Vep-a)F=0$ for some $F\in L^2(\Rt)^4$ with $F\not\equiv 0$, so $(H-a)F=-\Vep F$. Using that $\sigma(H)=\sigma_{ess}(H)$, where $\sigma_{ess}(\cdot)$ denotes the essential spectrum, it is not hard to show that indeed $\Vep F\not\equiv0$.  Since $\Vep=\ve \ue$, by setting $G=\ue F\in L^2(\Rt)^4$ we get that $G\not\equiv0$ and 
\begin{equation}\label{(H-a)F=ve G}
(H-a)F=-\ve G.
\end{equation} 
From \cite[Theorem 4.7]{thaller} we know that $\sigma_{ess}(H+\Vep)=\sigma_{ess}(H)=\sigma(H)$. Since $\sigma(H^e_\epsilon)$ is the disjoint union of the pure point spectrum and the essential spectrum, we resume that $\sigma_{pp}(H^e_\epsilon)\subset\rho(H)$, which means that $(H-a)^{-1}=\Phi^a$ is a bounded operator on $L^2(\Rt)^4$. By \eqref{(H-a)F=ve G},
$F=-\Phi^a \ve G$. If we multiply both sides of this last equation by $\ue$ we obtain $G=\ue F=-\ue \Phi^a \ve G$, so $-1\in \sigma_{pp}(\ue\Phi^a\ve)$ as desired.

On the contrary, assume now that there exists a nontrivial $G\in L^2(\Rt)^4$ such that $\ue \Phi^a \ve G=-G$. If we take $F=\Phi^a\ve G\in L^2(\Rt)$, we easily see that $F\not\equiv 0$ and $\Vep F=-(H-a)F$, which means that $a$ is an eigenvalue of $H^e_\epsilon$.

To conclude the  first part of the proof, it remains to show that there exists  $a \in \rho(H^e_\epsilon)$ such that $-1 \in \rho(\ue \Phi^a \ve)$. By \cite[Theorem 4.23]{thaller} we know that $\sigma_{pp}(H^e_\epsilon)$ is a finite sequence contained in $(-m,m)$, so we can chose $a\in (-m,m)\cap \rho(H^e_\epsilon)$.  Moreover, by \cite[Lemma 2]{sebaabsorption}, 
$\ue \Phi^a \ve$ is a compact operator. Then, by Fredholm's alternative,  either $-1\in \sigma_{pp}(\ue \Phi^a \ve)$ or $-1 \in\rho(\ue \Phi^a \ve)$. But we can discard the first option, otherwise $a\in\sigma_{pp}(H^e_\epsilon)$, in contradiction with $a\in \rho(H^e_\epsilon)$.

Let us now prove \eqref{Birman Shwinger}. Writing $\Vep=\ve\ue$ and using that $(H-a)^{-1}=\Phi^a$, we have
\begin{align}
(H_\epsilon^e&-a)\big(\Phi^a - \Phi^a \ve(1+\ue\Phi^a\ve)^{-1}\ue \Phi^a\big)\\
&= 1-\ve\left(1+\ue\Phi^a\ve\right)^{-1}\ue \Phi^a+\ve\ue\Phi^a
-\ve(-1+1+\ue\Phi^a\ve)\left(1+\ue\Phi^a\ve\right)^{-1}\ue \Phi^a\\
&=1-\ve\left(1+\ue\Phi^a\ve\right)^{-1}\ue \Phi^a+\ve\ue\Phi^a+\ve\left(1+\ue\Phi^a\ve\right)^{-1}\ue \Phi^a-\ve\ue\Phi^a=1,
\end{align}
as desired. This completes the proof of the proposition.
\end{proof}

The following result can be proved in the same way, we leave the details for the reader.
\begin{proposition}\label{propo 28 scalar}
Let $\ue$ and $\ve$ be as in  \eqref{eq u,v}. Then,
\begin{enumerate}[label=$(\roman*)$]
\item $a\in\rho(H^s_\epsilon)$ if and only if $-1\in\rho(\beta\ue\Phi^a\ve)$,
\item $a\in\sigma_{pp}(H^s_\epsilon)$ if and only if $-1\in \sigma_{pp}(\beta \ue\Phi^a\ve)$. Moreover, the multiplicity of $a$ as eigenvalue of $H^s_\epsilon$ coincides with the multiplicity of $-1$ as eigenvalue of $\beta\ue\Phi^a\ve$.
\end{enumerate} 
Furthermore, the following resolvent formula holds:
\begin{equation}\label{Birman Shwinger scalar}
(H^s_\epsilon-a)^{-1}=\Phi^a - \Phi^a \ve\left(\beta+\ue\Phi^a\ve\right)^{-1}\ue \Phi^a.
\end{equation}
\end{proposition}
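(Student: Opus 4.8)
The plan is to repeat the proof of \Cref{propo 28} almost verbatim, the only change being that the Birman--Schwinger object is now $\beta\ue\Phi^a\ve$ in place of $\ue\Phi^a\ve$. The observation that makes this harmless is that $\ue$ and $\ve$ from \eqref{eq u,v} act as scalar multiplication operators and therefore commute with the constant matrix $\beta$; in particular $\beta\Vep=\beta\ve\ue=\ve\beta\ue$, and this is the factorization of the perturbation $\beta\Vep$ that I would feed to the abstract machinery. As in \Cref{propo 28}, assertions $(i)$ and $(ii)$ (the multiplicity statement included) follow from \cite[Lemma 1]{konnokuroda} once its two hypotheses are checked: (a) $a\in\sigma_{pp}(H^s_\epsilon)$ iff $-1\in\sigma_{pp}(\beta\ue\Phi^a\ve)$, and (b) there exists $a\in\rho(H^s_\epsilon)$ with $-1\in\rho(\beta\ue\Phi^a\ve)$.

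For (a) I would argue exactly as in \Cref{propo 28}. If $(H+\beta\Vep-a)F=0$ with $F\not\equiv0$, then $(H-a)F=-\beta\Vep F=-\ve\,\beta\ue F$; here $\beta\Vep F\not\equiv0$ since $H$ has empty point spectrum, and $\sigma_{pp}(H^s_\epsilon)\subset\rho(H)$ because $\sigma_{ess}(H^s_\epsilon)=\sigma(H)$ (by \cite[Theorem 4.7]{thaller}, as in the electrostatic case), so $(H-a)^{-1}=\Phi^a$ is bounded and $F=-\Phi^a\ve G$ with $G:=\beta\ue F$. Multiplying by $\beta\ue$ yields $(1+\beta\ue\Phi^a\ve)G=0$, and $G\not\equiv0$: otherwise $\ue F\equiv0$, hence $\beta\Vep F\equiv0$, contradicting the previous line. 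Conversely, from $(1+\beta\ue\Phi^a\ve)G=0$ with $G\not\equiv0$ one sets $F:=-\Phi^a\ve G$, notes $\beta\ue F=-\beta\ue\Phi^a\ve G=G\not\equiv0$ (so $F\not\equiv0$), and then $(H-a)F=-\ve G=-\ve\,\beta\ue F=-\beta\Vep F$, i.e. $a\in\sigma_{pp}(H^s_\epsilon)$. For (b), \cite[Theorem 4.23]{thaller} gives that $\sigma_{pp}(H^s_\epsilon)$ is finite and contained in $(-m,m)$, so I would pick $a\in(-m,m)\cap\rho(H^s_\epsilon)$; since $\ue\Phi^a\ve$ is compact (by \cite[Lemma 2]{sebaabsorption}) and $\beta$ is bounded, $\beta\ue\Phi^a\ve$ is compact, and Fredholm's alternative together with (a) forces $-1\in\rho(\beta\ue\Phi^a\ve)$. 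This establishes $(i)$ and $(ii)$.

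For the resolvent formula \eqref{Birman Shwinger scalar} I would first record the identity $\beta+\ue\Phi^a\ve=\beta\,(1+\beta\ue\Phi^a\ve)$, so that by $(i)$ the operator $\beta+\ue\Phi^a\ve$ is boundedly invertible precisely for $a\in\rho(H^s_\epsilon)$. Then one verifies \eqref{Birman Shwinger scalar} by the same direct computation as in \Cref{propo 28}: using $(H-a)\Phi^a=1$ and $\beta\Vep=\ve\beta\ue$,
\begin{align*}
(H^s_\epsilon-a)\Phi^a&=1+\ve\beta\ue\Phi^a,\\
(H^s_\epsilon-a)\big(\Phi^a\ve(\beta+\ue\Phi^a\ve)^{-1}\ue\Phi^a\big)&=\ve(\beta+\ue\Phi^a\ve)^{-1}\ue\Phi^a+\ve\beta\,\ue\Phi^a\ve\,(\beta+\ue\Phi^a\ve)^{-1}\ue\Phi^a;
\end{align*}
substituting $\ue\Phi^a\ve=(\beta+\ue\Phi^a\ve)-\beta$ in the last term and using $\beta^2=1$ (and that $\beta$ commutes with $\ve$) collapses the second display to $\ve\beta\ue\Phi^a$, so that $(H^s_\epsilon-a)$ applied to the right-hand side of \eqref{Birman Shwinger scalar} equals $1$. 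Boundedness of that right-hand side (or checking the product in the opposite order) then yields the formula.

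The main point requiring care is bookkeeping rather than analysis: $\beta$ commutes with the multiplications $\ue,\ve$ but not with the convolution $\Phi^a$, so one must push $\beta$ past $\ue$ and $\ve$ and never past $\Phi^a$, the crucial simplification being $\beta+\ue\Phi^a\ve=\beta(1+\beta\ue\Phi^a\ve)$, which converts $(\beta+\ue\Phi^a\ve)$-invertibility into the $(1+\beta\ue\Phi^a\ve)$-invertibility supplied by $(i)$. Everything else---compactness of $\ue\Phi^a\ve$, finiteness of $\sigma_{pp}$ in the spectral gap, $\sigma_{ess}(H^s_\epsilon)=\sigma(H)$---is inherited verbatim from the electrostatic case, so I do not expect any genuine obstacle.
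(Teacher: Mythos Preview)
Your proposal is correct and follows exactly the approach the paper intends: the paper's own proof of this proposition is simply the remark that it ``can be proved in the same way'' as \Cref{propo 28}, with the details left to the reader. You have supplied precisely those details, correctly tracking the extra $\beta$ via the factorization $\beta\Vep=\ve\,\beta\ue$ and the identity $\beta+\ue\Phi^a\ve=\beta(1+\beta\ue\Phi^a\ve)$.
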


\section{The main decomposition and the proof of Theorem \ref{Main theorem}}\label{s main deco}
Following the ideas in \cite{sebaklein,approximation}, the first key step to prove Theorem \ref{Main theorem} is to decompose 
 $(H^e_\epsilon -a)^{-1}$ and $(H^s_\epsilon -a)^{-1}$, using a scaling operator, in terms of  the operators $A_\epsilon(a)$, $B_\epsilon(a)$ and $C_\epsilon(a)$ introduced below (see Lemma \ref{lem mel}).

Let $\eta_0>0$ be some constant small enough to be fixed later on. In particular, we take $\eta_0$ so that \eqref{C^2 domain properties} holds for all $0<\epsilon\leq\eta_0$. Given $0<\epsilon\leq\eta_0$, define
\begin{align}
&\mathcal{I}_\epsilon:L^2(\Sigma\times (-\epsilon,\epsilon))^4\to L^2(\Omega_\epsilon)^4
\quad\text{by}\quad (\mathcal{I}_\epsilon f)(x_\S+t\nu(x_\S)):=f(x_\S,t),\\
&\mathcal{S}_\epsilon:L^2(\Sigma\times (-1,1))^4\to L^2(\Sigma\times (-\epsilon,\epsilon))^4
\quad\text{by}\quad  (\mathcal{S}_\epsilon g)(x_\S,t):=\frac{1}{\sqrt{\epsilon}}\,g\Big(x_\S,\frac{t}{\epsilon}\Big).
\end{align}
Thanks to the regularity of $\Sigma$, $\mathcal{I}_\epsilon$ is well-defined, bounded and invertible for all $0<\epsilon\leq\eta_0$ if $\eta_0$ is small enough. Note also that $\mathcal{S}_\epsilon$ is a unitary and invertible operator.

Let $0<\eta\leq\eta_0$, $V\in L^\infty(\R)$ with $\supp V\subset[-\eta,\eta]$ and $u,v\in L^\infty(\R)$ be the functions with support in $[-1,1]$ introduced in \eqref{eq u,v}, that is,
\begin{equation}\label{correc6}
u(t):=|\eta V (\eta t)|^{1/2}\quad\text{and}\quad v(t):=\sgn(V(\eta t))u(t).
\end{equation}
Using the notation related to \eqref{eqn:coaera}, for $0<\epsilon\leq\eta_0$  we consider the integral operators 
\begin{equation}\label{ABC espacios}
\begin{split}
&A_\epsilon(a):L^2(\Sigma\times(-1,1))^4\to L^2(\Rt)^4,\\
&B_\epsilon(a):L^2(\Sigma\times(-1,1))^4\to L^2(\Sigma\times(-1,1))^4,\\
&C_\epsilon(a):L^2(\Rt)^4\to L^2(\Sigma\times(-1,1))^4
\end{split}
\end{equation}
defined by
\begin{equation}\label{ABCepsilon}
\begin{split}
&(A_\epsilon(a)g)(x):=\int_{-1}^1\int_\Sigma\phi^a(x-y_\S - \epsilon s \nu (y_\S))v(s) \det(1-\epsilon s W(y_\S)) g(y_\S ,s)\,d\upsigma (y_\S)\,ds,\\
&(B_\epsilon (a)g)(x_\S ,t):= u(t)\int_{-1}^1\int_\S\phi^a (x_\S + \epsilon t \nu (x_\S) -y_\S -\epsilon s \nu (y_\S))v(s)\\
&\hskip200pt \times  \det(1-\epsilon s W(y_\S)) g(y_\S ,s)\,d\upsigma (y_\S)\,ds,\\
&(C_\epsilon(a)g)(x_\S,t):=u(t)\int_{\Rt}\phi^a(x_\S+\epsilon t\nu(x_\S)-y)g(y)\,dy.
\end{split}
\end{equation}

Recall that, given $F\in L^2(\Rt)^4$ and $f\in L^2(\upsigma)^4$, in \eqref{defi Phia} we defined $\Phi^a(F,f)$. However, in Section \ref{ss coupling Ve} we made the identification  $\Phi^a(\cdot)\equiv\Phi^a(\cdot,0)$, which enabled us to write $(H-a)^{-1}=\Phi^a$. 
Here, and in the sequel, we recover the initial definition for $\Phi^a$ given in \eqref{defi Phia} and we assume that $a\in\C\setminus\R$; now we must write $(H-a)^{-1}=\Phi^a(\cdot,0)$, which is a bounded operator in $L^2(\Rt)^4$.

Proceeding as in the proof of \cite[Lemma 3.2]{approximation}, one can show the following result.
\begin{lemma}\label{lem mel}
The following operator identities hold for all $0<\epsilon\leq\eta$:
\begin{equation}\label{correc2}
\begin{split}
&A_\epsilon(a)=\Phi^a(\cdot,0)\ve\,\mathcal{I}_\epsilon\,\mathcal{S}_\epsilon,\\
&B_\epsilon(a) =\mathcal{S}_\epsilon^{-1}\mathcal{I}_\epsilon^{-1} \ue \,\Phi^a(\cdot,0) \ve\,\mathcal{I}_\epsilon\,\mathcal{S}_\epsilon,\\
&C_\epsilon(a)=\mathcal{S}_\epsilon^{-1}\mathcal{I}_\epsilon^{-1} \ue\, \Phi^a(\cdot,0).
\end{split}
\end{equation}
Moreover, the following resolvent formulae hold:
\begin{align}\label{resolvent formula 2}
&(H^e_\epsilon -a)^{-1}
=(H-a)^{-1}+A_\epsilon(a)\big(1+B_\epsilon(a)\big)^{-1}C_\epsilon(a),\\\label{resolvent formula 2scalar}
&(H^s_\epsilon -a)^{-1}
=(H-a)^{-1}
+A_\epsilon(a)\big(\beta+B_\epsilon(a)\big)^{-1}C_\epsilon(a).
\end{align}
\end{lemma}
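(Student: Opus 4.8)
The plan is to prove \Cref{lem mel} by first establishing the three operator identities in \eqref{correc2} and then deriving the resolvent formulae \eqref{resolvent formula 2} and \eqref{resolvent formula 2scalar} from \Cref{propo 28}, \Cref{propo 28 scalar} and the unitarity of $\mathcal{S}_\epsilon$.

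\textbf{Step 1: the operator identities.} I would unwind the definitions. Fix $a\in\C\setminus\R$ and $0<\epsilon\leq\eta$. For $g\in L^2(\Sigma\times(-1,1))^4$, the composition $\ve\,\mathcal{I}_\epsilon\,\mathcal{S}_\epsilon$ acts as follows: $\mathcal{S}_\epsilon g$ rescales the normal variable and multiplies by $\epsilon^{-1/2}$, then $\mathcal{I}_\epsilon$ transplants the result to $\Omega_\epsilon$ via the normal-coordinate bijection $i_\epsilon$, and then multiplying by $\ve=\sgn(\Vep)|\Vep|^{1/2}$ recovers, after the rescaling $t\mapsto \epsilon t/\eta\cdot\eta=\dots$, precisely the factor $v(s)$ in \eqref{ABCepsilon}. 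To turn the volume integral $\int_{\Rt}\phi^a(x-y)(\ve\mathcal{I}_\epsilon\mathcal{S}_\epsilon g)(y)\,dy$ into the surface-plus-normal integral in the definition of $A_\epsilon(a)$, I would restrict the integration to $\Omega_\epsilon$ (since $\ve$ is supported there) and apply the change of variables formula \eqref{eqn:coaera} from \Cref{prop:coarea}: $y=y_\Sigma+ts\nu$... more precisely $y=i_\epsilon(y_\Sigma,t)$ with $t\in(-\epsilon,\epsilon)$, producing the Jacobian $\det(1-tW(y_\Sigma))$; then substituting $t=\epsilon s$, $dt=\epsilon\,ds$, the $\epsilon$ cancels the $\epsilon^{-1/2}$ from $\mathcal{S}_\epsilon$ against the $\epsilon^{1/2}$ hidden in $|\Vep|^{1/2}=|V_\epsilon(t)|^{1/2}=(\eta/\epsilon)^{1/2}|V(\eta t/\epsilon)|^{1/2}$, and what survives is exactly $\phi^a(x-y_\Sigma-\epsilon s\nu(y_\Sigma))v(s)\det(1-\epsilon sW(y_\Sigma))$. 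This gives $A_\epsilon(a)=\Phi^a(\cdot,0)\ve\mathcal{I}_\epsilon\mathcal{S}_\epsilon$. The identity for $B_\epsilon(a)$ follows by additionally applying $\mathcal{S}_\epsilon^{-1}\mathcal{I}_\epsilon^{-1}\ue$ on the left: $\ue$ multiplies by $u$ (again absorbing an $\epsilon^{-1/2}$ against the $\epsilon^{1/2}$ later produced by $\mathcal{S}_\epsilon^{-1}$), $\mathcal{I}_\epsilon^{-1}$ pulls the evaluation point back to $x_\Sigma+\epsilon t\nu(x_\Sigma)$ after the $\mathcal{S}_\epsilon^{-1}$ rescaling $t\mapsto \epsilon t$, yielding the kernel in the definition of $B_\epsilon(a)$. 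The identity for $C_\epsilon(a)$ is the same computation truncated: $\mathcal{S}_\epsilon^{-1}\mathcal{I}_\epsilon^{-1}\ue\Phi^a(\cdot,0)$ applied to $F\in L^2(\Rt)^4$ gives $u(t)\int_{\Rt}\phi^a(x_\Sigma+\epsilon t\nu(x_\Sigma)-y)F(y)\,dy$ directly, since $\ue$ acting on $L^2(\Omega_\epsilon)$ contributes the $|V_\epsilon|^{1/2}$ that becomes $u$ after $\mathcal{I}_\epsilon^{-1}$ and $\mathcal{S}_\epsilon^{-1}$. Throughout, one should check that all operators involved are bounded between the indicated spaces, which follows from $V\in L^\infty$, the boundedness and invertibility of $\mathcal{I}_\epsilon$ (regularity of $\Sigma$), the unitarity of $\mathcal{S}_\epsilon$, and the boundedness of $\Phi^a$.

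\textbf{Step 2: the resolvent formulae.} With \eqref{correc2} in hand, I would combine it with the Birman--Schwinger-type resolvent formula \eqref{Birman Shwinger} from \Cref{propo 28}, which (with the abuse of notation $\Phi^a\equiv\Phi^a(\cdot,0)$ of that section) reads $(H^e_\epsilon-a)^{-1}=\Phi^a-\Phi^a\ve(1+\ue\Phi^a\ve)^{-1}\ue\Phi^a$. Using $\mathcal{I}_\epsilon\mathcal{S}_\epsilon(\mathcal{I}_\epsilon\mathcal{S}_\epsilon)^{-1}=\mathrm{Id}$ and $(\mathcal{I}_\epsilon\mathcal{S}_\epsilon)^{-1}=\mathcal{S}_\epsilon^{-1}\mathcal{I}_\epsilon^{-1}$, I insert this identity into the middle term:
\begin{equation*}
\Phi^a\ve(1+\ue\Phi^a\ve)^{-1}\ue\Phi^a
=\Phi^a\ve\,\mathcal{I}_\epsilon\mathcal{S}_\epsilon\,\big(1+\mathcal{S}_\epsilon^{-1}\mathcal{I}_\epsilon^{-1}\ue\Phi^a\ve\,\mathcal{I}_\epsilon\mathcal{S}_\epsilon\big)^{-1}\,\mathcal{S}_\epsilon^{-1}\mathcal{I}_\epsilon^{-1}\ue\Phi^a,
\end{equation*}
where I used the algebraic fact that for invertible $T$ one has $T^{-1}(1+STR)^{-1}=(\text{conjugation})$, i.e. $S(1+RS)^{-1}R$-type rearrangement together with $\mathcal{S}_\epsilon^{-1}\mathcal{I}_\epsilon^{-1}(1+\ue\Phi^a\ve)\mathcal{I}_\epsilon\mathcal{S}_\epsilon=1+B_\epsilon(a)$. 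Recognizing the three blocks via \eqref{correc2} gives exactly $A_\epsilon(a)(1+B_\epsilon(a))^{-1}C_\epsilon(a)$, and since $\Phi^a=\Phi^a(\cdot,0)=(H-a)^{-1}$ the first term is $(H-a)^{-1}$, proving \eqref{resolvent formula 2}. The scalar case \eqref{resolvent formula 2scalar} is identical, starting from \eqref{Birman Shwinger scalar} and noting that $\beta$ commutes through $\mathcal{I}_\epsilon,\mathcal{S}_\epsilon$ (it acts on the spinor components, not on the space/normal variables), so $\mathcal{S}_\epsilon^{-1}\mathcal{I}_\epsilon^{-1}(\beta+\ue\Phi^a\ve)\mathcal{I}_\epsilon\mathcal{S}_\epsilon=\beta+B_\epsilon(a)$; the invertibility of $\beta+\ue\Phi^a\ve$ for the chosen $a$ follows from \Cref{propo 28 scalar}$(i)$, hence that of $\beta+B_\epsilon(a)$ by conjugation.

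\textbf{Main obstacle.} The computational heart, and the step most prone to error, is Step 1: correctly tracking the $\epsilon$-powers so that the $\epsilon^{\mp1/2}$ factors from $\mathcal{S}_\epsilon^{\pm1}$, the $(\eta/\epsilon)^{1/2}$ buried in $\ue=|\Vep|^{1/2}$ and $\ve$, and the Jacobian $\det(1-\epsilon sW)$ from the coarea-type change of variables \eqref{eqn:coaera} all land on the right side of the equation and combine into the stated kernels with the clean factor $v(s)\det(1-\epsilon sW(y_\Sigma))$ (and $u(t)$ in front). One must also be careful that the change of variables $y=i_\epsilon(y_\Sigma,t)$ is legitimate (guaranteed by \eqref{C^2 domain properties} for $\epsilon\leq\eta\leq\eta_0$) and that the support of $\ve$ inside $\overline{\Omega_\epsilon}$ is what allows the reduction of the $\Rt$-integral to an $\Omega_\epsilon$-integral. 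Once the bookkeeping in Step 1 is done, Step 2 is purely formal operator algebra built on the already-proved \Cref{propo 28} and \Cref{propo 28 scalar}, exactly as in \cite[Lemma 3.2]{approximation}.
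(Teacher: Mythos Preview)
Your proposal is correct and follows essentially the same approach as the paper, which simply refers to \cite[Lemma 3.2]{approximation} for the operator identities and then notes that \eqref{resolvent formula 2} and \eqref{resolvent formula 2scalar} follow from \eqref{Birman Shwinger} and \eqref{Birman Shwinger scalar} via the conjugation by $\mathcal{I}_\epsilon\mathcal{S}_\epsilon$. Your Step~1 bookkeeping (coarea formula, support of $\ve$, cancellation of the $\epsilon^{\pm1/2}$ factors) and your Step~2 algebraic conjugation are exactly what that reference carries out; just be mindful of the sign when matching \eqref{Birman Shwinger} to \eqref{resolvent formula 2}.
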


In \eqref{correc2}, $A_\epsilon(a)=\Phi^a(\cdot,0)\ve\,\mathcal{I}_\epsilon\,\mathcal{S}_\epsilon$ means that
$A_\epsilon(a)g=\Phi^a(\ve\,\mathcal{I}_\epsilon
\,\mathcal{S}_\epsilon\, g,0)$ for all $g\in L^2(\Sigma\times(-1,1))^4$, and similarly for $B_\epsilon(a)$ and $C_\epsilon(a)$. 
Since both $\mathcal{I}_\epsilon$ and $\mathcal{S}_\epsilon$ are an isometry, $V\in L^\infty(\R)$ is supported in $[-\eta,\eta]$ and $\Phi^a(\cdot,0)$ is bounded by assumption, from \eqref{correc2} we deduce that $A_\epsilon(a)$, $B_\epsilon(a)$ and $C_\epsilon(a)$ are 
well-defined and bounded, so \eqref{ABC espacios} is fully justified.
Once \eqref{correc2} is proved, the resolvent formulae \eqref{resolvent formula 2} and \eqref{resolvent formula 2scalar} follow from \eqref{Birman Shwinger} and \eqref{Birman Shwinger scalar}, respectively. We stress that, in \eqref{Birman Shwinger} and \eqref{Birman Shwinger scalar}, there is the abuse of notation in the definition of $\Phi^a$ commented before.

Lemma \ref{lem mel} connects $(H^e_\epsilon -a)^{-1}$ and $(H^s_\epsilon -a)^{-1}$ to $A_\epsilon(a)$, $B_\epsilon(a)$ and $C_\epsilon(a)$. When $\epsilon\to0$, the limit of the former ones is also connected to the limit of the latter ones. We now introduce those limit operators for $A_\epsilon(a)$, $B_\epsilon(a)$ and $C_\epsilon(a)$ when $\epsilon\to0$.
Let 
\begin{equation}\label{ABC espacios2}
\begin{split}
&A_0(a) : L^2(\Sigma\times (-1,1))^4\to L^2(\Rt)^4,\\
&B_0(a) : L^2(\Sigma\times(-1,1))^4\to L^2(\Sigma\times(-1,1))^4,\\
&B': L^2(\Sigma\times(-1,1))^4\to L^2(\Sigma\times(-1,1))^4,\\
&C_0(a):L^2(\Rt)^4\to L^2(\Sigma\times (-1,1))^4
\end{split}
\end{equation}
be the operators given by
\begin{equation}\label{limit operators defi}
\begin{split}
&(A_0(a) g)(x):= \int_{-1}^1 \int_\Sigma\phi^a(x-y_\Sigma)v(s)g(y_\Sigma,s)\,d\upsigma(y_\Sigma)\,ds,\\
&(B_0(a) g)(x_\S,t):=\lim_{\epsilon\to 0}u(t)\int_{-1}^1\int_{|x_\S-y_\S|>\epsilon}\phi^a (x_\S  -y_\S )v(s) g(y_\S ,s)\,d\upsigma (y_\S)\,ds,\\
&(B'g)(x_\S,t):=(\alpha\cdot \nu(x_\S))\,\frac{i}{2}\,u(t)\int_{-1}^1 \sgn(t-s)v(s) g(x_\S,s)\,ds,\\
&(C_0(a) g)(x_\Sigma,t):=u(t)\int_{\Rt}\phi^a(x_\Sigma-y)g(y)\,dy.
\end{split}
\end{equation}

The next theorem corresponds to the core of this article. Its proof is quite technical and is carried out in  Sections \ref{ss C}, \ref{ss B} and \ref{ss A}. 
We also postpone the proof of \eqref{ABC espacios2} to those sections, where each operator is studied in detail. Anyway, the boundedness of $B'$ is trivial. 

\begin{theorem}\label{conv AB th}
The following convergences of operators hold in the strong sense:
\begin{eqnarray}
&&A_\epsilon(a)\to A_0(a)\quad\text{when }\epsilon\to0,\label{convergence A}\\  
&&B_\epsilon(a)\to B_0(a)+B'\quad\text{when }\epsilon\to0,\label{conv B th}
\\
&&C_\epsilon(a)\to C_0(a)\quad\text{when }\epsilon\to0.\label{convergence C}
\end{eqnarray}
\end{theorem}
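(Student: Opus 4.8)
The plan is to prove the three strong convergences separately, treating $C_\epsilon(a)\to C_0(a)$ and $A_\epsilon(a)\to A_0(a)$ as the ``easy'' parts and $B_\epsilon(a)\to B_0(a)+B'$ as the crux. For all three I would first establish the convergence on a dense subclass of test functions — say $g\in C_c(\Sigma\times(-1,1))^4$ for the domains $L^2(\Sigma\times(-1,1))^4$, or $G\in C_c(\Rt)^4$ for $L^2(\Rt)^4$ — and then upgrade to all of $L^2$ using the uniform (in $\epsilon$) boundedness of the operators, which follows from the identities \eqref{correc2} together with the boundedness of $\Phi^a(\cdot,0)$, the unitarity of $\mathcal S_\epsilon$, and the uniform bound on $\mathcal I_\epsilon$ from the $C^2$-regularity of $\Sigma$. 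A standard $3\epsilon$-argument then gives strong convergence on the whole space.

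For \eqref{convergence C}: fixing $G\in C_c(\Rt)^4$, one must compare $u(t)\int_{\Rt}\phi^a(x_\Sigma+\epsilon t\nu(x_\Sigma)-y)G(y)\,dy$ with $u(t)\int_{\Rt}\phi^a(x_\Sigma-y)G(y)\,dy$. Since $\phi^a\in L^1_{loc}(\Rt)$ with the singularity $|x|^{-2}$ integrable in $\R^3$, the map $x\mapsto (\phi^a * G)(x)$ is continuous, so the integrand converges pointwise in $(x_\Sigma,t)$; dominating by an $L^2(\Sigma\times(-1,1))$ function (uniformly in small $\epsilon$, using that $x_\Sigma+\epsilon t\nu(x_\Sigma)$ stays in a fixed compact neighbourhood and the $L^1_{loc}+L^\infty$ splitting of $\phi^a$) and invoking dominated convergence finishes this case. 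For \eqref{convergence A}, test against $g\in C_c(\Sigma\times(-1,1))^4$ and compare the kernel $\phi^a(x-y_\Sigma-\epsilon s\nu(y_\Sigma))\det(1-\epsilon s W(y_\Sigma))$ with $\phi^a(x-y_\Sigma)$: the determinant tends to $1$ uniformly by \Cref{weingarten map}, and for a.e.\ fixed $x$ the remaining integral over $\Sigma\times(-1,1)$ converges by dominated convergence (again using local integrability of $\phi^a$ on $\R^3$, now against the surface measure $\upsigma$, which is controlled by \Cref{2d AD regularity}); an extra dominated-convergence step in $x\in\Rt$ gives $L^2(\Rt)^4$-convergence.

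The hard part is \eqref{conv B th}, and this is where the difference with the non-relativistic setting of \cite{approximation} bites. The kernel of $B_\epsilon(a)$ is evaluated at the argument $(x_\Sigma-y_\Sigma)+\epsilon(t\nu(x_\Sigma)-s\nu(y_\Sigma))$, and the dangerous term in $\phi^a$ is the one behaving like $z/|z|^3$, which is not locally integrable on $\Sigma$ and produces a genuine singular integral operator in the tangential variables. The strategy is to split $\phi^a=\phi^a_{\mathrm{sing}}+\phi^a_{\mathrm{reg}}$, where $\phi^a_{\mathrm{sing}}(z)=\frac{i}{4\pi}\,\alpha\cdot\frac{z}{|z|^3}$ carries the leading singularity and $\phi^a_{\mathrm{reg}}\in L^1_{loc}$ against $\upsigma$. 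The regular part is handled exactly as in \eqref{convergence A}: the $\epsilon$-perturbation of the argument and the determinant can be removed by dominated convergence, yielding the $B_0(a)$ piece (whose $\lim$ in \eqref{limit operators defi} is precisely the principal-value integral that survives). For the singular part one must show that the normal component of the perturbation, $\epsilon(t\nu(x_\Sigma)-s\nu(y_\Sigma))$, is what generates the extra ``jump'' term $B'$: writing $z=z_\parallel+z_\perp$ with $z_\perp\approx\epsilon(t-s)\nu(x_\Sigma)$ near the diagonal, one uses the elementary identity that, as $\epsilon\to0$, $\int_{|x_\Sigma-y_\Sigma|<R}\frac{\alpha\cdot(\,\cdot\,)}{|(x_\Sigma-y_\Sigma)+\epsilon(t-s)\nu|^3}\,d\upsigma(y_\Sigma)$ concentrates and converges to $\pm\frac{2\pi}{|\cdot|}(\alpha\cdot\nu(x_\Sigma))\,\sgn(t-s)$ — the classical computation behind the Plemelj--Sokhotski jump formula \eqref{Plemelj jump formulae}. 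Integrating against $v(s)g(x_\Sigma,s)$ and carrying the leftover tangential factor by a dominated-convergence / standard Calder\'on--Zygmund continuity argument (controlled by the smallness of $\|V\|_{L^1}$ if needed) produces exactly $(\alpha\cdot\nu(x_\Sigma))\frac{i}{2}u(t)\int_{-1}^1\sgn(t-s)v(s)g(x_\Sigma,s)\,ds=(B'g)(x_\Sigma,t)$. The main technical obstacle is making this ``concentration of the singular kernel under a normal shift'' rigorous uniformly over $x_\Sigma\in\Sigma$ and $t\in(-1,1)$, i.e.\ interchanging the $\epsilon\to0$ limit with the $\upsigma$-integration in the presence of a non-integrable kernel; this is presumably why the authors only obtain strong (not norm) resolvent convergence and why the detailed proof is deferred to \Cref{ss B}.
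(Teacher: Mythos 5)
Your overall roadmap — split $\phi^a$ into a regular part plus the Riesz-type singular part $\omega_3(x)=\tfrac{i}{4\pi}\,\alpha\cdot x/|x|^3$, treat the regular pieces by dominated convergence, and identify the jump term $B'$ by a Plemelj--Sokhotski-type concentration of the singular kernel under the normal shift — is the same decomposition the paper uses (cf.\ \eqref{eqn:break phi} and \eqref{eqn:break phi2}). The trouble is that what you label ``the main technical obstacle'' is not a loose end to be deferred: it \emph{is} the proof, and what you offer in its place (``standard Calder\'on--Zygmund continuity argument, controlled by the smallness of $\|V\|_{L^1}$ if needed'') does not fill the gap. Two concrete problems. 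First, $B_{\epsilon,\omega_3}$ is not a standard $\epsilon$-truncation of a singular integral: its kernel is shifted in the \emph{normal} direction, $x_\Sigma+\epsilon t\nu(x_\Sigma)-y_\Sigma-\epsilon s\nu(y_\Sigma)$, not cut off at $|x_\Sigma-y_\Sigma|>\epsilon$. To run a density$+$domination argument you need a pointwise bound of the form $\sup_\epsilon|B_{\epsilon,\omega_3}g(x_\Sigma,t)|\leq$ (something in $L^2(\Sigma\times(-1,1))$), and obtaining that requires the specific four-fold splitting of the kernel in \eqref{witb 1} that reduces the shifted kernel to the Hardy--Littlewood maximal function $\witm_*$ and the genuine maximal singular integral $\witt_*$ from \eqref{max hardy sio}; this is the content of Section \ref{meB}, and once it is in hand one still needs the Calder\'on-type transfer principle (Lemma \ref{Calderon lemma}) to pass from a.e.\ pointwise convergence on a dense class to strong $L^2$-convergence. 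A bare appeal to ``uniform (in $\epsilon$) boundedness of $B_\epsilon$ plus a $3\epsilon$-argument'' does not work: uniform \emph{operator} norm bounds are not enough, because on your dense class you only have a.e.\ pointwise convergence, not $L^2$-convergence, until you have an $L^2$ maximal bound to dominate with.

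Second, your claim that the smallness of $V$ might be invoked here is a misconception worth correcting: the $(\delta,\eta)$-smallness plays no role in proving Theorem \ref{conv AB th}. It enters only in Corollary \ref{convergence main}, via \eqref{proof colo 4}, to guarantee $\|B_\epsilon(a)\|<1$ and hence the uniform invertibility of $1+B_\epsilon(a)$. Similarly, for \eqref{convergence A} the ``extra dominated-convergence step in $x\in\Rt$'' you invoke silently presupposes an $L^2(\Rt)$-dominating function for $\sup_\epsilon|A_{\epsilon,\omega_3}g(x)|$; this is nontrivial precisely because the bound blows up as $x\to\Sigma$, and the paper again has to build it from the maximal operators $\witm_*,\witt_*$ acting after projecting to $\Sigma$ (Section \ref{meA}), splitting $x\in\Omega_{4\epsilon}$ from $x\in\Omega_{\eta_0}\setminus\Omega_{4\epsilon}$. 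Finally, two smaller points: the dense class must be chosen Lipschitz in $x_\Sigma$ (not merely $C_c$) so that the near-diagonal remainder $\mathscr B_{\epsilon,\delta}$ in the divergence-theorem computation of the jump can be made small; and your jump constant ``$\pm\tfrac{2\pi}{|\cdot|}(\alpha\cdot\nu)\sgn(t-s)$'' is garbled — after the $(4\pi)^{-1}$ normalisation the concentration gives $\pm\tfrac{1}{2}\sgn(t-s)(\alpha\cdot\nu(x_\Sigma))$, which is what produces the factor $\tfrac{i}{2}$ in $B'$. The paper in fact does not cite \eqref{Plemelj jump formulae} directly but re-derives the jump from scratch via the divergence theorem on the $\epsilon$-dependent region $D_\delta^\epsilon(t,s)$, splitting $k=\nabla E$ into a normal-derivative part (which jumps) and tangential-derivative parts (which do not), precisely because the shifted geometry does not immediately fall under the hypotheses of the boundary-trace formula.
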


The proof of the following corollary is also postponed to Section \ref{s proof corol}. It combines Theorem \ref{conv AB th}, \eqref{resolvent formula 2} and \eqref{resolvent formula 2scalar}, but it requires some fine estimates developed in Sections \ref{ss C}, \ref{ss B} and \ref{ss A}. 

\begin{corollary}\label{convergence main}
There exist $\eta_0,\,\delta>0$ small enough only depending on $\S$ such that, for any $a\in\C\setminus\R$ with $|a|\leq1$, $0<\eta\leq\eta_0$ and $(\delta,\eta)$-small $V$ (see {\em Definition \ref{deltasmall}}), the following convergences of operators hold in the strong sense:
\begin{align}
&(H+\Vep-a)^{-1}\to 
(H-a)^{-1}+A_0(a)\big(1+B_0(a)+B'\big)^{-1}C_0(a)\quad\text{when }\epsilon\to0,\\
&(H+\beta\Vep-a)^{-1}\to 
(H-a)^{-1}+A_0(a)\big(\beta+B_0(a)+B'\big)^{-1}C_0(a)\quad\text{when }\epsilon\to0.
\end{align}
In particular, $(1+B_0(a)+B'\big)^{-1}$ and $(\beta+B_0(a)+B'\big)^{-1}$ are well-defined bounded operators in $L^2(\Sigma\times(-1,1))^4$.
\end{corollary}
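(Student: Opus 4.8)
The plan is to combine the strong operator convergences from Theorem \ref{conv AB th} with the resolvent formulae \eqref{resolvent formula 2} and \eqref{resolvent formula 2scalar}, so the heart of the matter is to upgrade pointwise/strong convergence of $A_\epsilon(a)$, $B_\epsilon(a)$, $C_\epsilon(a)$ to strong convergence of the compositions $A_\epsilon(a)(1+B_\epsilon(a))^{-1}C_\epsilon(a)$. The obvious obstruction is that strong convergence does not pass through products nor through inversion for free: one needs uniform boundedness of the middle factor $(1+B_\epsilon(a))^{-1}$ (resp. $(\beta+B_\epsilon(a))^{-1}$), uniformly in $0<\epsilon\le\eta_0$. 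This is precisely where the $(\delta,\eta)$-smallness of $V$ must be exploited. So the first step is: use \eqref{correc2}, the fact that $\mathcal{I}_\epsilon,\mathcal{S}_\epsilon$ are (essentially) isometries, and a quantitative bound on $\ue\Phi^a(\cdot,0)\ve$ to show that $\|B_\epsilon(a)\|_{L^2(\Sigma\times(-1,1))^4\to L^2(\Sigma\times(-1,1))^4}\le C\|V\|_{L^1(\R)}\le 2C\delta$, uniformly in $\epsilon$, provided $|a|\le 1$, $a\in\C\setminus\R$ and $\eta\le\eta_0$; here the estimate on $\ue\Phi^a\ve$ will be the one developed in the later sections (the analogue of the Hilbert–Schmidt-type bound used for $\K_V$), and it is crucial that it does not blow up as $\epsilon\to0$ despite the $x/|x|^3$ singularity discussed in the introduction — smallness of $V$ absorbs the singular integral operator norm. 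Taking $\delta$ small enough that $2C\delta<1/2$, a Neumann series then gives $\|(1+B_\epsilon(a))^{-1}\|\le 2$ uniformly in $\epsilon$, and the same argument applied to $B'$ and $B_0(a)$ (whose norms are controlled by the same constant times $\|V\|_{L^1}$) gives that $1+B_0(a)+B'$ is invertible with $\|(1+B_0(a)+B')^{-1}\|\le 2$.

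With uniform invertibility in hand, the second step is a standard algebraic manipulation: write
\begin{equation}
(1+B_\epsilon(a))^{-1}-(1+B_0(a)+B')^{-1}
=-(1+B_\epsilon(a))^{-1}\big(B_\epsilon(a)-B_0(a)-B'\big)(1+B_0(a)+B')^{-1}.
\end{equation}
Applied to a fixed $g\in L^2(\Sigma\times(-1,1))^4$, the right-hand side is the product of a uniformly bounded operator, the operator $B_\epsilon(a)-B_0(a)-B'$ applied to the fixed vector $(1+B_0(a)+B')^{-1}g$, and hence converges to $0$ in norm by \eqref{conv B th}. Therefore $(1+B_\epsilon(a))^{-1}\to(1+B_0(a)+B')^{-1}$ strongly. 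The third step handles the triple product: for fixed $F\in L^2(\Rt)^4$, decompose
\begin{align}
A_\epsilon(a)(1+B_\epsilon(a))^{-1}C_\epsilon(a)F
-A_0(a)(1+B_0(a)+B')^{-1}C_0(a)F
&=A_\epsilon(a)(1+B_\epsilon(a))^{-1}\big(C_\epsilon(a)-C_0(a)\big)F\nonumber\\
&\quad+A_\epsilon(a)\big((1+B_\epsilon(a))^{-1}-(1+B_0(a)+B')^{-1}\big)C_0(a)F\nonumber\\
&\quad+\big(A_\epsilon(a)-A_0(a)\big)(1+B_0(a)+B')^{-1}C_0(a)F.
\end{align}
Each of the three terms tends to $0$: the first because $A_\epsilon(a)$ and $(1+B_\epsilon(a))^{-1}$ are uniformly bounded (the former by \eqref{correc2} and the isometry properties, uniformly in $\epsilon$) while $(C_\epsilon(a)-C_0(a))F\to0$ by \eqref{convergence C}; the second because $A_\epsilon(a)$ is uniformly bounded and $(1+B_\epsilon(a))^{-1}-(1+B_0(a)+B')^{-1}$ applied to the fixed vector $C_0(a)F$ tends to $0$ by the second step; the third directly by \eqref{convergence A} applied to the fixed vector $(1+B_0(a)+B')^{-1}C_0(a)F$. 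Combining with \eqref{resolvent formula 2} gives the first displayed convergence of the corollary, and the Lorentz scalar case is identical, replacing $1+B_\epsilon(a)$ by $\beta+B_\epsilon(a)$ and $1+B_0(a)+B'$ by $\beta+B_0(a)+B'$, using that $\beta$ is unitary so $\|(\beta+B_\epsilon(a))^{-1}\|=\|(1+\beta B_\epsilon(a))^{-1}\|\le 2$ by the same Neumann series.

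Finally, the assertion that $(1+B_0(a)+B')^{-1}$ and $(\beta+B_0(a)+B')^{-1}$ are bounded on $L^2(\Sigma\times(-1,1))^4$ is already obtained in the first step as a byproduct of the smallness estimate, so nothing extra is needed there. I expect the genuine obstacle to be entirely contained in the first step: establishing the $\epsilon$-uniform operator norm bound $\|B_\epsilon(a)\|\lesssim\|V\|_{L^1(\R)}$ despite the non-integrable $x/|x|^3$ singularity of $\phi^a$ in the two tangential dimensions. This requires the Calderón–Zygmund / singular-integral estimates proved in Sections \ref{ss C}, \ref{ss B}, \ref{ss A}, together with the growth bounds on $\upsigma_t$ from Lemma \ref{2d AD regularity}, to see that the relevant operator norm is controlled by $\|u\|_{L^2}\|v\|_{L^2}=\|V\|_{L^1(\R)}$ up to a constant depending only on $\S$; everything else is soft functional analysis as sketched above.
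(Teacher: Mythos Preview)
Your approach is essentially the same as the paper's: use the smallness of $V$ to make $\|B_\epsilon(a)\|<1$ uniformly in $\epsilon$, invert by Neumann series, then pass to the limit via the resolvent identity for inverses together with Theorem \ref{conv AB th}. The paper states the last composition step more tersely (``the composition of strongly convergent operators is strongly convergent''), whereas your three-term telescoping makes the required uniform boundedness explicit; both are fine.

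One correction on the quantitative bound in your first step. The uniform estimate on $B_\epsilon(a)$ that actually comes out of Sections \ref{ss C}--\ref{ss A} (see in particular \eqref{remark eq1_} and the analogous bounds for $B_{\epsilon,\omega_1^a}$, $B_{\epsilon,\omega_2^a}$, collected in Lemma \ref{REM}) is
\[
\|B_\epsilon(a)\|_{L^2(\Sigma\times(-1,1))^4\to L^2(\Sigma\times(-1,1))^4}
\le C\,\|u\|_{L^\infty(\R)}\|v\|_{L^\infty(\R)}
= C\,\eta\|V\|_{L^\infty(\R)}\le C\delta,
\]
not $C\|u\|_{L^2}\|v\|_{L^2}=C\|V\|_{L^1(\R)}$. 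The singular-integral part $B_{\epsilon,\omega_3}$ is controlled through maximal Calder\'on--Zygmund operators, and in those estimates $u$ and $v$ enter only as $L^\infty$ multipliers; no Hilbert--Schmidt-type $L^2$ bound on $u,v$ is available here because the kernel $x/|x|^3$ is not in $L^2(\Sigma\times\Sigma)$. Since $(\delta,\eta)$-smallness controls both $\eta\|V\|_{L^\infty}$ and $\|V\|_{L^1}$ by $\delta$ (resp.\ $2\delta$), your conclusion is unaffected, but the mechanism you propose for the bound is not the one that works.
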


\subsection{Proof of Theorem \ref{Main theorem}}\label{s2 ss1}
\mbox{}

Thanks to \cite[Theorem VIII.19]{reedsimon1}, to prove the theorem it is enough to show that, for some $a\in\C\setminus\R$, the following convergences of operators hold in the strong sense:
\begin{align}\label{main eq*1}
&(H+\Vep-a)^{-1}\to(H+\lambda_e\delta_\Sigma-a)^{-1}\quad\text{when }\epsilon\to0,\\\label{main eq*2}
&(H+\beta\Vep-a)^{-1}\to(H+\lambda_s\beta\delta_\Sigma-a)^{-1}\quad\text{when }\epsilon\to0.
\end{align}
Thus, from now on, we fix $a\in\C\setminus\R$ with $|a|\leq1$.

We introduce the operators \begin{equation}\widehat{V}:  L^2(\Sigma\times (-1,1))^4\to  L^2(\Sigma)^4\quad\text{and}\quad\widehat{U}: L^2(\Sigma)^4\to  L^2(\Sigma\times (-1,1))^4\end{equation} given by
\[
\widehat{V}f(x_\Sigma):=\int_{-1}^1 v(s)\,f(x_\Sigma , s) \, ds 
\quad\text{and}\quad
\widehat{U}f(x_\Sigma , t):=u(t)\,f(x_\Sigma).
\]
Observe that, by Fubini's theorem,
\begin{equation}\label{ABC_0 aa}
A_0(a) = \Phi^a(0,\cdot)\widehat{V},\qquad
B_0(a)=\widehat{U}{C^a_\upsigma}\widehat{V},\qquad
C_0(a)=\widehat{U}\Phi^a_\upsigma.
\end{equation}
Hence, from \Cref{convergence main}  and \eqref{ABC_0 aa} we deduce that, in the strong sense,
\begin{align}\label{eq final}
&(H+\Vep-a)^{-1}\to (H-a)^{-1}+\Phi^a(0,\cdot) \widehat{V}\big(1+\widehat{U}C_\upsigma^a\widehat{V}+B'\big)^{-1}
\widehat{U}\Phi^a_\upsigma\quad\text{when }\epsilon\to0,\\ \label{eq final'}
&(H+\beta\Vep-a)^{-1}\to (H-a)^{-1}+\Phi^a(0,\cdot) \widehat{V}\big(\beta+\widehat{U}C_\upsigma^a
\widehat{V}+B'\big)^{-1}\widehat{U}\Phi^a_\upsigma\quad\text{when }\epsilon\to0.
\end{align}

For convinience of notation, set 
\begin{equation}
\widetilde{\K}g(x_\S,t):=\K_V(g(x_\S,\cdot))(t)\quad\text{ for $g\in L^2(\S\times(-1,1))$,}
\end{equation}
where $\K_V$ is as in \eqref{correc3}. Then, we get
\begin{equation}
1+ B'=\mathbb{I}_4+(\alpha\cdot\nu) \widetilde{\K}\mathbb{I}_4=\left(\begin{matrix}
\mathbb{I}_2 & (\sigma \cdot\nu) \widetilde{\K}\mathbb{I}_2\\
(\sigma \cdot\nu) \widetilde{\K}\mathbb{I}_2 & \mathbb{I}_2
\end{matrix}\right).
\end{equation}
Here, $\sigma:=(\sigma_1,\sigma_2,\sigma_3)$ (see \eqref{paulimatrices}), $\mathbb{I}_4$ denotes the $4\times4$ identity matrix and $\widetilde{\K}\mathbb{I}_4$ denotes the diagonal $4\times4$ operator matrix whose nontrivial entries are $\widetilde{\K}$, and analogously for $\widetilde{\K}\mathbb{I}_2$. 
Since the operators that compose the matrix $1+B'$ commute, if we set $\K:=\widetilde{\K}\mathbb{I}_4$, we get
\begin{equation}\label{correc4}
\begin{split}
(1+B')^{-1}&=(1-\widetilde{\K}^2)^{-1}\otimes\left(\begin{matrix}
\mathbb{I}_2 & -(\sigma \cdot\nu) \widetilde{\K}\mathbb{I}_2\\
-(\sigma \cdot\nu) \widetilde{\K}\mathbb{I}_2 & \mathbb{I}_2
\end{matrix}\right)\\
&=(1-\K^2)^{-1}-(\alpha\cdot \nu) (1-\K^2)^{-1} \K.
\end{split}
\end{equation}
With this at hand, we can compute
\begin{equation}\label{final eq1}
\begin{split}
(1+\widehat{U}C^a_\upsigma \widehat{V}+B')^{-1}
&=\Big(1+(1+B')^{-1}\widehat{U}C_\upsigma^a\widehat{V}\Big)^{-1}(1+B')^{-1}\\
&=\Big(1+(1-\K^2)^{-1}
\widehat{U}C_\upsigma^a\widehat{V}-(\alpha\cdot\nu) 
(1-\K^2)^{-1}\K \widehat{U}C_\upsigma^a\widehat{V}\Big)^{-1}\\
&\hskip120pt\circ\Big((1-\K^2)^{-1}-(\alpha \cdot\nu) 
(1-\K^2)^{-1} \K\Big).
\end{split}
\end{equation}
Note that
\begin{equation}
\begin{split}
\widehat{V}\Big(1+(1-\K^2)^{-1}\widehat{U}C^a_\upsigma&
\widehat{V}-(\alpha\cdot\nu) (1-\K^2)^{-1}\K \widehat{U}C^a_\upsigma\widehat{V}\Big)\\
&=\Big(1+\widehat{V}(1-\K^2)^{-1}\widehat{U} C^a_\upsigma-(\alpha\cdot\nu) \widehat{V}(1-\K^2)^{-1}\K\widehat{U}C_\upsigma^a\Big){\widehat{V}},
\end{split}
\end{equation}
which obviously yields
\begin{equation}\label{final eq2}
\begin{split}
\widehat{V}\Big(1+(1-\K^2)^{-1}\widehat{U}C^a_\upsigma&
\widehat{V}-(\alpha\cdot\nu) (1-\K^2)^{-1}\K \widehat{U}C^a_\upsigma\widehat{V}\Big)^{-1}\\
&=\Big(1+\widehat{V}(1-\K^2)^{-1}\widehat{U} C^a_\upsigma-(\alpha\cdot\nu) \widehat{V}(1-\K^2)^{-1}\K\widehat{U}C_\upsigma^a\Big)^{-1}{\widehat{V}}.
\end{split}
\end{equation}
Besides, by the definition of $\K_V$ in \eqref{correc3}, we see that
\begin{equation}\label{final eq3}
\begin{split}
\widehat{V}(1-\K^2)^{-1}\widehat{U}&
=\Big({\int_\R\!v\,(1-\K_V^2)^{-1}u}\Big)\mathbb{I}_4,\\
\widehat{V}(1-\K^2)^{-1}\K\widehat{U}&
=\Big({\int_\R\!v\,(1-\K_V^2)^{-1}\K_V u}\Big)\mathbb{I}_4.
\end{split}
\end{equation}
From \eqref{def lambda elec} in Theorem \ref{Main theorem}, $\lambda_e=\int_\R\!v\,(1-\K_V^2)^{-1}u$. Observe also that $\int_\R\!v\,(1-\K_V^2)^{-1}\K_V u=0$. Hence, combining \eqref{final eq2} and \eqref{final eq3} we have that
\begin{equation}\label{final eq4}
\widehat{V}\Big(1+(1-\K^2)^{-1}
\widehat{U}C^a_\upsigma\widehat{V}-(\alpha\cdot\nu) (1-\K^2)^{-1}\K \widehat{U}C^a_\upsigma\widehat{V}\Big)^{-1}=(1+\lambda_e C_\sigma^a)^{-1}\widehat{V}.
\end{equation} 
Then, from \eqref{final eq1}, \eqref{final eq4} and \eqref{final eq3}, we finally get
\[
\Phi^a(0,\cdot)\widehat{V}(1+\widehat{U}C_\upsigma^a\widehat{V}+B')^{-1}\widehat{U}\Phi^a_\upsigma
= \Phi^a(0,\cdot)(1+\lambda_e C_\upsigma^a)^{-1} \lambda_e \Phi^a_\upsigma.
\]
This last identity combined with \eqref{eq final} and \eqref{resolvent H+lambda delta} yields \eqref{main eq*1}.

The proof of \eqref{main eq*2} follows the same lines. Similarly to \eqref{correc4}, 
\begin{equation}
(\beta+B')^{-1}=(1+\K^2)^{-1}\beta-(\alpha\cdot\nu)(1+\K^2)^{-1}.
\end{equation}
One can then make the computations analogous to  \eqref{final eq1}, \eqref{final eq2}, \eqref{final eq3} and \eqref{final eq4}. Since $\lambda_s=\int_\R\!v\,(1+\K_V^2)^{-1}u$, we now get
\[
\Phi^a(0,\cdot)\widehat{V}(\beta+\widehat{U}C_\upsigma^a\widehat{V}+B')^{-1}\widehat{U}\Phi^a_\upsigma
= \Phi^a(0,\cdot)(\beta+\lambda_s C_\upsigma^a)^{-1} \lambda_s \Phi^a_\upsigma.
\]
From this, \eqref{eq final'} and \eqref{resolvent H+lambda beta delta} we obtain \eqref{main eq*2}. This finishes the proof of \Cref{Main theorem}, except for the boundedness stated in \eqref{ABC espacios2}, the proof of Corollary \ref{convergence main} in Section \ref{s proof corol}, and Theorem \ref{conv AB th}, whose proof is fragmented as follows:  \eqref{convergence A} in Section \ref{ss A}, \eqref{conv B th} in Section \ref{ss B} and \eqref{convergence C} in Section \ref{ss C}.

\section{Proof of \eqref{convergence C}: $C_\epsilon (a)\to C_0(a)$ in the strong sense when $\epsilon\to0$} \label{ss C}
Recall from \eqref{ABCepsilon} and \eqref{limit operators defi} that $C_\epsilon(a)$  with $0<\epsilon\leq\eta_0$ and $C_0(a)$ are defined  by 
\begin{equation}
\begin{split}
&(C_\epsilon(a)g)(x_\S,t)=u(t)\int_{\Rt}\phi^a(x_\S+\epsilon t\nu(x_\S)-y)g(y)\,dy,\\
&(C_0(a)g)(x_\S,t)=u(t)\int_{\Rt}\phi^a(x_\S-y)g(y)\,dy.
\end{split}
\end{equation}
Let us first show that $C_\epsilon(a)$ is bounded from $L^2(\Rt)^4$ to $L^2(\Sigma\times(-1,1))^4$ with a norm uniformly bounded on $0\leq\epsilon\leq\eta_0$.
For this purpose, we write 
\begin{equation}\label{trace Sobolev 1}
(C_\epsilon(a)g)(x_\S,t)=u(t)(\phi^a*g)(x_\S+\epsilon t\nu(x_\S)),
\end{equation} 
where $\phi^a*g$ denotes the convolution of the matrix-valued function $\phi^a$ with the vector-valued function $g\in L^2(\Rt)^4$. 
Since we are assuming that $a\in\C\setminus\R$ and, in the definition of $\phi^a$, we are taking $\sqrt{m^2-a^2}$ with positive real part, the same arguments as the ones in the proof of \cite[Lemma 2.8]{amv1} (essentially Plancherel's theorem) show that 
\begin{equation}\|\phi^a*g\|_{H^1(\Rt)^4}
\leq C\|g\|_{L^2(\Rt)^4}\quad\text{for all }g\in L^2(\Rt)^4,\end{equation}
where $C>0$ only depends on $a$. Besides, thanks to the $C^2$ regularity of $\S$, if $\eta_0$ is small enough it is not hard to show that the Sobolev trace inequality from $H^1(\Rt)^4$ to $L^2(\S_{\epsilon t})^4$ holds for all $0\leq\epsilon\leq\eta_0$ and $t\in[-1,1]$ with a constant only depending on $\eta_0$ (and $\S$, of course). Combining these two facts, we obtain that 
\begin{equation}\label{trace Sobolev}
\|\phi^a*g\|_{L^2(\S_{\epsilon t})^4}
\leq C\|g\|_{L^2(\Rt)^4}\quad\text{for all $g\in L^2(\Rt)^4$, $0\leq\epsilon\leq\eta_0$ and $t\in[-1,1]$}.
\end{equation}

By Proposition \ref{weingarten map}, if $\eta_0$ is small enough there exists $C>0$ such that 
\begin{equation}\label{trace Sobolev 2}
C^{-1}\leq\det(1-\epsilon t W(P_\S x))\leq C\quad\text{for all $0<\epsilon\leq\eta_0$, $t\in(-1,1)$ and  $x\in\S_{\epsilon t}$}. 
\end{equation}
Therefore, an application of \eqref{trace Sobolev 1}, \eqref{eqn:coaera2}, \eqref{trace Sobolev 2} and \eqref{trace Sobolev} finally yields
\begin{equation}
\begin{split}
\|C_\epsilon(a)g\|^2_{L^2(\Sigma\times(-1,1))^4}
&=\int_{-1}^1\int_\S\big|u(t)(\phi^a*g)(x_\S+\epsilon t\nu(x_\S))\big|^2d\upsigma(x_\S)\,dt\\
&\leq\|u\|_{L^\infty(\R)}^2\int_{-1}^1\int_{\S_{\epsilon t}}
\big|\det(1-\epsilon t W(P_\S x))^{-1/2}(\phi^a*g)(x)\big|^2d\upsigma_{\epsilon t}(x)\,dt\\
&\leq C\|u\|_{L^\infty(\R)}^2\int_{-1}^1
\|\phi^a*g\|_{L^2(\S_{\epsilon t})^4}^2\,dt
\leq C\|u\|_{L^\infty(\R)}^2
\|g\|_{L^2(\Rt)^4}^2.
\end{split}
\end{equation}
That is, if $\eta_0$ is small enough there exists $C_1>0$ only depending  on $\eta_0$ and $a$ such that 
\begin{equation}\label{unif estimate Cepsilon}
\|C_\epsilon(a)\|_{L^2(\Rt)^4\to L^2(\Sigma\times(-1,1))^4}
\leq C_1\|u\|_{L^\infty(\R)}
\quad\text{for all $0\leq\epsilon\leq\eta_0$.}
\end{equation}
In particular, the boundedness stated in \eqref{ABC espacios2} holds for $C_0(a)$.

In order to prove the strong convergence of $C_\epsilon(a)$ to $C_0(a)$ when $\epsilon\to0$, fix $g\in L^2(\Rt)^4$. We must show that, given $\delta>0$, there exists $\epsilon_0>0$ such that 
\begin{equation}\label{case C eq0}
\|C_\epsilon(a)g-C_0(a)g\|_{L^2(\Sigma\times(-1,1))^4}
\leq\delta\quad\text{for all }0\leq\epsilon\leq\epsilon_0.
\end{equation}
For every $0<d\leq\eta_0$, using \eqref{unif estimate Cepsilon} we can estimate 
\begin{equation}\label{case C eq1}
\begin{split}
\|C_\epsilon(a)g-&C_0(a)g\|_{L^2(\Sigma\times(-1,1))^4}\\
&\leq\|C_\epsilon(a)(\chi_{\Omega_d}g)\|_{L^2(\Sigma\times(-1,1))^4}
+\|C_0(a)(\chi_{\Omega_d}g)\|_{L^2(\Sigma\times(-1,1))^4}\\
&\quad+\|(C_\epsilon(a)-C_0(a))(\chi_{\Rt\setminus\Omega_d}g)\|_{L^2(\Sigma\times(-1,1))^4}\\
&\leq 2C_1\|u\|_{L^\infty(\R)}\|\chi_{\Omega_d}g\|_{L^2(\Rt)^4}
+\|(C_\epsilon(a)-C_0(a))(\chi_{\Rt\setminus\Omega_d}g)\|_{L^2(\Sigma\times(-1,1))^4}.
\end{split}
\end{equation}
On one hand, since $g\in L^2(\Rt)^4$ and $\LL(\S)=0$ ($\LL$ denotes the Lebesgue measure in $\R^3$), we can take $d>0$ small enough so that 
\begin{equation}\label{case C eq2}
\|\chi_{\Omega_d}g\|_{L^2(\Rt)^4}\leq\frac{\delta}{4C_1\|u\|_{L^\infty(\R)}}.
\end{equation}
On the other hand, note that
\begin{equation}\label{case C eq2*}
|(x_\S+\epsilon t\nu(x_\S))-x_\S|=\epsilon |t||\nu(x_\S)|
\leq\epsilon\leq\frac{d}{2}=\frac{1}{2}\,\dt(\S,\Rt\setminus\Omega_d)
\leq\frac{1}{2}\,|x_\S-y|
\end{equation}
for all $0\leq\epsilon\leq\frac{d}{2}$, $t\in(-1,1)$, $x_\S\in\S$ and $y\in\Rt\setminus\Omega_d$.

As we said before, we are assuming that $a\in\C\setminus\R$ and, in the definition of $\phi^a$, we are taking $\sqrt{m^2-a^2}$ with positive real part, so the components of $\phi^a(x)$ decay exponentially as $|x|\to\infty$. In particular, there exist $C,r>0$ only depending on $a$ such that 
\begin{equation}\label{Horm est*}
\begin{split}
&|\partial\phi^a(x)|
\leq Ce^{-r|x|}\quad\text{for all }|x|\geq 1,\\
&|\partial\phi^a(x)|
\leq C|x|^{-3}\quad\text{for all }0<|x|<1,
\end{split}
\end{equation}
where by the left hand side in \eqref{Horm est*} we mean the absolute value of any derivative of any component of the matrix $\phi^a(x)$. Therefore, using the mean value theorem, \eqref{Horm est*} and \eqref{case C eq2*}, we see that there exists $C_{a,d}>0$ only depending on $a$ and $d$ such that
\begin{equation}
|\phi^a(x_\S+\epsilon t\nu(x_\S)-y)-\phi^a(x_\S-y)\big|
\leq C_{a,d}\,\frac{\epsilon}{|x_\S-y|^3}
\end{equation}
for all $0\leq\epsilon\leq\frac{d}{2}$, $t\in(-1,1)$, $x_\S\in\S$ and $y\in\Rt\setminus\Omega_d$. Hence, we can easily estimate
\begin{equation}
\begin{split}
|(C_\epsilon(a)-&C_0(a))(\chi_{\Rt\setminus\Omega_d}g)(x_\S,t)|\\
&\leq\|u\|_{L^\infty(\R)}\int_{\Rt\setminus\Omega_d}
\big|\phi^a(x_\S+\epsilon t\nu(x_\S)-y)-\phi^a(x_\S-y)\big||g(y)|\,dy\\
&\leq C_{a,d}\|u\|_{L^\infty(\R)}\int_{\Rt\setminus\Omega_d}
\frac{\epsilon|g(y)|}{|x_\S-y|^3}\,dy\\
&\leq C_{a,d}\,\epsilon\|u\|_{L^\infty(\R)}\Big(\int_{\Rt\setminus B_{d}(x_\S)}
\frac{dy}{|x_\S-y|^6}\Big)^{1/2}
\|g\|_{L^2(\Rt)^4}
\leq C'_{a,d}\,\epsilon\|u\|_{L^\infty(\R)}\|g\|_{L^2(\Rt)^4},
\end{split}
\end{equation}
where $C'_{a,d}>0$ only depends on $a$ and $d$. Then, 
\begin{equation}\label{case C eq3}
\|(C_\epsilon(a)-C_0(a))(\chi_{\Rt\setminus\Omega_d}g)\|_{L^2(\Sigma\times(-1,1))^4}
\leq C'_{a,d}\,\epsilon\|u\|_{L^\infty(\R)}\|g\|_{L^2(\Rt)^4}
\end{equation}
for a possibly bigger constant $C'_{a,d}>0$.

With these ingredients, the proof of \eqref{case C eq0} is straightforward. Given $\delta>0$, take $d>0$ small enough so that \eqref{case C eq2} holds. For this fixed $d$, take 
\begin{equation}
\epsilon_0=\min\bigg\{\frac{\delta}{2C'_{a,d}\|u\|_{L^\infty(\R)}\|g\|_{L^2(\Rt)^4}},\frac{d}{2}\bigg\}.
\end{equation}
Then, \eqref{case C eq0} follows from \eqref{case C eq1}, \eqref{case C eq2} and \eqref{case C eq3}. In conclusion, we have shown that
\begin{equation}\label{0001}
\lim_{\epsilon\to 0}\|(C_\epsilon(a)-C_0(a))g\|_{L^2(\Sigma\times(-1,1))^4}=0\quad\text{for all }g\in L^2(\Rt)^4,
\end{equation}
which is \eqref{convergence C}.

\section{Proof of \eqref{conv B th}: $B_\epsilon (a)\to B_0(a)+B'$ in the strong sense when $\epsilon\to0$} \label{ss B}
Recall from \eqref{ABCepsilon} and \eqref{limit operators defi} that $B_\epsilon(a)$  with $0<\epsilon\leq\eta_0$, $B_0(a)$ and $B'$ are defined  by
\begin{equation}
\begin{split}
&(B_\epsilon (a)g)(x_\S ,t)= u(t)\int_{-1}^1\int_\S\phi^a (x_\S + \epsilon t \nu (x_\S) -y_\S -\epsilon s \nu (y_\S))v(s)\\
&\hskip200pt \times  \det(1-\epsilon s W(y_\S)) g(y_\S ,s)\,d\upsigma (y_\S)\,ds,\\
&(B_0(a) g)(x_\S,t)=\lim_{\epsilon\to 0}u(t)\int_{-1}^1\int_{|x_\S-y_\S|>\epsilon}\phi^a (x_\S  -y_\S )v(s) g(y_\S ,s)\,ds\,d\upsigma (y_\S),\\
&(B'g)(x_\S,t)=(\alpha\cdot \nu(x_\S))\,\frac{i}{2}\,u(t)\int_{-1}^1 \sgn(t-s)v(s) g(x_\S,s)\,ds.
\end{split}
\end{equation}
We already know that $B_\epsilon(a)$ and $B'$ are bounded in $L^2(\Sigma\times(-1,1))^4$. Let us postpone to Section \ref{meB} the proof of the boundedness of $B_0(a)$ stated in \eqref{ABC espacios2}.
The first step to prove \eqref{conv B th} is to decompose $\phi^a $ as in {\cite[Lemma 3.2]{amv2}}, that is,
\begin{equation}\label{eqn:break phi}
\begin{split}
\phi^a(x)&=\frac{e^{-\sqrt{m^2-a^2}|x|}}{4\pi|x|}\Big(a+m\beta +\sqrt{m^2-a^2}\,i\alpha\cdot\frac{x}{|x|}\Big)\\
&\quad+\frac{e^{-\sqrt{m^2-a^2}|x|}-1}{4 \pi}\,i\alpha\cdot\frac{x}{|x|^3}+\frac{i}{4\pi}\,\alpha\cdot\frac{x}{|x|^3}
=:\omega^a_1(x)+\omega^a_2(x)+\omega_3(x).
\end{split}
\end{equation}
Then we can write
\begin{equation}\label{eqn:break phi2}
\begin{split}
&B_\epsilon (a)=B_{\epsilon,\omega_1^a}+B_{\epsilon,\omega_2^a}+B_{\epsilon,\omega_3},\\
&B_0 (a)=B_{0,\omega_1^a}+B_{0,\omega_2^a}+B_{0,\omega_3}, 
\end{split}
\end{equation}
where $B_{\epsilon,\omega_1^a}$, $B_{\epsilon,\omega_2^a}$ and $B_{\epsilon,\omega_3}$ are defined as $B_\epsilon(a)$ but replacing $\phi^a$ by $\omega_1^a$, $\omega_2^a$ and $\omega_3$, respectively, and analogously for the case of $B_0(a)$. 

For $j=1,2$, we see that $|\omega_j^a(x)|= O(|x|^{-1})$ and 
$|\partial\omega_j^a(x)|= O(|x|^{-2})|$ for $|x|\to 0$, with the understanding that $|\omega_j^a(x)|$ means the absolute value of any component of the matrix $\omega_j^a(x)$ and $|\partial\omega_j^a(x)|$ means the absolute value of any first order derivative of any component of $\omega_j^a(x)$. Therefore, the integrals defining $B_{\epsilon,\omega_j^a}$ and $B_{0,\omega_j^a}$ are of fractional type for $j=1,2$ (recall Lemma \ref{2d AD regularity}) and they are taken over bounded sets, so the strong convergence follows by standard methods.
However, one can also follow the arguments in the proof of {\cite[Lemma 3.4]{approximation}} to show, for $j=1,2$, the convergence of $B_{\epsilon,\omega_j^a}$ to $B_{0,\omega_j^a}$ in the norm sense when $\epsilon\to0$, that is,
\begin{equation}\label{0002}
\lim_{\epsilon\to 0}\|B_{\epsilon,\omega_j^a}-B_{0,\omega_j^a}\|_{L^2(\Sigma\times(-1,1))^4\to L^2(\Sigma\times(-1,1))^4}=0\quad\text{for } j=1,2.
\end{equation}
A comment is in order. Since the integrals involved in \eqref{0002} are taken over $\S\times(-1,1)$, which is bounded, the exponential decay at infinity from {\cite[Proposition A.1]{approximation}} is not necessary in the setting of \eqref{conv B th}, hence the local estimate of $|\omega_j^a(x)|$ and $|\partial\omega_j^a(x)|$ near the origin is enough to adapt the proof of {\cite[Lemma 3.4]{approximation}} to get \eqref{0002}.

Thanks to \eqref{eqn:break phi2} and \eqref{0002}, to prove \eqref{conv B th} we only need to show that 
$B_{\epsilon,\omega_3}\to B_{0,\omega_3}+B'$ in the strong sense when $\epsilon\to0$. This will be done in two main steps. First, we will show that 
\begin{equation}\label{point limit}
\lim_{\epsilon\to0}B_{\epsilon,\omega_3}g(x_\S,t)
=B_{0,\omega_3}g(x_\S,t)
+B'g(x_\S,t)\quad\text{for allmost all }(x_\S,t)\in\S\times(-1,1)
\end{equation}
and all $g\in L^\infty(\S\times(-1,1))^4$ such that 
$\sup_{|t|<1}|g(x_\S,t)-g(y_\S,t)|\leq C|x_\S-y_\S|$ for all $x_\S,\,y_\S\in\S$ and some $C>0$ which may depend on $g$. This is done in Section \ref{pointwise B}. Then, for a general $g\in L^2(\S\times(-1,1))^4$, we will estimate $|B_{\epsilon,\omega_3}g(x_\S,t)|$ in terms of some bounded maximal operators that will allow us to prove the pointwise limit \eqref{point limit} for almost every $(x_\S,t)\in\S\times(-1,1)$ and the desired strong convergence of $B_{\epsilon,\omega_3}$ to $B_{0,\omega_3}+B'$, see Section \ref{meB}.

\subsection{The pointwise limit of $B_{\epsilon,\omega_3}g(x_\S,t)\text{ when }\epsilon\to0$ for $g$ in a dense subspace of $L^2(\S\times(-1,1))^4$}\label{pointwise B}
\mbox{}

Observe that the function $u$ in front of the definitions of $B_{\epsilon,\omega_3}$, $B_{0,\omega_3}$ and $B'$ does not affect to the validity of the limit in \eqref{point limit}, so we can assume without loss of generality that $u\equiv1$ in $(-1,1)$. 

We are going to prove \eqref{point limit} by showing the pointwise limit component by component, that is, we are going to work in $L^\infty(\S\times(-1,1))$ instead of $L^\infty(\S\times(-1,1))^4$. In order to do so, we need to introduce some definitions. Set 
\begin{equation}\label{CZ kernel1}
k(x):=\frac{x}{4\pi |x|^3}\quad\text{ for $x\in\Rt\setminus\{0\}$.}
\end{equation} 
Given $t\in(-1,1)$ and $0<\epsilon\leq\eta_0$ with $\eta_0$ small enough and $f\in L^\infty(\S\times(-1,1))$ such that 
$\sup_{|t|<1}|f(x_\S,t)-f(y_\S,t)|\leq C|x_\S-y_\S|$ for all $x_\S,\,y_\S\in\S$ and some $C>0$, we define
\begin{equation}
T_t^\epsilon f(x_\Sigma):=\int_{-1}^1\int_\Sigma k (x_\Sigma+\epsilon t\nu(x_\Sigma)-y_\Sigma-\epsilon s \nu(y_\Sigma))f(y_\Sigma,s)\det(1-\epsilon sW(y_\Sigma))\,d\upsigma(y_\Sigma)\,ds.
\end{equation}
By \eqref{eqn:coaera2}, 
\begin{equation}\label{eqn:det t_eps}
\begin{split}
T_t^\epsilon f(x_\Sigma)=\int_{-1}^1\int_{\Sigma_{\epsilon s}} k (x_{\epsilon t} - y_{\epsilon s})f(P_\Sigma y_{\epsilon s},s)\,d\upsigma_{\epsilon s}(y_{\epsilon s})\,ds,
\end{split}
\end{equation}
where $x_{\epsilon t}:=x_\Sigma+\epsilon t\nu(x_\Sigma)$, $y_{\epsilon s}:=y_\Sigma+\epsilon s\nu(y_\Sigma)$ and $P_\Sigma$ is given by \eqref{P Sigma}. We  also set
\begin{equation}
\begin{split}
T_t f(x_\Sigma):=\lim_{\delta\to 0}\int_{-1}^1\!\int_{|x_\Sigma-y_\Sigma|>\delta}\!\!k(x_\Sigma-y_\Sigma)f(y_\Sigma ,s)\,d\upsigma(y_\Sigma)\,ds+\frac{\nu(x_\S)}{2}\int_{-1}^1\!\sgn(t-s) f(x_\Sigma,s)\,ds.
\end{split}
\end{equation}

We are going to prove that
\begin{equation}\label{eqn:t eps to t t}
\lim_{\epsilon\to 0} T^\epsilon_t f(x_\Sigma)=T_tf(x_\Sigma)
\end{equation}
for almost all $(x_\Sigma,t)\in\S\times(-1,1)$. Once this is proved, it is not hard to get \eqref{point limit}. Indeed, note that $k=(k_1,k_2,k_3)$ with $k_j(x):=\frac{x_j}{4\pi |x|^3}$ being the scalar components of the vector kernel $k(x)$. Thus, we can write 
\begin{equation}T_t^\epsilon f(x_\Sigma)=\big((T_t^\epsilon f(x_\Sigma))_1,(T_t^\epsilon f(x_\Sigma))_2,(T_t^\epsilon f(x_\Sigma))_3\big),\end{equation}
where each $(T_t^\epsilon f(x_\Sigma))_j$ is defined as in \eqref{eqn:det t_eps} but replacing $k$ by $k_j$. Then, \eqref{eqn:t eps to t t} holds if and only if $(T^\epsilon_t f(x_\Sigma))_j\to(T_tf(x_\Sigma))_j$ when $\epsilon\to0$ for $j=1,2,3.$ From this limits, if we let $f(y_\S ,s)$ in the definitions of $T_t^\epsilon f$ and $T_tf$ be the different componens of $v(s)g(y_\S ,s)$, we easily deduce \eqref{point limit}. Thus, we are reduced to prove \eqref{eqn:t eps to t t}.

The proof of \eqref{eqn:t eps to t t} follows the strategy of the proof of {\cite[Proposition 3.30]{mitrea}}. Set 
\begin{equation}E(x):=-\frac{1}{4\pi |x|}\quad\text{for $x\in\Rt\setminus\{0\}$,}\end{equation} the fundamental solution of the Laplace operator in $\Rt$. Note that $\nabla E=k=(k_1,k_2,k_3).$ In particular, if we set $\nu=(\nu_1,\nu_2,\nu_3)$ and $x=(x_1,x_2,x_3)$, for $x\in\Rt$ and $y\in\S$ with $x\neq y$ we can decompose
\begin{equation}\label{desc K}
\begin{split}
k_j(x&-y)=\de_{x_j} E(x-y)=|\nu(y)|^2\,\de_{x_j} E(x-y)\\
&=\sum_n \nu_n(y)^2\de_{x_j} E(x-y)+\sum_n \nu_j(y)\nu_n(y)\de_{x_n} E(x-y)-\sum_n \nu_j(y)\nu_n(y)\de_{x_n}E(x-y)\\
&=\nu_j(y)\sum_n\de_{x_n}E(x-y)\nu_n(y)+\sum_n\Big( \nu_n(y)\de_{x_j}E(x-y)-\nu_j(y)\de_{x_n}E(x-y)\Big)\nu_n(y)\\
&=\nu_j(y)\nabla_{\nu(y)}E(x-y)+\sum_n \nabla^{j,n}_{\nu(y)}E(x-y)\nu_n(y),
\end{split}
\end{equation}
where we have taken
\begin{equation}\label{defi deriva}
\begin{split}
&\nabla_{\nu(y)}E(x-y):=\sum_n \nu_n(y)\de_{x_n}E(x-y)=\nabla_{\!x} E(x-y)\cdot\nu(y),\\
&\nabla^{j,n}_{\nu(y)}E(x-y):= \nu_n(y)\de_{x_j}E(x-y)-\nu_j(y)\de_{x_n}E(x-y).
\end{split}
\end{equation}
For $j,\,n\in\{1,2,3\}$ we define
\begin{equation}\label{defi deriva2}
\begin{split}
&T^\epsilon_\nu f(x_\S,t):= \int_{-1}^1\int_{\Sigma_{\epsilon s}} \nabla_{\nu_{\epsilon s}(y_{\epsilon s})}E (x_{\epsilon t} - y_{\epsilon s})f( P_\S y_{\epsilon s},s)\,d\upsigma_{\epsilon s}(y_{\epsilon s})\,ds,\\
&T^\epsilon_{j,n} f(x_\S,t):=\int_{-1}^1\int_{\Sigma_{\epsilon s}} \nabla^{j,n}_{\nu_{\epsilon s}(y_{\epsilon s})}E (x_{\epsilon t} - y_{\epsilon s})f(P_\S y_{\epsilon s},s)\,d\upsigma_{\epsilon s}(y_{\epsilon s})\,ds,
\end{split}
\end{equation}
being $\nu_{\epsilon s}(y_{\epsilon s}):=\nu(y_\S)$ a normal vector field to $\Sigma_{\epsilon s}$. Besides, the terms $\nabla_{\nu_{\epsilon s}(y_{\epsilon s})}E (x_{\epsilon t} - y_{\epsilon s})$ and $\nabla^{j,n}_{\nu_{\epsilon s}(y_{\epsilon s})}E (x_{\epsilon t} - y_{\epsilon s})$ in \eqref{defi deriva2} are defined as in \eqref{defi deriva} with the obvious replacements. 

Given $f\in L^\infty(\S\times(-1,1))$ such that 
$\sup_{|t|<1}|f(x_\S,t)-f(y_\S,t)|\leq C|x_\S-y_\S|$ for all $x_\S,\,y_\S\in\S$ and some $C>0$, by \eqref{desc K} we see that
\begin{equation}\label{eqn: def fk}
(T_t^\epsilon f(x_\Sigma))_j=T^\epsilon_\nu h_j(x_\Sigma,t)+\sum_n T_{j,n}^\epsilon h_n(x_\Sigma,t),
\end{equation}
where $h_n(P_\S y_{\epsilon s},s):= (\nu_{\epsilon s}(y_{\epsilon s}))_n\, f(P_\Sigma y_{\epsilon s},s)$ for $n=1,2,3$. We are going to prove that
\begin{align}\label{eqn:tnu convergence}
\lim_{\epsilon\to 0} T^\epsilon_\nu h_j (x_\Sigma,t)
&= \lim_{\delta \to 0} \int_{-1}^1 \int_{|x_\Sigma-y_\Sigma|>\delta}\nabla_{\nu(y_\Sigma)}E(x_\Sigma-y_\Sigma) h_j (y_\Sigma,s)\,d\upsigma(y_\Sigma)\,ds\\
&\quad+\frac{1}{2}\int_{-1}^1\sgn(t-s)h_j(x_\Sigma,s)\,ds,\notag\\
\label{eqn:tjk convergence}
\lim_{\epsilon\to 0} T_{j,n}^\epsilon h_n(x_\Sigma,t) &=\lim_{\delta \to 0} \int_{-1}^1 \int_{|x_\Sigma-y_\Sigma|>\delta}\nabla^{j,n}_{\nu(y_\Sigma)}E(x_\Sigma-y_\Sigma) h_n (y_\Sigma,s)\,d\upsigma(y_\Sigma)\,ds
\end{align}
for $n=1,2,3$. Then, combining \eqref{eqn: def fk}, \eqref{eqn:tnu convergence} and \eqref{eqn:tjk convergence}, we obtain \eqref{eqn:t eps to t t}. Therefore, it is enough to show \eqref{eqn:tnu convergence} and \eqref{eqn:tjk convergence}.

We first deal with \eqref{eqn:tnu convergence}. Remember that $\nabla E=k$ so, given $\delta>0$, from \eqref{defi deriva} and \eqref{defi deriva2} we can split
\begin{equation}
\begin{split}
T_\nu^\epsilon h_j(x_\S,t)
&=\int_{-1}^1 \int_{|x_{\epsilon s}-y_{\epsilon s}|>\delta} k (x_{\epsilon t} - y_{\epsilon s})\cdot \nu_{\epsilon s}(y_{\epsilon s})\, h_j ( P_\S y_{\epsilon s},s)\,d\upsigma_{\epsilon s}(y_{\epsilon s})\,ds\\
&\quad +\int_{-1}^1 \int_{|x_{\epsilon s}-y_{\epsilon s}|\leq\delta}\!\! k (x_{\epsilon t} - y_{\epsilon s})\cdot \nu_{\epsilon s}(y_{\epsilon s}) \Big(h_j ( P_\S y_{\epsilon s},s) - h_j(P_\S x_{\epsilon s},s)\Big)d\upsigma_{\epsilon s}(y_{\epsilon s})\,ds\\
&\quad+\int_{-1}^1  h_j(P_\S x_{\epsilon s},s) \int_{|x_{\epsilon s}-y_{\epsilon s}|\leq\delta} k (x_{\epsilon t} - y_{\epsilon s})\cdot \nu_{\epsilon s}(y_{\epsilon s})\,d\upsigma_{\epsilon s}(y_{\epsilon s})\,ds\\
&=:\mathscr{A}_{\epsilon,\delta}+\mathscr{B}_{\epsilon,\delta}+\mathscr{C}_{\epsilon,\delta},
\end{split}
\end{equation}
and we easily see that
\begin{equation}\label{T split}
\lim_{\epsilon\to 0}T^\epsilon_{\nu}h_j(x_\S,t)=\lim_{\delta\to 0}\,\lim_{\epsilon\to 0}\big(\mathscr{A}_{\epsilon,\delta}+\mathscr{B}_{\epsilon,\delta}+\mathscr{C}_{\epsilon,\delta}\big).
\end{equation}
We study the three terms on the right hand side of \eqref{T split} separately.

For the case of $\mathscr{A}_{\epsilon,\delta}$, note that $k\in C^\infty(\R^3\setminus B_\delta(0))^3$ and it has polynomial decay at $\infty$, so 
\begin{equation}|k(x)|+|\partial k(x)|\leq C<+\infty\quad\text{for all $x\in\R^3\setminus B_\delta(0)$,}\end{equation} where $C>0$ only depends on $\delta$, and $\partial k$ denotes any first order derivative of any component of $k$. Moreover, $h_j$ is bounded on $\S\times(-1,1)$ 
and $\Sigma$ is bounded and of class $C^2$. Therefore, fixed $\delta >0$, the uniform boundedness of the integrand combined with the regularity of $k$ and $\S$ and the dominated convergence theorem yields
\begin{equation}\label{AAA}
\lim_{\epsilon\to 0}\mathscr{A}_{\epsilon,\delta}=\int_{-1}^1 \int_{|x_\S-y_\S|>\delta} k (x_{\Sigma} - y_{\Sigma})\cdot \nu(y_{\Sigma})\, h_j ( y_{\Sigma},s)\,d\upsigma(y_{\Sigma})\,ds.
\end{equation}
Then, if we let $\delta \to 0$, from \eqref{AAA} we get the first term on the right hand side of \eqref{eqn:tnu convergence}.

Recall that the function $h_j$ appearing in $\mathscr{B}_{\epsilon,\delta}$ is constructed from the one in \eqref{point limit} using $v$ (see below \eqref{eqn:t eps to t t}) and $\nu_{\epsilon s}$ (see below \eqref{eqn: def fk}). Hence $h_j\in L^\infty(\S\times(-1,1))$ and 
$\sup_{|t|<1}|h_j(x_\S,t)-h_j(y_\S,t)|\leq C|x_\S-y_\S|$ for all $x_\S,\,y_\S\in\S$ and some $C>0$. Thus, if $\eta_0$ and $\delta$ are small enough, by the mean value theorem there exists $C>0$ such that
\begin{equation}\label{eqn:estimatek}
\begin{split}
\big|k (x_{\epsilon t} - y_{\epsilon s})\cdot \nu_{\epsilon s}(y_{\epsilon s})( h_j ( P_\S y_{\epsilon s},s) -h_j(P_\S x_{\epsilon s},s))\big|
\leq C\frac{|P_\S y_{\epsilon s}-P_\S x_{\epsilon s}|}{|x_{\epsilon t}-y_{\epsilon s}|^2}
\leq \frac{C}{|y_{\epsilon s}-x_{\epsilon s}|}
\end{split}
\end{equation}
for all $0\leq\epsilon\leq\eta_0$ and $|x_{\epsilon s}-y_{\epsilon s}|\leq\delta$. In the last inequality in \eqref{eqn:estimatek} we used that $P_\S$ is Lipschitz on $\Omega_{\eta_0}$ and that $|x_{\epsilon s}-y_{\epsilon s}|\leq C|x_{\epsilon t}-y_{\epsilon s}|$ if $|x_{\epsilon s}-y_{\epsilon s}|\leq\delta$ and $\delta$ is small enough (due to the regularity of $\S$). 
From the local integrability of the right hand side of \eqref{eqn:estimatek} with respect to $\upsigma_{\epsilon s}$ (see Lemma \ref{2d AD regularity}) and standard arguments, we easily deduce the existence of $C_\delta>0$ such that
$\sup_{0\leq\epsilon\leq\eta_0}|\mathscr{B}_{\epsilon, \delta}|\leq C_\delta$ and $C_\delta\to 0$ when $\delta \to 0$, see \cite[equation (A.7)]{approximation} for a similar argument. 
Then, we can resume
\begin{equation}\label{BBB}
\Big|\lim_{\delta\to 0}\lim_{\epsilon\to 0} \mathscr{B}_{\epsilon,\delta}\Big|
\leq \lim_{\delta\to 0}\sup_{0\leq\epsilon\leq\eta_0}|\mathscr{B}_{\epsilon, \delta}|\leq\lim_{\delta\to 0}C_\delta=0.
\end{equation}

Let us finally focus on $\mathscr{C}_{\epsilon,\delta}$.
Since $k=\nabla E$, from \eqref{defi deriva} we get 
\[
\int_{|x_{\epsilon s}-y_{\epsilon s}|\leq \delta}k(x_{\epsilon t}-y_{\epsilon s})\cdot\nu_{\epsilon s}(y_{\epsilon s})\,d\upsigma_{\epsilon s}(y_{\epsilon s})=\int_{|x_{\epsilon s}-y_{\epsilon s}|\leq \delta}\nabla_{\nu_{\epsilon s}(y_{\epsilon s})}E(x_{\epsilon t}-y_{\epsilon s})\,d\upsigma_{\epsilon s}(y_{\epsilon s}).
\]
Consider the set
\[
D_\delta^\epsilon(t,s):=\begin{cases}
B_\delta(x_{\epsilon s})\setminus \overline{\Omega({\epsilon,s})} &\text{if } t\leq s,\\
B_\delta(x_{\epsilon s})\cap \Omega({\epsilon, s}) &\text{if } t>s,
\end{cases}
\]
where $\Omega({\epsilon, s})$ denotes the bounded connected component of $\Rt\setminus\S_{\epsilon s}$ that contains $\Omega$ if $s\geq 0$ and that is included in $\Omega$ if $s<0$.

\begin{figure}[!h]
\stackunder[5pt]{\includegraphics[scale=0.55]{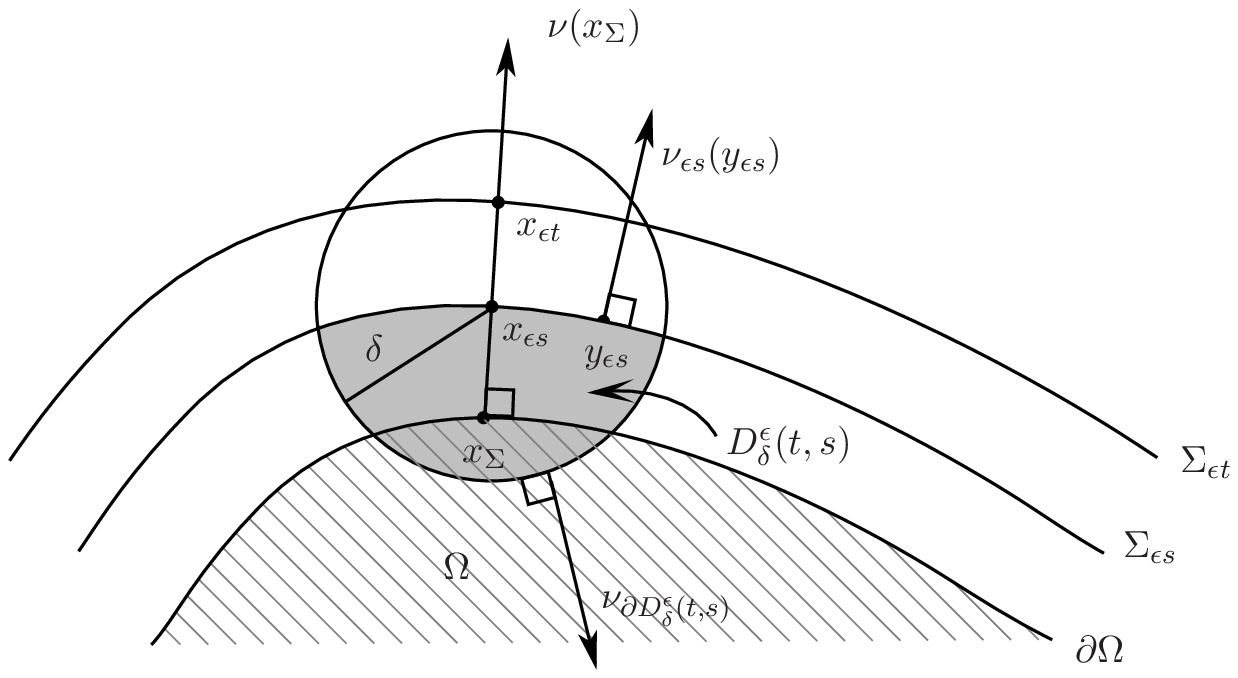}}
{$D_\delta^\epsilon(t,s)$ in the case $t>s>0$,}
\stackunder[5pt]{\includegraphics[scale=0.55]{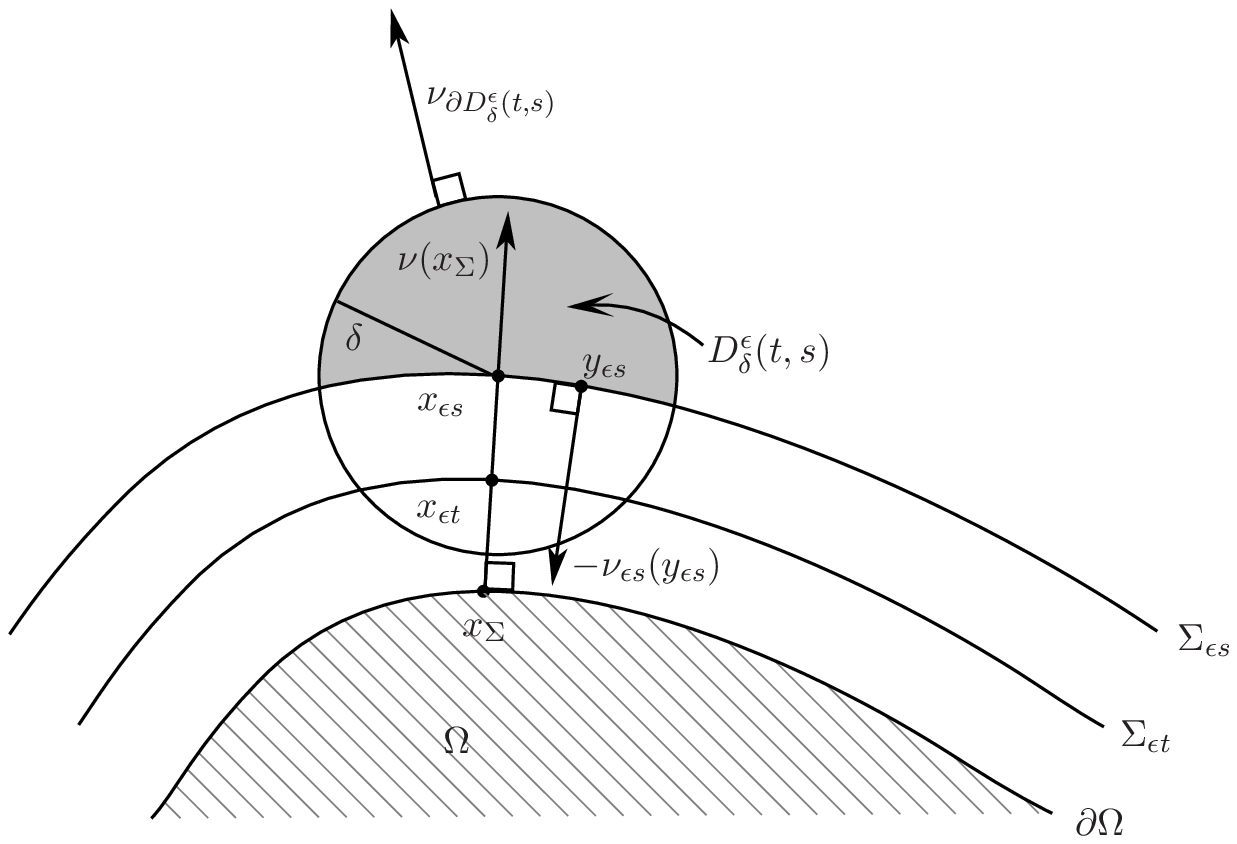}}
{$D_\delta^\epsilon(t,s)$ in the case $s>t>0$.}
\caption{The set $D_\delta^{\epsilon}(t,s)$.}
\label{figura}
\end{figure}

Set $E_x(y):=E(x-y)$ for $x,\,y\in\Rt$ with $x\neq y$. Then $\Delta E_{x_{\epsilon t}}=0$  in $D_\delta^\epsilon(t,s)$ and $\nabla  E_{x_{\epsilon t}}(y)=-\nabla E(x_{\epsilon t}-y)$.
If $\nu_{\partial D_\delta^\epsilon(t,s)}$ denotes the normal vector field on $\partial{D_\delta^\epsilon(t,s)}$ pointing outside $D_\delta^\epsilon(t,s)$, by the divergence theorem,
\begin{equation}\label{eqn:K sign*}
\begin{split}
0&=\int_{D_\delta^\epsilon(t,s)}\Delta E_{x_{\epsilon t}}(y)\,dy
=-\int_{\de D_\delta^\epsilon(t,s)}\nabla E(x_{\epsilon t}-y)\cdot\nu_{\partial D_\delta^\epsilon(t,s)}(y)\,d\mathcal{H}^2(y)\\
&=-\sgn (t-s)\int_{|x_{\epsilon s}-y_{\epsilon s}|\leq\delta}\nabla_{\nu_{\epsilon s}(y_{\epsilon s})} E(x_{\epsilon t}-y_{\epsilon s}) \,d\upsigma_{\epsilon s}(y_{\epsilon s})\\
&\quad-\int_{\{y\in\Rt:\,|x_{\epsilon s}-y|=\delta\}\cap A^\epsilon_{t,s}}\nabla E(x_{\epsilon t}-y)\cdot\frac{y-x_{\epsilon s}}{|y-x_{\epsilon s}|}\,d\mathcal{H}^2(y),
\end{split}
\end{equation}
where
\begin{equation}
\text{$A^\epsilon_{t,s}:=\Rt\setminus\overline{\Omega(\epsilon, s)}$ if $t\leq s$\qquad and \qquad$A^\epsilon_{t,s}:=\Omega(\epsilon, s)$ if $t>s$.}
\end{equation}
Remember also that  $\mathcal{H}^2$ denotes the 2-dimensional Hausdorff measure.
Since $\nabla E=k$, from \eqref{eqn:K sign*} and \eqref{defi deriva} we deduce that
\begin{equation}\label{eqn:K sign}
\begin{split}
\int_{|x_{\epsilon s}-y_{\epsilon s}|\leq\delta}k(x_{\epsilon t}-y_{\epsilon s})&\cdot\nu_{\epsilon s}(y_{\epsilon s}) \,d\upsigma_{\epsilon s}(y_{\epsilon s})\\
&=\sgn (t-s)\int_{\partial B_\delta(x_{\epsilon s})\cap A^\epsilon_{t,s}}
k(x_{\epsilon t }- y)
\cdot\frac{x_{\epsilon s}-y}{|x_{\epsilon s}-y|}\,d\mathcal{H}^2(y).
\end{split}
\end{equation}

Note that $x_{\epsilon t}\not\in D_\delta^\epsilon(t,s)$ by construction, see Figure \ref{figura}. Moreover, by the regularity of $\S$, given $\delta>0$ small enough we can find $\epsilon_0>0$ so that
$|x_{\epsilon t}-y|\geq \delta /2$ for all $0<\epsilon\leq\epsilon_0$, $s,t \in [-1,1]$ and $y\in \partial B_\delta(x_{\epsilon s})\cap A^\epsilon_{t,s}$. In particular, 
\begin{equation}\label{correc5}
|k(x_{\epsilon t}-y)|\leq C<+\infty\qquad
\text{for all }y\in \partial B_\delta(x_{\epsilon s})\cap A^\epsilon_{t,s},
\end{equation}
where $C$ only depends on $\delta$ and $\epsilon_0$.
Then,
\begin{equation}\label{lim Kepsi}
\begin{split}
\chi_{\partial B_\delta(x_{\epsilon s})\cap A^\epsilon_{t,s}}(y)\,k(x_{\epsilon t}-y&)\cdot\frac{x_{\epsilon s}-y}{|x_{\epsilon s}-y|}\,d\HH^2(y)
\\
&=\chi_{\partial B_\delta(x_{\epsilon s})\cap A^\epsilon_{t,s}}(y)\,\frac{x_{\epsilon t}-y}{4\pi|x_{\epsilon t}-y|^3}\cdot\frac{x_{\epsilon s}-y}{|x_{\epsilon s}-y|}\,d\HH^2(y)\\
&\to \frac{\chi_{\partial B_\delta(x_{\S})\cap D({t,s})}(y)}{4\pi |x_\S-y|^2}\,d\HH^2(y)\quad\text{when }\epsilon\to 0,
\end{split}
\end{equation}
where 
\begin{equation}
\text{$D(t,s):=\Rt\setminus\overline{\Omega}$ if $t\leq s$\qquad and \qquad$D(t,s):=\Omega$ if $t> s$.}
\end{equation}
The limit in \eqref{lim Kepsi} refers to weak-$*$ convergence of finite Borel measures in $\R^3$ (acting on the variable $y$).
Using \eqref{lim Kepsi}, the uniform estimate \eqref{correc5}, the boundedness of $h_j$ and the dominated convergence theorem, we see that
\begin{equation}
\begin{split}
\lim_{\epsilon\to 0}\int_{-1}^1\sgn (t-s)h_j(x_\S,&s)
\int_{\partial B_\delta(x_{\epsilon s})\cap A^\epsilon_{t,s}}
k(x_{\epsilon t }- y)\cdot\frac{x_{\epsilon s}-y}{|x_{\epsilon s}-y|}\,d\mathcal{H}^2(y)\,ds\\
&=\int_{-1}^1\sgn(t-s)h_j(x_\S,s)
\int_{\partial B_\delta(x_{\S})\cap D(t,s)}
\frac{1}{4\pi|x_\S-y|^2}\,d\mathcal{H}^2(y)\,ds\\
&=\int_{-1}^1\sgn(t-s)h_j(x_\S,s)
\frac{\mathcal{H}^2\big(\de B_\delta(x_\Sigma)\cap D(t,s)\big)}
{\mathcal{H}^2(\de B_\delta(x_\Sigma))}\,ds.
\end{split}
\end{equation}
Then, using the regularity of $\S$ and the dominated convergence theorem once again, we get
\begin{equation}\label{lim Kepsi2}
\begin{split}
\lim_{\delta\to 0}\lim_{\epsilon\to 0}\int_{-1}^1\sgn (t-s)h_j(x_\S,s)\int_{\de B_\delta(x_{\epsilon s})\cap A^\epsilon_{t,s}}
k(x_{\epsilon t }&- y)\cdot\frac{x_{\epsilon s}-y}{|x_{\epsilon s}-y|}\,d\mathcal{H}^2(y)\,ds\\
&=\frac{1}{2}\int_{-1}^1\sgn(t-s)h_j(x_\S,s)\,ds.
\end{split}
\end{equation}
By \eqref{eqn:K sign}, \eqref{lim Kepsi2} and the definition of $\mathscr{C}_{\epsilon,\delta}$ before \eqref{T split}, we get
\begin{equation}\label{CCC}
\lim_{\delta\to 0}\lim_{\epsilon\to 0}\mathscr{C}_{\epsilon,\delta}=\frac{1}{2}\int_{-1}^1\sgn(t-s)h_j(x_\Sigma,s)\,ds.
\end{equation}

The proof of \eqref{eqn:tnu convergence} is a straightforward combination of \eqref{T split}, \eqref{AAA}, \eqref{BBB} and \eqref{CCC}.

To prove \eqref{eqn:tjk convergence} we use the same approach as in \eqref{eqn:tnu convergence}, that is, we split  
\begin{equation}T_{j,n}^\epsilon h_n(x_\S,t)=:\mathscr{A}_{\epsilon,\delta}+\mathscr{B}_{\epsilon,\delta}+\mathscr{C}_{\epsilon,\delta}\end{equation} like above \eqref{T split}. The first two terms can be treated analogously and one gets the desired result, the details are left for the reader. 
To estimate $\mathscr{C}_{\epsilon,\delta}$ we use the notation introduced before. Recall that $E_{x_{\epsilon t}}$ is smooth in $\overline{D_\delta^\epsilon(t,s)}$ (assuming $t\neq s$) and $k(x_{\epsilon t}-y)=\nabla E(x_{\epsilon t}-y)=-\nabla E_{x_{\epsilon t}}(y)$. So, by the divergence theorem (see also \eqref{defi deriva}), 
\begin{equation}\label{aux1}
\begin{split}
\int_{\de D_\delta^\epsilon(t,s)}&\nabla^{j,n}_{\nu_{\de D_\delta^\epsilon(t,s)}(y)}E(x_{\epsilon t}-y)
\,d\mathcal{H}^2(y)\\
&=\int_{\de D_\delta^\epsilon(t,s)}\!\!\Big( (\nu_{\de D_\delta^\epsilon(t,s)}(y))_n\de_{x_j}E(x_{\epsilon t}-y)
-(\nu_{\de D_\delta^\epsilon(t,s)}(y))_j\de_{x_n}E(x_{\epsilon t}-y)\Big)d\mathcal{H}^2(y)\\
&=\int_{D_\delta^\epsilon(t,s)} \big(\de_{y_j}\de_{y_n} E_{x_{\epsilon t}}-\de_{y_n} \de_{y_j}E_{x_{\epsilon t}}\big)(y)\,dy=0.
\end{split}
\end{equation}
Since $\de D_\delta^\epsilon(t,s)=(B_\delta(x_{\epsilon s})\cap \Sigma_{\epsilon s})
\cup (\partial B_\delta(x_{\epsilon s})\cap A^\epsilon_{t,s})$, from \eqref{aux1} we have 
\begin{equation}\Big|\int_{|x_{\epsilon s}-y_{\epsilon s}|\leq \delta}\!\!\nabla^{j,n}_{\nu_{\epsilon s}(y_{\epsilon s})} E(x_{\epsilon st}-y_{\epsilon s})\,d\upsigma_{\epsilon s}(y_{\epsilon s})\Big|
=\Big|\int_{\partial B_\delta(x_{\epsilon s})\cap A^\epsilon_{t,s}}\!\!\nabla^{j,n}_{\nu_{\de D_\delta^\epsilon(t,s)}(y)} E(x_{\epsilon t}-y)\,d\mathcal{H}^2(y)\Big|.
\end{equation}
Observe that
\begin{equation}\label{aux2}
\begin{split}
&\chi_{\partial B_\delta(x_{\epsilon s})\cap A^\epsilon_{t,s}}(y)\,\nabla^{j,n}_{\nu_{\de D_\delta^\epsilon(t,s)}(y)} E(x_{\epsilon t}-y)\,d\mathcal{H}^2(y)\\
&\qquad=\chi_{\partial B_\delta(x_{\epsilon s})\cap A^\epsilon_{t,s}}(y)
\Big((\nu_{\de D_\delta^\epsilon(t,s)}(y))_j\de_{y_n}\!E_{x_{\epsilon t}}(y)
-(\nu_{\de D_\delta^\epsilon(t,s)}(y))_n\de_{y_j}\!E_{x_{\epsilon t}}(y)\Big)\,d\mathcal{H}^2(y)\\
&\qquad\to\chi_{\partial B_\delta(x_{\S})\cap D(t,s)}(y)
\Big(\frac{(y-x_\S)_j}{|y-x_\S|}\de_{y_n}\!E_{x_{\S}}(y)
-\frac{(y-x_\S)_n}{|y-x_\S|}\de_{y_j}\!E_{x_{\S}}(y)\Big)\,d\mathcal{H}^2(y)=0
\end{split}
\end{equation}
when $\epsilon\to0$. The limit measure in \eqref{aux2} vanishes  because its density function corresponds to a tangential derivative of $E_{x_\S}$ on $\partial B_\delta(x_{\S})$, which is a constant function on $\de B_\delta (x_\S)$. 
Therefore, arguing as in the proof of \eqref{eqn:tnu convergence} but replacing \eqref{lim Kepsi} by \eqref{aux2}, we can resume that, now, \begin{equation}\lim_{\delta\to 0}\lim_{\epsilon\to 0}\mathscr{C}_{\epsilon,\delta}=0.\end{equation}
This yields \eqref{eqn:tjk convergence} and concludes the proof of \eqref{point limit}.

\subsection{A pointwise estimate of $|B_{\epsilon,\omega_3}g(x_\S,t)|$ by maximal operators}\label{meB}
\mbox{}

We begin this section by setting
\begin{equation}\label{CZ kernel2}
\text{$k(x):=\frac{x_j}{4\pi|x|^3}$\quad for $j=1,2,3$, $x=(x_1,x_2,x_3)\in\Rt\setminus\{0\}$.}
\end{equation}
In \eqref{CZ kernel1} we already introduced a kernel $k$ which, in fact, corresponds to the vectorial version of the ones introduced in \eqref{CZ kernel2}. So, by an abuse of notation, throughout this section we mean by $k(x)$ any of the components of the kernel given in \eqref{CZ kernel1}.

Note that $k(-x)=-k(x)$ for all $x\in\Rt\setminus\{0\}$ and, besides,
there exists $C>0$ such that 
\begin{equation}\label{Horm est}
\begin{split}
&|k(x-y)|\leq \frac{C}{|x-y|^2}\quad
\text{for all $x,y\in\Rt$ such that $|x-y|>0$,}\\
&|k(z-y)-k(x-y)|\leq C\frac{|z-x|}{|x-y|^3}\quad
\text{for all $x,y,z\in\Rt$ with $0<|z-x|\leq\frac{1}{2}|x-y|$.}
\end{split}
\end{equation}

As in Section \ref{pointwise B}, we are going to work componentwise. More precisely, in order to deal with the different components of $B_{\epsilon,\omega_3}g(x_\S,t)$ for $g\in L^2(\Sigma\times(-1,1))^4$, we are going to study the following scalar version. Given $0<\epsilon\leq\eta_0$, $g\in L^2(\Sigma\times(-1,1))$ and $(x_\S,t)\in\Sigma\times(-1,1)$, define
\begin{equation}\label{witb epsilon}
\begin{split}
&\witb_\epsilon g(x_\S,t):= u(t)\int_{-1}^1\int_\S k(x_\S + \epsilon t \nu (x_\S) -y_\S -\epsilon s \nu (y_\S))\\
&\hskip170pt \times  v(s)\det(1-\epsilon s W(y_\S)) g(y_\S ,s)\,d\upsigma (y_\S)\,ds,
\end{split}
\end{equation}
where $u$ and $v$ are as in \eqref{correc6} for some $0<\eta\leq\eta_0$.
It is clear that pointwise estimates of $|\witb_\epsilon g(x_\S,t)|$ for a given $g\in L^2(\Sigma\times(-1,1))$ directly transfer to pointwise estimates of $|B_{\epsilon,\omega_3}h(x_\S,t)|$ for a given $h\in L^2(\Sigma\times(-1,1))^4$, so we are reduced to estimate
$|\witb_\epsilon g(x_\S,t)|$ for $g\in L^2(\Sigma\times(-1,1))$.

A key ingredient to find those suitable pointwise estimates  is to relate $\witb_\epsilon$  to the Hardy-Littlewood maximal operator and some  maximal singular integral operators from Calder\'on-Zygmund theory.  The
Hardy-Littlewood maximal operator is given by 
\begin{equation}\label{max hardy}
M_*f(x_\S):=\sup_{\delta>0}\frac{1}{\upsigma(B_\delta(x_\S))}\int_{B_\delta(x_\S)}|f|\,d\upsigma,
\quad\text{$M_*:L^2(\Sigma)\to L^2(\Sigma)$ bounded,}
\end{equation}
see \cite[2.19 Theorem]{mattila} for a proof of the boundedness. 
The above mentioned maximal singular integral operators are
\begin{equation}\label{max sio}
T_{*}f(x_\S):=\sup_{\delta>0}\Big|\int_{|x_\S-y_\S|>\delta}k(x_\S-y_\S)f(y_\S)\,d\upsigma(y_\S)\Big|,
\quad\text{$T_*:L^2(\Sigma)\to L^2(\Sigma)$ bounded,}
\end{equation}
see  \cite[Proposition 4 bis]{David} for a proof of the boundedness.
We also introduce some integral versions of these maximal operators to connect them to the space $L^2(\Sigma\times(-1,1))$. Set 
\begin{equation}\label{max hardy sio}
\begin{split}
&\witm_*g(x_\S):=\Big(\int_{-1}^1 M_*(g(\cdot,s))(x_\S)^2\,ds\Big)^{1/2},
\quad\text{$\witm_*:L^2(\Sigma\times(-1,1))\to L^2(\Sigma)$ bounded},\\
&\witt_{*}g(x_\S):=\int_{-1}^1 T_*(g(\cdot,s))(x_\S)\,ds,
\quad\text{$\witt_*:L^2(\Sigma\times(-1,1))\to L^2(\Sigma)$ bounded}.
\end{split}
\end{equation}
Indeed, by Fubini's theorem and \eqref{max hardy},
\begin{equation}
\begin{split}
\|\witm_*g\|_{L^2(\Sigma)}^2
&=\int_\S\int_{-1}^1 M_*(g(\cdot,s))(x_\S)^2\,ds\,d\upsigma(x_\S)
=\int_{-1}^1 \|M_*(g(\cdot,s))\|_{L^2(\S)}^2\,ds\\
&\leq C \int_{-1}^1 \|g(\cdot,s)\|_{L^2(\S)}^2\,ds
= C\|g\|_{L^2(\S\times(-1,1))}^2.
\end{split}
\end{equation}
By Cauchy-Schwarz inequality, Fubini's theorem and \eqref{max sio}, we also see that $\witt_*$ is  bounded, so \eqref{max hardy sio} is fully justified. 

Let us focus for a moment on the boundedness of $B_0(a)$ stated in \eqref{ABC espacios2}. The fact that, for $g\in L^2(\S\times(-1,1))^4$, the limit in the definition of $(B_{0}(a)g)(x_\S,t)$ exists for almost every $(x_\S,t)\in\S\times(-1,1)$ is a consequence of the decomposition (see \eqref{eqn:break phi})
\begin{equation}
\phi^a=\omega_{1}^a+\omega_{2}^a+\omega_{3},
\end{equation} 
the integrals of fractional type on bounded sets in the case of $\omega_{1}^a$ and $\omega_{2}^a$ and, for $\omega_3$,  that 
\begin{equation}\label{correc7}
\lim_{\epsilon\to0}\int_{|x_\S-y_\S|>\epsilon}k(x_\S-y_\S)f(y_\S)\,d\upsigma(y_\S)\quad\text{exists for $\upsigma$-almost every $x_\S\in\S$}
\end{equation}
if $f\in L^2(\Sigma)$
(see \cite[20.27 Theorem]{mattila} for a proof) and that 
\begin{equation}
\int_{-1}^1v(s) g(\cdot,s)\,ds
\in L^2(\Sigma)^4.
\end{equation} 
Of course, \eqref{correc7} directly applies to $B_{0,\omega_3}$ (see \eqref{eqn:break phi2} for the definition).
From the boundedness of $\witt_*$ and working component by component, we easily see that $B_{0,\omega_3}$ is bounded in $L^2(\S\times(-1,1))^4$. By the comments regarding $B_{0,\omega_1^a}$ and $B_{0,\omega_2^a}$ from the paragraph which contains \eqref{0002}, we also get that $B_{0}(a)$ is bounded in $L^2(\S\times(-1,1))^4$, which gives \eqref{ABC espacios2} in this case.

With the maximal operators at hand, we proceed to pointwise estimate 
$|\witb_\epsilon g(x_\S,t)|$ for $g\in L^2(\Sigma\times(-1,1))$.
Set 
\begin{equation}\label{witb 0bis}
g_\epsilon(y_\S,s):=v(s)\det(1-\epsilon s W(y_\S)) g(y_\S ,s).
\end{equation}
Then, since the eigenvalues of $W$ are uniformly bounded by Proposition \ref{weingarten map}, there exists $C>0$ only depending on $\eta_0$ such that
\begin{equation}\label{witb 0}
|g_\epsilon(y_\S,s)|
\leq C\|v\|_{L^\infty(\R)}|g(y_\S,s)|
\quad\text{for all }0<\epsilon\leq\eta_0,\, (y_\S,s)\in\S\times(-1,1).
\end{equation}
Besides, the regularity and boundedness of $\S$ implies the existence of $L>0$ such that 
\begin{equation}\label{witb 00}
|\nu(x_\S)-\nu(y_\S)|\leq L|x_\S-y_\S|\quad\text{for all } x_\S,y_\S\in\S.\end{equation}

We make the following splitting of 
$\witb_\epsilon g(x_\S,t)$ (see \eqref{witb epsilon} for the definition):
\begin{equation}\label{witb 1}
\begin{split}
\witb_\epsilon g(x_\S,t)\!
&=u(t)\int_{-1}^1\int_{|x_\S-y_\S|\leq4\epsilon|t-s|} k(x_\S + \epsilon t \nu (x_\S) -y_\S -\epsilon s \nu (y_\S))g_\epsilon(y_\S,s)\,d\upsigma (y_\S)\,ds\\
&\quad+u(t)\int_{-1}^1\int_{|x_\S-y_\S|>4\epsilon|t-s|} 
\Big(k(x_\S + \epsilon t \nu (x_\S) -y_\S -\epsilon s \nu (y_\S))\\
&\hskip110pt -k(x_\S + \epsilon s \nu (x_\S) -y_\S -\epsilon s \nu (y_\S))\Big)
g_\epsilon(y_\S,s)\,d\upsigma (y_\S)\,ds\\
&\quad+u(t)\int_{-1}^1\int_{|x_\S-y_\S|>4\epsilon|t-s|} 
\Big(k(x_\S + \epsilon s(\nu(x_\S)-\nu (y_\S)) -y_\S)
-k(x_\S-y_\S)\Big)\\
&\hskip268pt\times g_\epsilon(y_\S,s)\,d\upsigma (y_\S)\,ds\\
&\quad+u(t)\int_{-1}^1\int_{|x_\S-y_\S|>4\epsilon|t-s|} 
k(x_\S-y_\S)g_\epsilon(y_\S,s)\,d\upsigma (y_\S)\,ds\\
&=:\witb_{\epsilon,1}g(x_\S,t)+\witb_{\epsilon,2}g(x_\S,t)
+\witb_{\epsilon,3}g(x_\S,t)+\witb_{\epsilon,4}g(x_\S,t).
\end{split}
\end{equation}
We are going to estimate the four terms on the right hand side of \eqref{witb 1} separately.

Concerning $\witb_{\epsilon,1}g(x_\S,t)$, note that 
\begin{equation}\epsilon|t-s|=\dt(x_\S + \epsilon t \nu (x_\S),\S_{\epsilon s})
\leq|x_\S + \epsilon t \nu (x_\S) -y_\S -\epsilon s \nu (y_\S))|\end{equation}
for all $(y_\S,s)\in\Sigma\times(-1,1)$, thus
$|k(x_\S + \epsilon t \nu (x_\S) -y_\S -\epsilon s \nu (y_\S))|
\leq\frac{1}{\epsilon^2|t-s|^2}$ by \eqref{Horm est}, and then
\begin{equation}\label{witb 2}
\begin{split}
|\witb_{\epsilon,1} g(x_\S,t)|
&\leq \|u\|_{L^\infty(\R)}\int_{-1}^1\frac{1}{\epsilon^2|t-s|^2}
\int_{|x_\S-y_\S|\leq4\epsilon|t-s|}|g_\epsilon(y_\S,s)|\,d\upsigma (y_\S)\,ds\\
&\leq C\|u\|_{L^\infty(\R)}\int_{-1}^1 M_*(g_\epsilon(\cdot,s))(x_\S)\,ds
\leq C\|u\|_{L^\infty(\R)}\|v\|_{L^\infty(\R)}
\witm_*g(x_\S),
\end{split}
\end{equation}
where we used the Cauchy-Schwarz inequality and \eqref{witb 0} in the last inequality above.

For the case of $\witb_{\epsilon,2}g(x_\S,t)$, we split the integral over $\S$ on dyadic annuli as follows. Set 
\begin{equation}\label{witb 6}
\begin{split}
N:=\Big[\Big|\log_2\Big(\frac{\diam(\Omega_{\eta_0})}{\epsilon|t-s|}\Big)\Big|\Big]+1
\end{split}
\end{equation}
for $t\neq s$, where $[\,\cdot\,]$ denotes the integer part. Then, $2^N\epsilon|t-s|>\diam(\Omega_{\eta_0})$ and
\begin{equation}\label{witb 4}
\begin{split}
|\witb_{\epsilon,2}g(x_\S,t)|
&\leq\|u\|_{L^\infty(\R)}\int_{-1}^1\sum_{n=2}^N
\int_{2^{n+1}\epsilon|t-s|\geq|x_\S-y_\S|>2^n\epsilon|t-s|} 
\cdots\,\,d\upsigma (y_\S)\,ds,
\end{split}
\end{equation}
where ``$\cdots$'' means 
$
\big|k(x_\S + \epsilon t \nu (x_\S) -y_\S -\epsilon s \nu (y_\S))
-k(x_\S + \epsilon s \nu (x_\S) -y_\S -\epsilon s \nu (y_\S))\big|
|g_\epsilon(y_\S,s)|.
$
By \eqref{witb 00},
\begin{equation}
\begin{split}
(1-\eta_0 L)|x_\S-y_\S|
&\leq|x_\S-y_\S|-\eta_0|\nu(x_\S)-\nu(y_\S)|\\
&\leq|x_\S+\epsilon s\nu(x_\S)-y_\S-\epsilon s\nu(y_\S)|\\
&\leq|x_\S-y_\S|+\eta_0|\nu(x_\S)-\nu(y_\S)|\leq(1+\eta_0 L)|x_\S-y_\S|,
\end{split}
\end{equation}
thus if we take $\eta_0\leq\frac{1}{2L}$ we get
\begin{equation}\label{witb 3}
\frac{1}{2}|x_\S-y_\S|
\leq|x_\S+\epsilon s\nu(x_\S)-y_\S-\epsilon s\nu(y_\S)|
\leq2|x_\S-y_\S|.
\end{equation}
Besides, for $2^{n+1}\epsilon|t-s|\geq|x_\S-y_\S|>2^n\epsilon|t-s|$, using \eqref{witb 3} we see that
\begin{equation}\label{witb 5}
\begin{split}
|x_\S + \epsilon t \nu (x_\S) -(x_\S + \epsilon s \nu (x_\S))|
&=\epsilon|t-s|<2^{-n}|x_\S-y_\S|\\
&\leq 2^{-n+1}|x_\S+\epsilon s\nu(x_\S)-y_\S-\epsilon s\nu(y_\S)|\\
&\leq\frac{1}{2}|x_\S+\epsilon s\nu(x_\S)-y_\S-\epsilon s\nu(y_\S)|
\end{split}
\end{equation}
for all $n=2,\ldots,N$. Therefore, combining  \eqref{witb 5}, \eqref{Horm est} and \eqref{witb 3} we finally get
\begin{equation}
\begin{split}
|k(x_\S + \epsilon t \nu (x_\S) &-y_\S -\epsilon s \nu (y_\S))
-k(x_\S + \epsilon s \nu (x_\S) -y_\S -\epsilon s \nu (y_\S))\big|\\
&\leq C\frac{|x_\S + \epsilon t \nu (x_\S)-(x_\S + \epsilon s \nu (x_\S))|}
{|x_\S+\epsilon s\nu(x_\S)-y_\S-\epsilon s\nu(y_\S)|^3}
\leq\frac{C\epsilon|t-s|}{|x_\S-y_\S|^3}
<\frac{C}{2^{3n}\epsilon^2|t-s|^2}
\end{split}
\end{equation}
for all $s,t\in(-1,1)$, $0<\epsilon\leq\eta_0$, $n=2,\ldots,N$ and 
$2^{n+1}\epsilon|t-s|\geq|x_\S-y_\S|>2^n\epsilon|t-s|$. Plugging this estimate into \eqref{witb 4} we obtain
\begin{equation}\label{witb 11}
\begin{split}
|\witb_{\epsilon,2}g(x_\S,&t)|
\leq C\|u\|_{L^\infty(\R)}\int_{-1}^1\sum_{n=2}^N
\int_{2^{n+1}\epsilon|t-s|\geq|x_\S-y_\S|>2^n\epsilon|t-s|} 
\frac{|g_\epsilon(y_\S,s)|}{2^{3n}\epsilon^2|t-s|^2}\,d\upsigma (y_\S)\,ds\\
&\leq C\|u\|_{L^\infty(\R)}\int_{-1}^1\sum_{n=2}^N\frac{1}{2^n}
\int_{|x_\S-y_\S|\leq2^{n+1}\epsilon|t-s|} 
\frac{|g_\epsilon(y_\S,s)|}{(2^{n+1}\epsilon|t-s|)^2}\,d\upsigma (y_\S)\,ds\\
&\leq C\|u\|_{L^\infty(\R)}\sum_{n=2}^\infty\!\frac{1}{2^n}
\int_{-1}^1M_*(g_\epsilon(\cdot,s))(x_\S)\,ds
\leq C\|u\|_{L^\infty(\R)}\|v\|_{L^\infty(\R)}
\witm_*g(x_\S),
\end{split}
\end{equation} 
where we used the Cauchy-Schwarz inequality and \eqref{witb 0} in the last inequality above.

Let us deal now with $\witb_{\epsilon,3}g(x_\S,t)$. Since $0<\epsilon\leq\eta_0$ and $s\in(-1,1)$, if we take $\eta_0\leq\frac{1}{2L}$ as before, from \eqref{witb 00} we see that
\begin{equation}
\begin{split}
\big|\big(x_\S + \epsilon s(\nu(x_\S)-\nu (y_\S))\big)-x_\S\big|
=\epsilon |s||\nu(x_\S)-\nu (y_\S)|\leq\frac{1}{2}|x_\S-y_\S|,
\end{split}
\end{equation}
and then, by \eqref{Horm est},
\begin{equation}\label{witb 7}
\begin{split}
\big|k(x_\S + \epsilon s(\nu(x_\S)-\nu (y_\S)) -y_\S)
-k(x_\S-y_\S)\big|
\leq C\frac{\epsilon |s||\nu(x_\S)-\nu (y_\S)|}{|x_\S-y_\S|^3}
\leq \frac{C\epsilon}{|x_\S-y_\S|^2}.
\end{split}
\end{equation}
Splitting the integral which defines $\witb_{\epsilon,3}g(x_\S,t)$ into dyadic annuli as in \eqref{witb 4}, and using \eqref{witb 7}, \eqref{witb 0} and \eqref{witb 6}, we get 
\begin{equation}\label{witb 8}
\begin{split}
|\witb_{\epsilon,3}g(x_\S,t)|
&\leq C\|u\|_{L^\infty(\R)}\int_{-1}^1\sum_{n=2}^N
\epsilon\int_{2^{n+1}\epsilon|t-s|\geq|x_\S-y_\S|>2^n\epsilon|t-s|} 
\frac{|g_\epsilon(y_\S,s)|}{|x_\S-y_\S|^2}
\,d\upsigma (y_\S)\,ds\\
&\leq C\|u\|_{L^\infty(\R)}\int_{-1}^1\epsilon\sum_{n=2}^N
M_*(g_\epsilon(\cdot,s))(x_\S)\,ds\\
&\leq C\|u\|_{L^\infty(\R)}\|v\|_{L^\infty(\R)}
\int_{-1}^1\epsilon\,\Big|\log_2\Big(\frac{\diam(\Omega_{\eta_0})}{\epsilon|t-s|}\Big)\Big|
M_*(g(\cdot,s))(x_\S)\,ds.
\end{split}
\end{equation}
Note that 
\begin{equation}\epsilon\,\Big|\log_2\Big(\frac{\diam(\Omega_{\eta_0})}{\epsilon|t-s|}\Big)\Big|
\leq \epsilon\big(C+|\log_2\epsilon|+|\log_2|t-s||\big)
\leq C\big(1+|\log_2|t-s||\big)\end{equation}
for all $0<\epsilon\leq\eta_0$,
where $C>0$ only depends on $\eta_0$. Hence, from \eqref{witb 8} and Cauchy-Schwarz inequality, we obtain
\begin{equation}\label{witb 9}
\begin{split}
|\witb_{\epsilon,3}g(x_\S,t)|
&\leq C\|u\|_{L^\infty(\R)}\|v\|_{L^\infty(\R)}
\int_{-1}^1\big(1+|\log_2|t-s||\big)
M_*(g(\cdot,s))(x_\S)\,ds\\
&\leq C\|u\|_{L^\infty(\R)}\|v\|_{L^\infty(\R)}
\Big(\int_{-1}^1\big(1+|\log_2|t-s||\big)^2\,ds\Big)^{1/2}
\witm_*g(x_\S)\\
&\leq C\|u\|_{L^\infty(\R)}\|v\|_{L^\infty(\R)}
\witm_*g(x_\S),
\end{split}
\end{equation}
where we also used that $t\in(-1,1)$, so 
$\int_{-1}^1\big(1+|\log_2|t-s||\big)^2\,ds\leq
C\big(1+\int_{0}^2|\log_2r|^2\,dr\big)<+\infty$, in the last inequality above.

The term $|\witb_{\epsilon,4}g(x_\S,t)|$ can be estimated using the maximal operator $\witt_*$ as follows. Let $\lambda_1(y_\S)$ and $\lambda_2(y_\S)$ denote the eigenvalues of the Weingarten map $W(y_\S)$. By definition,
\begin{equation}
\begin{split}
g_\epsilon(y_\S,s)&=v(s)\det(1-\epsilon s W(y_\S)) g(y_\S ,s)\\
&=v(s)\big(1+\epsilon^2s^2\lambda_1(y_\S)\lambda_2(y_\S)-\epsilon s\lambda_1(y_\S)-\epsilon s\lambda_2(y_\S)\big) g(y_\S ,s).
\end{split}
\end{equation}
Therefore, the triangle inequality yields
\begin{equation}\label{witb 10}
\begin{split}
|\witb_{\epsilon,4}&g(x_\S,t)|\leq \|u\|_{L^\infty(\R)}\|v\|_{L^\infty(\R)}
\int_{-1}^1\Big(T_*(g(\cdot,s))(x_\S)
+\eta_0^2T_*(\lambda_1\lambda_2g(\cdot,s))(x_\S)\\
&\hskip165pt+\eta_0 T_*(\lambda_1g(\cdot,s))(x_\S)
+\eta_0 T_*(\lambda_2g(\cdot,s))(x_\S)\Big)\,ds\\
&\leq C\|u\|_{L^\infty(\R)}\|v\|_{L^\infty(\R)}
\big(\witt_*g(x_\S)+\witt_*(\lambda_1\lambda_2g)(x_\S)
+\witt_*(\lambda_1g)(x_\S)+\witt_*(\lambda_2g)(x_\S)\big).
\end{split}
\end{equation}

Combining \eqref{witb 1}, \eqref{witb 2}, \eqref{witb 11}, \eqref{witb 9} and \eqref{witb 10} and taking the supremum on $\epsilon$ we finally get that
\begin{equation}\label{witb 12}
\begin{split}
\sup_{0<\epsilon\leq\eta_0}|\witb_\epsilon g(x_\S,t)|
&\leq C\|u\|_{L^\infty(\R)}\|v\|_{L^\infty(\R)}
\big(\witm_*g(x_\S)+\witt_*g(x_\S)\\
&\hskip90pt+\witt_*(\lambda_1\lambda_2g)(x_\S)
+\witt_*(\lambda_1g)(x_\S)+\witt_*(\lambda_2g)(x_\S)\big),
\end{split}
\end{equation}
where $C>0$ only depends on $\eta_0$. Define 
\begin{equation}
\witb_*g(x_\S,t):=\sup_{0<\epsilon\leq\eta_0}|\witb_\epsilon g(x_\S,t)|\quad\text{ for $(x_\S,t)\in\S\times(-1,1)$.}
\end{equation} 
Then, from \eqref{witb 12}, the boundedness of $\witm_*$ and $\witt_*$ from $L^2(\Sigma\times(-1,1))$ to $L^2(\Sigma)$ (see \eqref{max hardy sio}) and the fact that 
$\|\lambda_1\|_{L^\infty(\S)}$ and 
$\|\lambda_2\|_{L^\infty(\S)}$ are finite by Proposition \ref{weingarten map}, we easily conclude that there exists $C>0$ only depending on $\eta_0$ such that
\begin{equation}\label{witb 13}
\begin{split}
\|\witb_*g\|_{L^2(\S\times(-1,1))}
\leq C\|u\|_{L^\infty(\R)}\|v\|_{L^\infty(\R)}
\|g\|_{L^2(\S\times(-1,1))}.
\end{split}
\end{equation}

\subsection{$B_{\epsilon,\omega_3}\to B_{0,\omega_3}+B'$ in the strong sense when $\epsilon\to0$ and conclusion of the proof of \eqref{conv B th}}
\label{cpB}
\mbox{}

To begin this section, we present a standard result in harmonic analysis about the existence of limit almost everywhere for a sequence of operators acting on a fixed function and its convergence in strong sense. General statements can be found in 
\cite[Theorem 2.2 and the remark below it]{duoandikoetxea} and 
\cite[Proposition 6.2]{torchinsky}, for example.
For the sake of completeness, here we present a concrete version with its proof.
\begin{lemma}\label{Calderon lemma}
Let $b\in\N$ and $(X,\mu_X)$ and $(Y,\mu_Y)$ be two Borel measure spaces. Let $\{W_\epsilon\}_{0<\epsilon\leq\eta_0}$ be a family of bounded linear operators from $L^2(\mu_X)^b$ to $L^2(\mu_Y)^b$ 
 such that, if 
\begin{equation}
W_*g(y):=\sup_{0<\epsilon\leq\eta_0}|W_\epsilon g(y)|
\quad\text{for $g\in L^2(\mu_X)^b$ and $y\in Y$,
\quad\text{then }$W_*:L^2(\mu_X)^b\to L^2(\mu_Y)$}
\end{equation}
is a bounded sublinear operator.
Suppose that for any $g\in S$, where $S\subset L^2(\mu_X)^b$ is a  dense subspace, $\lim_{\epsilon\to0}W_\epsilon g(y)$ exists for $\mu_Y$-a.e. $y\in Y$. Then, for any $g\in L^2(\mu_X)^b$, $\lim_{\epsilon\to0}W_\epsilon g(y)$ exists for $\mu_Y$-a.e. $y\in Y$ and 
\begin{equation}\label{abstract strong conv}
\lim_{\epsilon\to 0}\big\|W_\epsilon g-\lim_{\delta\to0}W_\delta g\big\|_{ L^2(\mu_Y)^b}=0.
\end{equation}
In particular, $\lim_{\epsilon\to0}W_\epsilon$ defines a bounded operator from $L^2(\mu_X)^b$ to $L^2(\mu_Y)^b$.
\end{lemma}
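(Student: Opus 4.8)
The plan is to run the classical Banach principle argument via an oscillation operator. For $g\in L^2(\mu_X)^b$ and $y\in Y$, set
\[
\Omega g(y):=\limsup_{\epsilon,\delta\to 0}\big|W_\epsilon g(y)-W_\delta g(y)\big|.
\]
First I would record two elementary facts. (i) Since $|W_\epsilon g(y)-W_\delta g(y)|\le|W_\epsilon g(y)|+|W_\delta g(y)|\le 2W_*g(y)$, we get $\Omega g(y)\le 2W_*g(y)$ and hence $\|\Omega g\|_{L^2(\mu_Y)}\le 2\|W_*\|\,\|g\|_{L^2(\mu_X)^b}$. (ii) By linearity of each $W_\epsilon$, for any $h\in L^2(\mu_X)^b$ one has $W_\epsilon g-W_\delta g=(W_\epsilon(g-h)-W_\delta(g-h))+(W_\epsilon h-W_\delta h)$, so $\Omega$ is subadditive: $\Omega g(y)\le\Omega(g-h)(y)+\Omega h(y)$. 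If moreover $h\in S$, then $\lim_{\epsilon\to0}W_\epsilon h(y)$ exists for $\mu_Y$-a.e.\ $y$, i.e.\ $\Omega h=0$ a.e., and therefore $\Omega g(y)\le\Omega(g-h)(y)$ for $\mu_Y$-a.e.\ $y$.

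Next I would carry out the density argument. Fix $g\in L^2(\mu_X)^b$ and $\lambda>0$. By Chebyshev's inequality together with (i) and (ii), for every $h\in S$,
\[
\mu_Y\big(\{y:\Omega g(y)>\lambda\}\big)
\le\mu_Y\big(\{y:\Omega(g-h)(y)>\lambda\}\big)
\le\frac{1}{\lambda^2}\,\|\Omega(g-h)\|_{L^2(\mu_Y)}^2
\le\frac{4\|W_*\|^2}{\lambda^2}\,\|g-h\|_{L^2(\mu_X)^b}^2 .
\]
Since $S$ is dense in $L^2(\mu_X)^b$, the last quantity can be made arbitrarily small, so $\mu_Y(\{\Omega g>\lambda\})=0$ for every $\lambda>0$, i.e.\ $\Omega g=0$ $\mu_Y$-a.e. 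By the Cauchy criterion in $\mathbb{C}^b$ this means precisely that $Wg(y):=\lim_{\epsilon\to0}W_\epsilon g(y)$ exists for $\mu_Y$-a.e.\ $y$. Restricting to a countable sequence $\epsilon_n\to0$ exhibits $Wg$ as an a.e.\ pointwise limit of measurable functions, hence $\mu_Y$-measurable, and the bound $|Wg|\le W_*g\in L^2(\mu_Y)$ gives $Wg\in L^2(\mu_Y)^b$.

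Finally, for the norm convergence \eqref{abstract strong conv} I would invoke dominated convergence: $|W_\epsilon g(y)-Wg(y)|^2\le 4\,W_*g(y)^2$ with $W_*g\in L^2(\mu_Y)$, while $|W_\epsilon g(y)-Wg(y)|^2\to0$ for $\mu_Y$-a.e.\ $y$ along any sequence $\epsilon\to0$; hence $\|W_\epsilon g-Wg\|_{L^2(\mu_Y)^b}\to0$. The map $g\mapsto Wg$ is linear as a pointwise limit of the linear maps $W_\epsilon$, and $\|Wg\|_{L^2(\mu_Y)^b}\le\|W_*g\|_{L^2(\mu_Y)}\le\|W_*\|\,\|g\|_{L^2(\mu_X)^b}$, so it is bounded, which is the last assertion. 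The only points requiring a little care are the measurability of $\Omega g$ and $Wg$ (handled by passing to a countable sequence of $\epsilon$'s) and the subadditivity in (ii) (which uses only linearity of the $W_\epsilon$); I do not expect any substantive obstacle in this lemma, it is the harmonic-analysis workhorse that transfers the pointwise convergence on a dense set, proved earlier for $B_{\epsilon,\omega_3}$, into a.e.\ and strong convergence.
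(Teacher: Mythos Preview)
Your proof is correct and follows essentially the same approach as the paper: both use the maximal bound $\Omega g\le 2W_*g$ (the paper writes it as $|\limsup W_\epsilon g-\liminf W_\epsilon g|$), combine it with linearity, density of $S$ and Chebyshev's inequality to get $\mu_Y(\{\Omega g>\lambda\})=0$, and then invoke dominated convergence (with dominating function $2W_*g$) for \eqref{abstract strong conv}. Your formulation via the oscillation operator and the Cauchy criterion is arguably a bit cleaner for $\mathbb{C}^b$-valued functions than the paper's $\limsup-\liminf$ phrasing, but the argument is the same.
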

\begin{proof}
We start by proving that, for any $g\in L^2(\mu_X)^b$, $\lim_{\epsilon\to0}W_\epsilon g(y)$ exists for $\mu_Y$-a.e. $y\in Y$. Take $g_k\in S$ such that $\|g_k-g\|_{L^2(\mu_X)^b}\to0$ for $k\to\infty$, and fix  $\lambda>0$. Since $\lim_{\epsilon\to0}W_\epsilon g_k(y)$ exists for $\mu_Y$-a.e. $y\in Y$, Chebyshev inequality yields
\begin{equation}
\begin{split}
\mu_Y\Big(\Big\{y\in Y:&\,\Big|\limsup_{\epsilon\to0}W_\epsilon g(y)-\liminf_{\epsilon\to0}W_\epsilon g(y)\Big|>\lambda\Big\}\Big)\\
&\leq\mu_Y\Big(\Big\{y\in Y:\,\Big|\limsup_{\epsilon\to0}
W_\epsilon (g-g_k)(y)\Big|
+\Big|\liminf_{\epsilon\to0}W_\epsilon (g_k-g)(y)\Big|>\lambda\Big\}\Big)\\
&\leq\mu_Y(\{y\in Y:\,2W_* (g-g_k)(y)>\lambda\})\\
&\leq\frac{4}{\lambda^2}\,\|W_* (g-g_k)\|^2_{L^2(\mu_Y)}
\leq\frac{C}{\lambda^2}\,\|g-g_k\|^2_{L^2(\mu_X)^b}.
\end{split}
\end{equation}
Letting $k\to\infty$ we deduce that 
\begin{equation}\mu_Y\Big(\Big\{y\in Y:\,\Big|\limsup_{\epsilon\to0}W_\epsilon g(y)-\liminf_{\epsilon\to0}W_\epsilon g(y)\Big|>\lambda\Big\}\Big)=0.\end{equation} Since this holds for all $\lambda>0$, we finally get that $\lim_{\epsilon\to0}W_\epsilon g(y)$ exists $\mu_Y$-a.e.

Note that $|W_\epsilon g(y)-\lim_{\delta\to0}W_\delta g(y)|
\leq 2W_*g(y)$ and $W_*g\in L^2(\mu_Y)$. Thus, \eqref{abstract strong conv} follows by the dominated convergence theorem. The last statement in the lemma is also a consequence of the boundedness of $W_*$.
\end{proof}

Thanks to Lemma \ref{Calderon lemma} and the results in Sections \ref{pointwise B} and \ref{meB}, we are ready to conclude the proof of 
\eqref{conv B th}.
As we said before \eqref{point limit}, to obtain \eqref{conv B th} we only need to show that
$B_{\epsilon,\omega_3}\to B_{0,\omega_3}+B'$ in the strong sense when $\epsilon\to0$. From \eqref{point limit}, we know that
\begin{equation}
\lim_{\epsilon\to0}B_{\epsilon,\omega_3}g(x_\S,t)
=B_{0,\omega_3}g(x_\S,t)
+B'g(x_\S,t)\quad\text{for almost all }(x_\S,t)\in\S\times(-1,1)
\end{equation}
and all $g\in L^\infty(\S\times(-1,1))^4$ such that 
$\sup_{|t|<1}|g(x_\S,t)-g(y_\S,t)|\leq C_g|x_\S-y_\S|$ for all $x_\S,\,y_\S\in\S$ and some $C_g>0$ (it may depend on $g$). Note also that this set of functions $g$ is dense in $L^2(\S\times(-1,1))^4$. Besides, thanks to \eqref{witb 13} we see that, if $\eta_0>0$ is small enough and we set
\begin{equation}
B_{*,\omega_3}g(x_\S,t):=\sup_{0<\epsilon\leq\eta_0}
|B_{\epsilon,\omega_3}g(x_\S,t)|\quad\text{ for $(x_\S,t)\in\S\times(-1,1)$,}
\end{equation}
then there exists $C>0$ only depending on $\eta_0$ such that
\begin{equation}\label{remark eq1_}
\begin{split}
\|B_{*,\omega_3}g\|_{L^2(\S\times(-1,1))}
\leq C\|u\|_{L^\infty(\R)}\|v\|_{L^\infty(\R)}
\|g\|_{L^2(\S\times(-1,1))^4}.
\end{split}
\end{equation}
Therefore, from Lemma \ref{Calderon lemma} we get that, for any $g\in L^2(\S\times(-1,1))^4$,
the pointwise limit $\lim_{\epsilon\to0}B_{\epsilon,\omega_3}g(x_\S,t)$ exists for almost every $(x_\S,t)\in\S\times(-1,1)$. Recall also that 
$B_{0,\omega_3}+B'$ is bounded in $L^2(\S\times(-1,1))^4$ (see the comment before \eqref{witb 0bis} for $B_{0,\omega_3}$, the case of $B'$ is trivial),
so one can easily adapt the proof of Lemma \ref{Calderon lemma} to also show that, for any $g\in L^2(\S\times(-1,1))^4$,
\begin{equation}
\lim_{\epsilon\to0}B_{\epsilon,\omega_3}g(x_\S,t)
=B_{0,\omega_3}g(x_\S,t)
+B'g(x_\S,t)\quad\text{for almost all }(x_\S,t)\in\S\times(-1,1).
\end{equation}
Finally, \eqref{abstract strong conv} in Lemma \ref{Calderon lemma} yields
\begin{equation}
\lim_{\epsilon\to 0}\|(B_{\epsilon,\omega_3} -B_{0,\omega_3}-B')g\|_{L^2(\S\times(-1,1))^4}=0\quad\text{for all }g\in L^2(\S\times(-1,1))^4,
\end{equation}
which is the required strong convergence of $B_{\epsilon,\omega_3}$ to 
$B_{0,\omega_3}+B'$. This finishes the proof of \eqref{conv B th}.

\section{Proof of \eqref{convergence A}: $A_\epsilon (a)\to A_0(a)$ in the strong sense when $\epsilon\to0$}\label{ss A}
Recall from \eqref{ABCepsilon} and \eqref{limit operators defi} that $A_\epsilon(a)$  with $0<\epsilon\leq\eta_0$ and  $A_0(a)$ are defined  by
\begin{equation}
\begin{split}
&(A_\epsilon(a)g)(x)=\int_{-1}^1\int_\Sigma\phi^a(x-y_\S - \epsilon s \nu (y_\S))v(s) \det(1-\epsilon s W(y_\S)) g(y_\S ,s)\,d\upsigma (y_\S)\,ds,\\
&(A_0(a)g)(x)=\int_{-1}^1\int_\Sigma\phi^a(x-y_\S)v(s)g(y_\S ,s)\,d\upsigma (y_\S)\,ds.
\end{split}
\end{equation}
We already know that $A_\epsilon(a)$ is bounded from $L^2(\Sigma\times(-1,1))^4$ to $L^2(\Rt)^4$. To show  the boundedness of $A_0(a)$ (and conclude the proof of \eqref{ABC espacios2}) just note that, by Fubini's theorem, for every $x\in\Rt\setminus\S$ we have
\begin{equation}
\begin{split}
(A_0(a)g)(x)=\int_\Sigma\phi^a(x-y_\S)\Big(\int_{-1}^1v(s)g(y_\S ,s)\,ds\Big)d\upsigma (y_\S),
\end{split}
\end{equation}
and $\int_{-1}^1v(s)g(\cdot,s)\,ds\in L^2(\S)^4$ if $g\in L^2(\S\times(-1,1))^4$. Since $a\in\C\setminus\R$, 
\cite[Lemma 2.1]{amv1} shows that $A_0(a)$ is bounded from $L^2(\Sigma\times(-1,1))^4$ to $L^2(\Rt)^4$.

We begin the proof of \eqref{convergence A} by splitting 
\begin{equation}\label{0002A000}
A_\epsilon(a)g
=\chi_{\Rt\setminus\Omega_{\eta_0}}A_\epsilon(a)g
+\chi_{\Omega_{\eta_0}}A_\epsilon(a)g.
\end{equation}

Let us treat first the case of $\chi_{\Rt\setminus\Omega_{\eta_0}}A_\epsilon(a)$. As we said before, since $a\in \C\setminus\R$, the components of $\phi^a(x)$ decay exponentially when $|x|\to\infty$. In particular, there exist $C,r>0$ only depending on $a$ and $\eta_0$ such that 
\begin{equation}\label{A strong convergence 1}
|\phi^a(x)|,|\partial\phi^a(x)|
\leq Ce^{-r|x|}\quad\text{for all }|x|\geq \frac{\eta_0}{2},
\end{equation}
where the left hand side of \eqref{A strong convergence 1} means the absolute value of any component of the matrix $\phi^a(x)$ and of any first order derivative of it, respectively. 

Note that $\eta_0=\dt(\Rt\setminus\Omega_{\eta_0},\S)$. Hence, if $x\in\Rt\setminus\Omega_{\eta_0}$, $y_\S\in\S$, $0\leq\epsilon\leq\frac{\eta_0}{2}$ and $s\in(-1,1)$ then, for any $0\leq q\leq1$,
\begin{equation}\label{A strong convergence 2}
\begin{split}
|q(x-y_\S - \epsilon s \nu (y_\S))+(1-q)&(x-y_\S)|
=|x-y_\S-q\epsilon s \nu (y_\S)|\\
&\geq|x-y_\S|-q\epsilon |s|
\geq|x-y_\S|-\frac{\eta_0}{2}
\geq\frac{|x-y_\S|}{2}\geq\frac{\eta_0}{2}.
\end{split}
\end{equation}
Thus \eqref{A strong convergence 1} applies to 
$[x,y_\S]_q:=q(x-y_\S - \epsilon s \nu (y_\S))+(1-q)(x-y_\S)$, and a combination of the mean value theorem and \eqref{A strong convergence 2} gives
\begin{equation}\label{correc9}
\begin{split}
|\phi^a(x-y_\S - \epsilon s \nu (y_\S))-\phi^a(x-y_\S)|
\leq\epsilon\max_{0\leq q\leq1}|\partial\phi^a([x,y_\S]_q)|
\leq C\epsilon e^{-\frac{r}{2}|x-y_\S|}.
\end{split}
\end{equation}

Set 
$\widetilde{g_\epsilon}(y_\S,s):=\det(1-\epsilon s W(y_\S)) g(y_\S ,s).$
On one hand, from \eqref{correc9}, Proposition \ref{weingarten map} and Cauchy-Schwarz inequality, we get that
\begin{equation}
\begin{split}
\chi_{\Rt\setminus\Omega_{\eta_0}}(x)|
(A_\epsilon(a)&g)(x)-(A_0(a)g_\epsilon)(x)|\\
&\leq C\|v\|_{L^\infty(\R)}\chi_{\Rt\setminus\Omega_{\eta_0}}(x)\int_{-1}^1\int_\Sigma\epsilon e^{-\frac{r}{2}|x-y_\S|}|\widetilde{g_\epsilon}(y_\S ,s)|\,d\upsigma (y_\S)\,ds\\
&\leq C\epsilon\|v\|_{L^\infty(\R)}\|\widetilde{g_\epsilon}\|_{L^2(\S\times(-1,1))^4}
\chi_{\Rt\setminus\Omega_{\eta_0}}(x)
\Big(\int_\Sigma e^{-r|x-y_\S|}\,d\upsigma (y_\S)\Big)^{1/2}\\
&\leq C\epsilon\|v\|_{L^\infty(\R)}\|g\|_{L^2(\S\times(-1,1))^4}\xi(x),
\end{split}
\end{equation}
where 
\begin{equation}
\xi(x):=\chi_{\Rt\setminus\Omega_{\eta_0}}(x)
\Big(\int_\Sigma e^{-r|x-y_\S|}\,d\upsigma (y_\S)\Big)^{1/2}.
\end{equation} 
Since $\xi\in L^2(\Rt)$ because $\upsigma(\S)<+\infty$, we deduce that
\begin{equation}\label{correc10}
\begin{split}
\|\chi_{\Rt\setminus\Omega_{\eta_0}}(A_\epsilon(a)g-A_0(a)\widetilde{g_\epsilon})\|_{L^2(\Rt)^4}
\leq C\epsilon\|v\|_{L^\infty(\R)}\|g\|_{L^2(\S\times(-1,1))^4}.
\end{split}
\end{equation}
On the other hand, by Proposition \ref{weingarten map} we have that 
\begin{equation}
\begin{split}
|\widetilde{g_\epsilon}(y_\S,s)-{g}(y_\S,s)|
=\big|\!\det(1-\epsilon s W(y_\S))-1\big| |g(y_\S ,s)|
\leq C\epsilon |g(y_\S ,s)|.
\end{split}
\end{equation}
This, together with the fact that $A_0(a)$ is bounded from $L^2(\Sigma\times(-1,1))^4$ to $L^2(\Rt)^4$ (see above \eqref{0002A000}), implies that
\begin{equation}\label{correc11}
\begin{split}
\|\chi_{\Rt\setminus\Omega_{\eta_0}}A_0(a)
(\widetilde{g_\epsilon}-g)\|_{L^2(\Rt)^4}
&\leq C\|v\|_{L^\infty(\R)} 
\|\widetilde{g_\epsilon}-g\|_{L^2(\Sigma\times(-1,1))^4}\\
&\leq C\epsilon\|v\|_{L^\infty(\R)}\|g\|_{L^2(\S\times(-1,1))^4}.
\end{split}
\end{equation}

Using the triangle inequality, \eqref{correc10} and \eqref{correc11}, we finally get that
\begin{equation}\label{A exponential_}
\begin{split}
\|\chi_{\Rt\setminus\Omega_{\eta_0}}(A_\epsilon(a)-A_0(a))g\|_{L^2(\Rt)^4}
\leq C\epsilon\|v\|_{L^\infty(\R)}\|g\|_{L^2(\S\times(-1,1))^4}
\end{split}
\end{equation}
for all $0\leq\epsilon\leq\frac{\eta_0}{2}$, where $C>0$ only depends on $a$ and $\eta_0$. In particular, this implies that 
\begin{equation}\label{A exponential}
\lim_{\epsilon\to0}\|\chi_{\Rt\setminus\Omega_{\eta_0}}(A_\epsilon(a)-A_0(a))
\|_{L^2(\Sigma\times(-1,1))^4\to L^2(\Rt)^4}=0.
\end{equation}

Let us deal now with $\chi_{\Omega_{\eta_0}}A_\epsilon(a)$. Consider the decomposition of $\phi^a$ given by \eqref{eqn:break phi}.
Then, as in \eqref{eqn:break phi2}, we write
\begin{equation}\label{0002A0}
\begin{split}
&A_\epsilon (a)=A_{\epsilon,\omega_1^a}+A_{\epsilon,\omega_2^a}+A_{\epsilon,\omega_3},\\
&A_0 (a)=A_{0,\omega_1^a}+A_{0,\omega_2^a}+A_{0,\omega_3}, 
\end{split}
\end{equation}
where $A_{\epsilon,\omega_1^a}$, $A_{\epsilon,\omega_2^a}$ and $A_{\epsilon,\omega_3}$ are defined as $A_\epsilon(a)$ but replacing $\phi^a$ by $\omega_1^a$, $\omega_2^a$ and $\omega_3$, respectively, and analogously for the case of $A_0(a)$. 
For $j=1,2$, the arguments used to show \eqref{0002} in the case of
$B_{\epsilon,\omega_j^a}$ also apply to $\chi_{\Omega_{\eta_0}}A_{\epsilon,\omega_j^a}$, thus we now get 
\begin{equation}\label{0002A}
\lim_{\epsilon\to 0}\|\chi_{\Omega_{\eta_0}}(A_{\epsilon,\omega_j^a}-A_{0,\omega_j^a})\|_{L^2(\Sigma\times(-1,1))^4\to 
L^2(\Rt)^4}=0\quad\text{for } j=1,2.
\end{equation}
It only remains to show the strong convergence of $\chi_{\Omega_{\eta_0}}A_{\epsilon,\omega_3}$. This case is treated similarly to what we did in Sections \ref{pointwise B}, \ref{meB} and \ref{cpB}, as follows.

\subsection{The pointwise limit of $A_{\epsilon,\omega_3}g(x)$ when $\epsilon\to0$ for $g\in L^2(\S\times(-1,1))^4$}\label{pointwise A}
\mbox{}

This case is much more easy than the one in Section \ref{pointwise B}.
Fixed $x\in\Rt\setminus{\Sigma}$, we can always find $\delta_x,C_x>0$ small enough such that 
\begin{equation}|x-y_\Sigma-\epsilon s \nu(y_\Sigma)|\geq C_x\quad\text{for all  $y_\Sigma\in \Sigma$, $s\in (-1,1)$ and $0\leq\epsilon\leq\delta_x$.}\end{equation}
In particular, fixed $x\in\Rt\setminus{\Sigma}$, $|\omega_3(x-y_\Sigma-\epsilon s \nu(y_\Sigma))|\leq C$ uniformly on $y_\Sigma\in \Sigma$, $s\in (-1,1)$ and $0\leq\epsilon\leq\delta_x$, where $C>0$ depends on $x$. By \Cref{weingarten map} and the dominated convergence theorem, given $g\in L^2(\S\times(-1,1))^4$, we have
\begin{equation}\label{0003A}
\lim_{\epsilon\to0}A_{\epsilon,\omega_3}g(x)
=A_{0,\omega_3}g(x)\quad\text{for $\LL$-a.e. }x\in\Rt,
\end{equation}
where $\LL$ denotes the Lebesgue measure in $\R^3$.

\subsection{A pointwise estimate of $\chi_{\Omega_{\eta_0}}(x)|
A_{\epsilon,\omega_3}g(x)|$ by maximal operators}\label{meA}
\mbox{}

Given $0\leq\epsilon\leq\frac{\eta_0}{4}$, we divide the study of $\chi_{\Omega_{\eta_0}}(x)A_{\epsilon,\omega_3}g(x)$ into two different cases, i.e. $x\in\Omega_{\eta_0}\setminus\Omega_{4\epsilon}$ and $x\in\Omega_{4\epsilon}$. As we did in Section \ref{meB},  we are going to work componentwise, that is, we consider $\C$-valued functions instead of $\C^4$-valued functions.
With this in mind, for $g\in L^2(\S\times(-1,1))$ we set
\begin{equation}
\begin{split}
\wita_\epsilon g(x):=\int_{-1}^1\int_\Sigma k(x-y_\S - \epsilon s \nu (y_\S))v(s) \det(1-\epsilon s W(y_\S)) g(y_\S ,s)\,d\upsigma (y_\S)\,ds,
\end{split}
\end{equation}
where $k$ is given by \eqref{CZ kernel2}.

In what follows, we can always assume that $x\in\Rt\setminus{\Sigma}$ because $\LL(\S)=0$. In case that $x\in\Omega_{4\epsilon}$, we can write $x=x_\S+\epsilon t\nu(x_\S)$ for some $t\in(-4,4)$, and then $\wita_\epsilon g(x)$ coincides with $\witb_\epsilon g(x_\S,t)$ (see \eqref{witb epsilon}) except for the term $u(t)$. Therefore, one can carry out all the arguments involved in the estimate of $\witb_\epsilon g(x_\S,t)$ (that is, from \eqref{witb epsilon} to \eqref{witb 13}) with minor modifications to get the following result:
define 
\begin{equation}\label{A strong convergence maximal estimate 1}
\wita_*g(x_\S,t):=\sup_{0<\epsilon\leq\eta_0/4}|\wita_\epsilon g(x_\S+\epsilon t\nu(x_\S))|\quad\text{ for $(x_\S,t)\in\S\times(-4,4)$}.
\end{equation} 
Then, if $\eta_0$ is small enough, there exists $C>0$ only depending on $\eta_0$ such that
\begin{equation}\label{A strong convergence maximal estimate}
\begin{split}
\big\|\sup_{|t|< 4}\wita_*g(\cdot,t)\big\|_{L^2(\S)}
\leq C\|v\|_{L^\infty(\R)}
\|g\|_{L^2(\S\times(-1,1))}\quad\text{for all $g\in L^2(\S\times(-1,1))$.}
\end{split}
\end{equation}

For the proof of \eqref{A strong convergence maximal estimate}, a remark is in order. The fact that in the present situation $t\in(-4,4)$ instead of $t\in(-1,1)$ (as in the definition of $\witb_\epsilon g(x_\S,t)$ in \eqref{witb epsilon}) only affects the arguments used to get \eqref{witb 12} at the comment just below \eqref{witb 9}. Now one should use that $\int_0^5|\log_2 r|^2\,dr<+\infty$ to prove the estimate analogous to \eqref{witb 9} and to  derive the counterpart of \eqref{witb 12}, that is,
\begin{equation}
\begin{split}
\wita_* g(x_\S,t)
&\leq C\|v\|_{L^\infty(\R)}
\big(\witm_*g(x_\S)+\witt_*g(x_\S)
+\witt_*(\lambda_1\lambda_2g)(x_\S)
+\witt_*(\lambda_1g)(x_\S)+\witt_*(\lambda_2g)(x_\S)\big)
\end{split}
\end{equation}
for all $(x_\S,t)\in\S\times(-4,4)$, being $\lambda_1$ and $\lambda_2$ the eigenvalues of the Weingarten map. Combining this estimate (whose right hand side is independent of $t\in(-4,4)$), the boundedness of $\witm_*$ and $\witt_*$ from $L^2(\Sigma\times(-1,1))$ to $L^2(\Sigma)$ (see \eqref{max hardy sio}) and Proposition \ref{weingarten map}, we get \eqref{A strong convergence maximal estimate}. 

Finally, thanks to \eqref{A strong convergence maximal estimate 1}, \eqref{eqn:coaera}, Proposition \ref{weingarten map} and \eqref{A strong convergence maximal estimate}, for $\eta_0$  small enough we conclude 
\begin{equation}\label{cecece 1}
\begin{split}
\big\|\sup_{0\leq\epsilon\leq\eta_0/4}\chi_{\Omega_{4\epsilon}}
|\wita_\epsilon g|\big\|_{L^2(\Rt)}
&\leq\big\|\sup_{|t|<4}\wita_*g(P_\S\cdot,t)\big\|_{L^2(\Omega_{\eta_0})}\\
&\leq C\big\|\sup_{|t|< 4}\wita_*g(\cdot,t)\big\|_{L^2(\S)}
\leq C\|v\|_{L^\infty(\R)}\|g\|_{L^2(\S\times(-1,1))}.
\end{split}
\end{equation}

We now focus on $\chi_{\Omega_{\eta_0}\setminus\Omega_{4\epsilon}}
\wita_\epsilon$ for $0\leq\epsilon\leq\frac{\eta_0}{4}$.
Similarly to what we did in \eqref{witb 1}, we set 
\begin{equation}
g_\epsilon(y_\S,s):=v(s)\det(1-\epsilon s W(y_\S)) g(y_\S ,s)\qquad\text{(see \eqref{witb 0bis})}
\end{equation} 
and we split 
$\wita_\epsilon g(x)=\wita_{\epsilon,1} g(x)+\wita_{\epsilon,2} g(x)
+\wita_{\epsilon,3} g(x)+\wita_{\epsilon,4} g(x)$, where
\begin{equation}
\begin{split}
&\wita_{\epsilon,1} g(x):=\int_{-1}^1\int_{\S} 
\big(k(x-y_\S - \epsilon s \nu (y_\S))-k(x-y_\S)\big)
g_\epsilon(y_\S,s)\,d\upsigma (y_\S)\,ds,\\
&\wita_{\epsilon,2} g(x):=\int_{-1}^1\int_{|x_\S-y_\S|\leq4\dt(x,\S)}
k(x-y_\S)g_\epsilon(y_\S,s)\,d\upsigma (y_\S)\,ds,\\
&\wita_{\epsilon,3} g(x):=\int_{-1}^1\int_{|x_\S-y_\S|>4\dt(x,\S)} 
\big(k(x-y_\S)-k(x_\S-y_\S)\big)g_\epsilon(y_\S,s)\,d\upsigma (y_\S)\,ds,\\
&\wita_{\epsilon,4} g(x):=\int_{-1}^1\int_{|x_\S-y_\S|>4\dt(x,\S)} 
k(x_\S-y_\S)g_\epsilon(y_\S,s)\,d\upsigma (y_\S)\,ds.
\end{split}
\end{equation}

From now on we assume $x\in\Omega_{\eta_0}\setminus\Omega_{4\epsilon}$ and, as always, $y_\S\in\S$. Note that
\begin{equation}|(y_\S - \epsilon s \nu (y_\S))-y_\S|\leq\epsilon
\leq\frac{1}{4}\,\dt(x,\S)\leq\frac{1}{4}\,|x-y_\S|,\end{equation}
so \eqref{Horm est} gives
$|k(x-y_\S - \epsilon s \nu (y_\S))-k(x-y_\S)|\leq C\epsilon{|x-y_\S|^{-3}}.$ 
Furthermore, we have that $|x-y_\S|\geq C|x_\S-y_\S|$ for all $y_\S\in\S$ and some $C>0$ only depending on $\eta_0$.
We can split the integral on $\S$ which defines  $\wita_{\epsilon,1} g(x)$ in dyadic annuli as we did in \eqref{witb 4} (see also \eqref{witb 11}) 
to obtain
\begin{equation}\label{dedede 1}
\begin{split}
|&\wita_{\epsilon,1} g(x)|
\leq 
C\int_{-1}^1\int_{|x_\S-y_\S|<\dt(x,\S)} 
\frac{\epsilon|g_\epsilon(y_\S,s)|}{\dt(x,\S)^3}\,d\upsigma (y_\S)\,ds\\
&\qquad\qquad\quad+C\int_{-1}^1\sum_{n=0}^\infty
\int_{2^{n}\dt(x,\S)<|x_\S-y_\S|\leq2^{n+1}\dt(x,\S)} 
\frac{\epsilon|g_\epsilon(y_\S,s)|}{|x-y_\S|^3}\,d\upsigma (y_\S)\,ds\\
&\,\,\leq C\|v\|_{L^\infty(\R)}
\witm_*g(x_\S)+
C\int_{-1}^1\sum_{n=0}^\infty\frac{1}{2^n}
\int_{|x_\S-y_\S|\leq2^{n+1}\dt(x,\S)} 
\frac{|g_\epsilon(y_\S,s)|}{(2^{n}\dt(x,\S))^2}\,d\upsigma (y_\S)\,ds\\
&\,\,\leq C\|v\|_{L^\infty(\R)}
\witm_*g(x_\S)
+C\sum_{n=0}^\infty\!\frac{1}{2^n}
\int_{-1}^1M_*(g_\epsilon(\cdot,s))(x_\S)\,ds
\leq C\|v\|_{L^\infty(\R)}
\witm_*g(x_\S).
\end{split}
\end{equation} 

Using that $|k(x-y_\S)|\leq C|x-y_\S|^{-2}\leq C\dt(x,\S)^{-2}$ by \eqref{Horm est}, it is easy to show that
\begin{equation}\label{dedede 2}
|\wita_{\epsilon,2} g(x)|\leq C\|v\|_{L^\infty(\R)}\witm_*g(x_\S).
\end{equation}
Since $\dt(x,\S)=|x-x_\S|$, the same arguments as in \eqref{dedede 1} yield
\begin{equation}\label{dedede 3}
|\wita_{\epsilon,3} g(x)|\leq C\|v\|_{L^\infty(\R)}\witm_*g(x_\S).
\end{equation}
Finally, the same arguments as in \eqref{witb 10} show that
\begin{equation}\label{dedede 4}
\begin{split}
|\wita_{\epsilon,4}g(x)|
\leq C\|v\|_{L^\infty(\R)}
\big(\witt_*g(x_\S)+\witt_*(\lambda_1\lambda_2g)(x_\S)
+\witt_*(\lambda_1g)(x_\S)+\witt_*(\lambda_2g)(x_\S)\big).
\end{split}
\end{equation}
Therefore, thanks to \eqref{dedede 1}, \eqref{dedede 2}, \eqref{dedede 3} and \eqref{dedede 4} we conclude that
\begin{equation}
\begin{split}
\sup_{0\leq\epsilon\leq{\eta_0/4}}
\chi_{\Omega_{\eta_0}\setminus\Omega_{4\epsilon}}(x)
|\wita_\epsilon g(x)|
&\leq C\|v\|_{L^\infty(\R)}
\big(\witm_*g(x_\S)+\witt_*g(x_\S)\\
&\quad+\witt_*(\lambda_1\lambda_2g)(x_\S)
+\witt_*(\lambda_1g)(x_\S)+\witt_*(\lambda_2g)(x_\S)\big),
\end{split}
\end{equation}
and then, similarly to what we did in \eqref{cecece 1}, a combination of \eqref{max hardy sio} and Proposition \ref{weingarten map} gives
\begin{equation}\label{cecece 2}
\begin{split}
\big\|\sup_{0\leq\epsilon\leq{\eta_0/4}}
\chi_{\Omega_{\eta_0}\setminus\Omega_{4\epsilon}}
|\wita_\epsilon g|\big\|_{L^2(\Rt)}
&\leq C\|v\|_{L^\infty(\R)}
\|g\|_{L^2(\S\times(-1,1))}.
\end{split}
\end{equation}

Finally, combining \eqref{cecece 1} and \eqref{cecece 2} we get that, if $\eta_0>0$ is small enough, then
\begin{equation}\label{wita **}
\begin{split}
\big\|\sup_{0\leq\epsilon\leq{\eta_0/4}}
\chi_{\Omega_{\eta_0}}|\wita_\epsilon g|\big\|_{L^2(\Rt)}
&\leq C\|v\|_{L^\infty(\R)}
\|g\|_{L^2(\S\times(-1,1))},
\end{split}
\end{equation}
where $C>0$ only depends on $\eta_0$.

\subsection{$A_{\epsilon,\omega_3}\to A_{0,\omega_3}$ in the strong sense when $\epsilon\to0$ and conclusion of the proof of \eqref{convergence A}}
\mbox{}

It only remains to put all the pieces together. Despite that the proof  follows more or less the same lines as the one in Section \ref{cpB}, in this case the things are easier. Namely, now we don't need to appeal to Lemma \ref{Calderon lemma} because the dominated convergence theorem suffices (the developements in Section \ref{pointwise A} hold for all $g\in L^2(\S\times(-1,1))^4$, not only for a dense subspace like in Section \ref{pointwise B}). 

Working component by component and using  \eqref{wita **} we see that, if we set
\begin{equation}A_{*,\omega_3}g(x):=
\sup_{0\leq\epsilon\leq{\eta_0/4}}
|A_{\epsilon,\omega_3}g(x)|\quad\text{ for $x\in\Rt\setminus\S$,}\end{equation}
then there exists $C>0$ only depending on $\eta_0>0$ (being $\eta_0$ small enough) such that
\begin{equation}\label{___}
\begin{split}
\|\chi_{\Omega_{\eta_0}}A_{*,\omega_3}g\|_{L^2(\Rt)^4}
\leq C\|v\|_{L^\infty(\R)}
\|g\|_{L^2(\S\times(-1,1))^4}.
\end{split}
\end{equation}

Moreover, given $g\in L^2(\S\times(-1,1))^4$, in \eqref{0003A} we showed that $\lim_{\epsilon\to 0}A_{\epsilon,\omega_3}g(x)=A_{0,\omega_3}g(x)$ for $\LL$-a.e. $x\in\Rt$. Thus \eqref{___} and the dominated convergence theorem show that
\begin{equation}\label{0002A00}
\lim_{\epsilon\to0}\|\chi_{\Omega_{\eta_0}}(A_{\epsilon,\omega_3}-A_{0,\omega_3})g\|_{L^2(\Rt)^4}=0.
\end{equation}
Then, combining \eqref{0002A000}, \eqref{0002A0}, \eqref{A exponential}, \eqref{0002A} and \eqref{0002A00}, we conclude that
\begin{equation}
\begin{split}
\lim_{\epsilon\to0}\|(A_\epsilon(a)-A_0(a))g\|_{L^2(\Rt)^4}^2
\leq\lim_{\epsilon\to0}\Big(&
\|\chi_{\Rt\setminus\Omega_{\eta_0}}(A_\epsilon(a)-A_0(a))g\|_{L^2(\Rt)^4}^2\\
&+\|\chi_{\Omega_{\eta_0}}(A_{\epsilon,\omega_1^a}-A_{0,\omega_1^a})g\|_{L^2(\Rt)^4}^2\\
&+\|\chi_{\Omega_{\eta_0}}(A_{\epsilon,\omega_2^a}-A_{0,\omega_2^a})g\|_{L^2(\Rt)^4}^2\\
&+\|\chi_{\Omega_{\eta_0}}(A_{\epsilon,\omega_3}-A_{0,\omega_3})g\|_{L^2(\Rt)^4}^2\Big)
=0
\end{split}
\end{equation}
for all $g\in L^2(\S\times(-1,1))^4$. This is precisely \eqref{convergence A}.

\section{Proof of Corollary \ref{convergence main}}\label{s proof corol}
We first prove an auxiliary result.

\begin{lemma}\label{REM}
Let $a\in\C\setminus\R$ and $\eta_0>0$ be such that \eqref{C^2 domain properties} holds for all $0<\epsilon\leq\eta_0$. If $\eta_0$ is small enough, then for any $0<\eta\leq\eta_0$ and $V\in L^\infty(\R)$ with $\supp V\subset[-\eta,\eta]$ we have that
\begin{equation}
\begin{split}
&\|A_\epsilon(a)\|_{L^2(\Sigma\times (-1,1))^4\to L^2(\Rt)^4},\\
&\|B_\epsilon(a)\|_{L^2(\Sigma\times(-1,1))^4\to L^2(\Sigma\times(-1,1))^4},\\
&\|C_\epsilon(a)\|_{L^2(\Rt)^4\to L^2(\Sigma\times (-1,1))^4}
\end{split}
\end{equation}
are uniformly bounded for all $0\leq\epsilon\leq\eta_0$, with  bounds that only depend on $a$, $\eta_0$ and $V$. 
Furthermore, if $\eta_0$ is small enough there exists $\delta>0$ only depending on $\eta_0$ such that
\begin{equation}\label{proof colo 4}
\|B_\epsilon(a)\|_{L^2(\Sigma\times(-1,1))^4\to L^2(\Sigma\times(-1,1))^4}\leq\frac{1}{3} 
\end{equation}
for all $|a|\leq1$, $0\leq\epsilon\leq{\eta_0}$, $0<\eta\leq{\eta_0}$ and all $(\delta,\eta)$-small $V$.
\end{lemma}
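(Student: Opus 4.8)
## Proof plan for Lemma \ref{REM}

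The plan is to exploit the operator identities in \eqref{correc2}, which express $A_\epsilon(a)$, $B_\epsilon(a)$ and $C_\epsilon(a)$ as compositions involving $\Phi^a(\cdot,0)=(H-a)^{-1}$, the multiplication operators $\ue$, $\ve$, and the isometries $\mathcal I_\epsilon$, $\mathcal S_\epsilon$. For the qualitative uniform boundedness, I would simply observe that $\mathcal S_\epsilon$ is unitary, $\mathcal I_\epsilon$ is bounded with norm controlled uniformly in $\epsilon$ thanks to the $C^2$ regularity of $\Sigma$ (this is exactly the content of the sentence after the definition of $\mathcal I_\epsilon$, which uses $\det(1-tW(x_\Sigma))$ bounded above and below as in \eqref{trace Sobolev 2}), $(H-a)^{-1}$ is bounded by $1/|\operatorname{Im}a|$, and $\|\ue\|_{L^\infty}=\|\ve\|_{L^\infty}=\|\Vep\|_{L^\infty}^{1/2}=\|V_\epsilon\|_{L^\infty(\R)}^{1/2}=(\tfrac{\eta}{\epsilon}\|V\|_{L^\infty})^{1/2}$. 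Wait — this last quantity is \emph{not} uniformly bounded in $\epsilon$; so the naive $L^\infty\to L^\infty$ estimate on the multiplication operators is useless. The correct route, and the one already in place in the paper, is: $A_\epsilon(a)$, $B_\epsilon(a)$, $C_\epsilon(a)$ have been shown (in Sections \ref{ss C}, \ref{ss B}, \ref{ss A}, via the Sobolev trace inequality \eqref{trace Sobolev}, the maximal-operator bounds \eqref{witb 13}, \eqref{wita **}, and the exponential-decay estimates \eqref{A exponential_}) to satisfy norm bounds of the form $C(a,\eta_0)\,\|u\|_{L^\infty(\R)}\|v\|_{L^\infty(\R)}$ (or $\|u\|_{L^\infty}$, resp. $\|v\|_{L^\infty}$, for $C_\epsilon$, $A_\epsilon$) uniformly in $0\le\epsilon\le\eta_0$. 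Since by \eqref{eq u,v} one has $\|u\|_{L^\infty(\R)}=\|v\|_{L^\infty(\R)}=\|\eta V(\eta\,\cdot)\|_{L^\infty(\R)}^{1/2}=(\eta\|V\|_{L^\infty(\R)})^{1/2}$, these bounds are \emph{finite} and depend only on $a$, $\eta_0$, $V$ — giving the first assertion directly.

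For the quantitative estimate \eqref{proof colo 4}, the key is that the constant in the $B_\epsilon(a)$ bound from Section \ref{ss B} has the shape $C(a,\eta_0)\|u\|_{L^\infty(\R)}\|v\|_{L^\infty(\R)}=C(a,\eta_0)\,\eta\|V\|_{L^\infty(\R)}$. When $|a|\le1$ the dependence on $a$ can be absorbed into a constant $C_0$ depending only on $\eta_0$ (the $a$-dependent constants in \eqref{Horm est*}, \eqref{eqn:break phi} arise from $\sqrt{m^2-a^2}$, which ranges over a compact set as $a$ ranges over $\{|a|\le1\}\setminus\R$, and the singular-kernel part $\omega_3$ is $a$-independent). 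Hence $\|B_\epsilon(a)\|\le C_0\,\eta\|V\|_{L^\infty(\R)}$ for all $0\le\epsilon\le\eta_0$, $0<\eta\le\eta_0$. If $V$ is $(\delta,\eta)$-small then $\eta\|V\|_{L^\infty(\R)}\le\delta$, so $\|B_\epsilon(a)\|\le C_0\delta$; choosing $\delta\le 1/(3C_0)$ (which depends only on $\eta_0$) gives \eqref{proof colo 4}.

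The main obstacle — or rather the main bookkeeping task — will be verifying carefully that the constant appearing in the $B_\epsilon(a)$ estimate really does factor as $(\text{constant depending only on }\eta_0)\times\|u\|_{L^\infty}\|v\|_{L^\infty}$ \emph{uniformly for $a$ in the disc $\{|a|\le1\}$}, not just for a fixed $a$. Concretely, one must revisit the three pieces of the decomposition $\phi^a=\omega_1^a+\omega_2^a+\omega_3$ in \eqref{eqn:break phi}: the $\omega_3$ part is handled by \eqref{witb 13}, whose constant is purely geometric (depending on $\eta_0$ through the maximal operators $\witm_*$, $\witt_*$ and the Weingarten bounds); the $\omega_1^a$ and $\omega_2^a$ parts give rise, via Lemma \ref{2d AD regularity}, to fractional integral operators on $\Sigma\times(-1,1)$ whose kernels are $O(|x|^{-1})$ near $0$ with constants that are continuous functions of $\sqrt{m^2-a^2}$ and hence uniformly bounded for $|a|\le1$. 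Collecting these into a single $C_0=C_0(\eta_0)$ and then invoking $(\delta,\eta)$-smallness completes the argument; I would present this as a short remark that the estimates already established in Sections \ref{ss B}--\ref{ss A} are, by inspection, of the claimed form, and then just track the constants.
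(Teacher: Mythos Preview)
Your proposal is correct and follows essentially the same route as the paper: the first assertion is obtained by pointing back to the uniform estimates already established in Sections~\ref{ss C}, \ref{ss B}, \ref{ss A} (specifically \eqref{unif estimate Cepsilon}, \eqref{remark eq1_} together with the fractional-type bounds from the paragraph of \eqref{0002}, and \eqref{A exponential_}, \eqref{0002A}, \eqref{___}), all of which have the shape $C(a,\eta_0)\|u\|_{L^\infty}\|v\|_{L^\infty}$ (or a single factor). For \eqref{proof colo 4} the paper does exactly what you outline: it splits $B_\epsilon(a)=B_{\epsilon,\omega_1^a}+B_{\epsilon,\omega_2^a}+B_{\epsilon,\omega_3}$, uses \eqref{remark eq1_} for the $a$-independent singular piece and the fractional-type bounds for $\omega_1^a,\omega_2^a$ (noting that their constants depend on $\sqrt{m^2-a^2}$ and hence are uniform for $|a|\le1$), collects these into $(C_0+2C_1)\|u\|_{L^\infty}\|v\|_{L^\infty}=(C_0+2C_1)\,\eta\|V\|_{L^\infty}\le(C_0+2C_1)\delta$, and chooses $\delta$ accordingly.
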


\begin{proof}
The first statement in the lemma comes as a byproduct of the developements carried out in Sections \ref{ss C}, \ref{ss B} and \ref{ss A};
see \eqref{unif estimate Cepsilon} for the case of $C_\epsilon(a)$, \eqref{remark eq1_} and the paragraph which contains \eqref{0002} for $B_\epsilon(a)$, and \eqref{A exponential_}, \eqref{0002A} and \eqref{___} for $A_\epsilon(a)$. 
We shoud stress that these developements are valid for any $V\in L^\infty(\R)$ with $\supp V\subset[-\eta,\eta]$, where $0<\eta\leq{\eta_0}$, hence the $(\delta,\eta)$-small assuption on $V$ in Theorem \ref{Main theorem} is only required to prove the explicit bound in the second part of the lemma, which will yield the strong convergence of $(1+B_\epsilon(a))^{-1}$ and $(\beta+B_\epsilon(a))^{-1}$  to $(1+B_0(a)+B')^{-1}$ and $(\beta+B_0(a)+B')^{-1}$, respectively, in Corollary \ref{convergence main}.

Recall the decomposition 
\begin{equation}\label{correc8}
B_\epsilon (a)=B_{\epsilon,\omega_1^a}+B_{\epsilon,\omega_2^a}+B_{\epsilon,\omega_3}
\end{equation} 
given by   \eqref{eqn:break phi2}. Thanks to \eqref{remark eq1_}, there exists $C_0>0$ only depending on $\eta_0$ such that
\begin{equation}\label{proof colo 1}
\begin{split}
\|B_{\epsilon,\omega_3}\|_{L^2(\S\times(-1,1))^4
\to L^2(\S\times(-1,1))^4}
\leq C_0\|u\|_{L^\infty(\R)}\|v\|_{L^\infty(\R)}
\quad\text{for all }0<\epsilon\leq\eta_0.
\end{split}
\end{equation}
The comments in the paragraph which contains \eqref{0002} and an inspection of the proof of \cite[Lemma 3.4]{approximation} show that 
there also exists $C_1>0$ only depending on $\eta_0$ such that, for any $|a|\leq1$ and $j=1,2$,
\begin{equation}\label{proof colo 2}
\begin{split}
\|B_{\epsilon,\omega_j^a}\|_{L^2(\S\times(-1,1))^4
\to L^2(\S\times(-1,1))^4}
\leq C_1\|u\|_{L^\infty(\R)}\|v\|_{L^\infty(\R)}
\quad\text{for all }0<\epsilon\leq\eta_0.
\end{split}
\end{equation}
Note that the kernel defining $B_{\epsilon,\omega_2^a}$ is given by 
\begin{equation}\omega_2^a(x)=\frac{e^{-\sqrt{m^2-a^2}|x|}-1}{4 \pi}\,i\alpha\cdot\frac{x}{|x|^3},\quad\text{so 
$|\omega_2^a(x)|=O\Big(\frac{\sqrt{|m^2-a^2|}}{|x|}\Big)$
for $|x|\to0$.}\end{equation}
Therefore, the kernel is of fractional type with respect to $\upsigma$, but the estimate blows up as $|a|\to\infty$. This is the reason why we restrict ourselves to $|a|\leq1$ in \eqref{proof colo 2}, where we have a uniform bound with respect to $a$. However, for proving Theorem \ref{Main theorem}, one fixed $a\in\C\setminus\R$ suffices, say $a=i$ (see \eqref{main eq*1} and \eqref{main eq*2}).

From \eqref{correc8}, \eqref{proof colo 1} and \eqref{proof colo 2}, we derive that
\begin{equation}\label{proof colo 3}
\begin{split}
\|B_{\epsilon}(a)\|_{L^2(\S\times(-1,1))^4
\to L^2(\S\times(-1,1))^4}
\leq (C_0+2C_1)\|u\|_{L^\infty(\R)}\|v\|_{L^\infty(\R)}
\quad\text{for all }0<\epsilon\leq\eta_0.
\end{split}
\end{equation}
If $V$ is $(\delta,\eta)$-small (see Definition \ref{deltasmall}) then 
$\|V\|_{L^\infty(\R)}\leq\frac{\delta}{\eta}$, so \eqref{eq u,v} yields
\begin{equation}
\|u\|_{L^\infty(\R)}\|v\|_{L^\infty(\R)}
=\eta\|V\|_{L^\infty(\R)}\leq\delta.
\end{equation} 
Taking $\delta>0$ small enough so that 
$(C_0+2C_1)\delta\leq\frac{1}{3}$, from \eqref{proof colo 3} we finally get \eqref{proof colo 4} for all $0<\epsilon\leq\eta_0$. The case of $B_0(a)$ follows similarly, just recall the paragraph previous to \eqref{witb 0bis} taking into account that the dependence of the norm of $B_0(a)$ with respect to $\|u\|_{L^\infty(\R)}\|v\|_{L^\infty(\R)}$ is the same as in the case of $0<\epsilon\leq\eta_0$.
\end{proof}

\subsection{Proof of Corollary \ref{convergence main}}
\mbox{}

We are going to prove the corollary for $(H+\Vep-a)^{-1}$, the case of $(H+\beta\Vep-a)^{-1}$ follows by the same arguments. Let $\eta_0,\,\delta>0$ be as in Lemma \ref{REM} and take $a\in\C\setminus\R$ with $|a|\leq 1$. It is trivial to show that 
\begin{equation}\|B'\|_{L^2(\Sigma\times(-1,1))^4\to L^2(\Sigma\times(-1,1))^4}
\leq C\|u\|_{L^\infty(\R)}\|v\|_{L^\infty(\R)}\end{equation}
for some $C>0$ only depending on $\S$. 
Using \eqref{eq u,v}, we can take a smaller $\delta>0$ so that, for any  $(\delta,\eta)$-small $V$ with $0<\eta\leq\eta_0$, 
\begin{equation}
\|B'\|_{L^2(\Sigma\times(-1,1))^4\to L^2(\Sigma\times(-1,1))^4}\leq C\delta\leq\frac{1}{3}.
\end{equation}
Then, from this and \eqref{proof colo 4} in Lemma \ref{REM} (with $\epsilon=0$) we deduce that 
\begin{equation}
\begin{split}
\|(1+B_0(a)+B')g\|_{L^2(\Sigma\times(-1,1))^4}
&\geq\|g\|_{L^2(\Sigma\times(-1,1))^4}
-\|(B_0(a)+B')g\|_{L^2(\Sigma\times(-1,1))^4}\\
&\geq\frac{1}{3}\|g\|_{L^2(\Sigma\times(-1,1))^4}
\end{split}
\end{equation}
for all $g\in L^2(\Sigma\times(-1,1))^4$. Therefore, $1+B_0(a)+B'$ is invertible and 
\begin{equation}\label{opop eq1}
\|(1+B_0(a)+B')^{-1}\|_{L^2(\Sigma\times(-1,1))^4\to L^2(\Sigma\times(-1,1))^4}\leq 3.
\end{equation}
This justifies the last comment in the corollary.
Similar considerations also apply to $1+B_\epsilon(a)$, so in this case we deduce that
\begin{equation}\label{opop eq2}
\|(1+B_\epsilon(a))^{-1}\|_{L^2(\Sigma\times(-1,1))^4\to L^2(\Sigma\times(-1,1))^4}\leq \frac{3}{2}
\end{equation}
for all $0<\epsilon\leq\eta_0$.
Note also that
\begin{equation}\label{opop eq3}
\begin{split}
(1+B_\epsilon(a))^{-1}-(1&+B_0(a)+B')^{-1}\\
&=(1+B_\epsilon(a))^{-1}(B_0(a)+B'-B_\epsilon(a))
(1+B_0(a)+B')^{-1}.
\end{split}
\end{equation}

Given $g\in L^2(\Sigma\times(-1,1))^4$, set 
$f=(1+B_0(a)+B')^{-1}g\in L^2(\Sigma\times(-1,1))^4$. Then,
by \eqref{opop eq3} and \eqref{opop eq2}, we see that 
\begin{equation}\label{opop eq4}
\begin{split}
\big\|\big((1+B_\epsilon(a))^{-1}-(1+&B_0(a)+B')^{-1}\big)g\big\|_{L^2(\Sigma\times(-1,1))^4}\\
&= \|(1+B_\epsilon(a))^{-1}(B_0(a)+B'-B_\epsilon(a))
f\|_{L^2(\Sigma\times(-1,1))^4}\\
&\leq \frac{3}{2}\,\|(B_0(a)+B'-B_\epsilon(a))
f\|_{L^2(\Sigma\times(-1,1))^4}.
\end{split}
\end{equation}
By \eqref{conv B th} in Theorem \ref{conv AB th}, the right hand side of \eqref{opop eq4} converges to zero when $\epsilon\to0$. Therefore, we deduce that $(1+B_\epsilon(a))^{-1}$ converges strongly to $(1+B_0(a)+B')^{-1}$ when $\epsilon\to0$. Since the composition of strongly convergent operators is strongly convergent, using \eqref{resolvent formula 2} and Theorem \ref{conv AB th}, we finally obtain the desired strong convergence 
\begin{equation}(H+\Vep-a)^{-1}\to 
(H-a)^{-1}+A_0(a)\big(1+B_0(a)+B'\big)^{-1}C_0(a)\quad\text{when }\epsilon\to0.\end{equation} Corollary \ref{convergence main} is finally proved.

\printbibliography
\end{document}